\newtheorem{theorem}{Theorem}
\newtheorem{remark}[theorem]{Remark} 
\newtheorem{corollary}[theorem]{Corollary}
\newtheorem{lemma}[theorem]{Lemma}
\newtheorem{question}{Question}
\newtheorem{proposition}[theorem]{Proposition}
\begin{document}
\def\F{{\mathbb F}}
\title{ From  braces to pre-Lie rings}
\author{ Aner Shalev,  Agata  Smoktunowicz}
\date{}
This arXiv submission was split into two parts. The second part begins at page 17 and the  abstract of the second part appears on page 17.

\maketitle
\begin{abstract} 

Let $A$ be a brace of cardinality $p^{n}$ where $p>n+1$ is prime and let $ann (p^{2})$ be the set of elements of additive order at most $p^{2}$ in this brace. We construct a pre-Lie ring  related to the brace $A/ann(p^{2})$.

In the case of strongly nilpotent braces of nilpotency index $k<p$ the brace $A/ann(p^{2})$ can be recovered by applying the construction of the group of flows to the resulting pre-Lie ring.
We do not know whether or not our construction is related to the group of flows when applied to braces which are not right nilpotent.
\end{abstract}

\section{Introduction}

Let $p$ be a prime.
In \cite{Shalev} finite analogs of Lazard's $p$-adic Lie rings of $p$-adic Lie groups were constructed.
We apply analogous methods  to the multiplicative groups of braces, and combine them with methods from \cite{Rump, passage2}.  

Braces were introduced in 2007 by Wolfgang Rump \cite{rump}, and they are a generalisation of Jacobson radical rings, with the two-sided braces being exactly the Jacobson radical rings.

It is an open question whether, for $p > n+1$, there is a one-to-one correspondence between left  nilpotent pre-Lie rings of cardinality $p^{n}$ and braces
of the same cardinality (see Question $1$ in \cite{passage2} and Question 20.92 in \cite{kurovka}). Such correspondence holds for right nilpotent braces for sufficiently large $p$, but it is
not known whether each not right nilpotent brace of cardinality $p^{n}$, where $p>n+1$, corresponds to a pre-Lie algebra \cite{kurovka, passage2}.

Note that an affirmative answer to this question would yield an extension of the classical Lazard correspondence between $p$-adic Lie groups and $p$-adic Lie rings to the correspondence between braces and pre-Lie rings. This would make it possible for braces of this cardinality to be investigated by pre-Lie and Lie algebra researchers who do not have experience with braces.  

One of the main motivations for investigating braces is their connections with set-theoretic solutions of the Yang-Baxter equation \cite{rump, cjo} (see also \cite{pent, doikou, doikou2} for some connections with mathematical physics); another motivation is the connections of braces with homological group theory, since braces are precisely bijective  1-cocycles of groups.

The theory of braces is also connected to algebraic number theory and its generalisations through the concept of Hopf-Galois extensions of abelian type \cite{Bachiller}. It was shown by Gateva-Ivanova in \cite{gateva} that braces are in one-to-one correspondence with involutive braided groups, a structure which is used to investigate set-theoretic solutions of the Yang-Baxter equation since 1999.   
On the other hand, connections between braces and pre-Lie algebras make it possible to find connections between braces and symmetric brace algebras \cite{bracealgebras}, previously unrelated concepts.

The origins of connections between braces and pre-Lie algebras go back to the fundamental paper by Rump \cite{Rump}. It was he who noticed that pre-Lie $k$-algebras and pre-Lie $k$-braces are analogous concepts, and he described a bijective correspondence between these two classes of objects for $k=\mathbb R$ (with some additional requirements) \cite{Rump}. Although it was not explicitly stated in the paper, one direction of this correspondence is obtained by taking the group of flows of the pre-Lie algebra. Furthermore on page 141 of \cite{Rump} Rump suggests that similar techniques could be used to obtain a passage from pre-Lie $\mathbb F_{p}$-algebras to $\mathbb F_{p}$-braces using Lazard's correspondence, where ${\mathbb F}_{p}$ is the unique field of order $p$ for some prime $p$. Such a correspondence was formally obtained in \cite{Lazard} and generalised in \cite{passage2}.  In \cite{Lazard, passage2} a new operation $\cdot $ was defined on a left and right nilpotent brace $A$ such that $(A, +,\cdot)$ is a left and right nilpotent pre-Lie ring. 

 %passes from finite pre-Lie algebras to finite braces. He used the exponential function on a pre-Lie algebra to define a %multiplication operation and showed that the obtained structure, together with the same addition as in the pre-Lie algebra, is a %brace.

%This construction can also be described using the group of flows developed in \cite{AG}; for more details, see \cite{passage}.

 Any brace of prime power order is left nilpotent \cite{rump}. For this reason the left nilpotency of pre-Lie ring is a natural assumption that one can not expect to drop; however, it is an open question as to whether or not the assumption of right nilpotency can be dropped. Namely, 
 it is an open question whether every brace of cardinality $p^{n}$ where $p>n+1$ is a prime number
can be obtained from some pre-Lie algebra by using the group of flows. The affirmative answer would imply a one-to-one correspondence between braces and pre-Lie rings of such cardinality.
% This is known to be true for right nilpotent braces for sufficiently large $p$ \cite{passage}, and it is also  known to be true for $%\mathbb R$-braces \cite{Rump}, where for $\mathbb R$-braces the correspondence is local. 
%In the case of $\mathbb R$-braces Rump used the differential and methods of algebraic geometry such as affine torsors, to pass %from $\mathbb R$-braces to pre-Lie rings.

 In the case of finite braces, Rump, on page 141 of \cite{Rump}, suggested using the Lazard correspondence and  to use the  $1$-cocycle obtained from the brace to obtain the 1-cocycle in the Lie ring. It is then known that Lie rings with bijective  1-cocycles are pre-Lie rings.  

However there are complications, since the additive group of a Lie ring and the additive group of the brace may not be identical in the case where the adjoint group of the brace is obtained by using the Lazard correspondence from this Lie ring. 

%Note that the Lazard Lie ring of the multiplicative group of braces which are obtained from a pre-Lie ring as in \cite{Rump} is %isomorphic to the Lie-ring whose additive group coincides with that of the  original brace.  This makes it possible to apply %Rump's suggested method. However, it is not known whether for any brace there is a Lie ring isomorphic to the Lazard Lie ring %whose additive group is the same as the additive group of the original brace.
 
In this paper we construct a pre-Lie ring associated to the brace $A/ann(p^{2})$. This pre-Lie ring is then used in the subsequent paper \cite{newSmok} to show that, if $A$ is a brace of cardinality $p^{n}$ with $p>n+1$, then the brace  $A/ann(p^{4})$ can be obtained as in \cite{Rump} (i.e., as the group of flows) from some left nilpotent pre-Lie ring with the same additive group as the brace $A/ann(p^{4})$. This gives an approximation, up to elements which have additive order $p^4$, to the above questions.
%Recall that the Lazard correspondence gives rise to a correspondence between $p$-groups %of nilpotency class less than $p$ and nilpotent Lie rings of the same class and order.
%The main tools used to achieve this are the Baker-Campbell-Hausdorff formula and its %inverse formulae. In \cite{AG} the  construction of the group of flows of a pre-Lie algebra %was developed, allowing, under some assumptions, a passage from pre-Lie algebras to %groups.  
\section{Background information}
Recall that a   {\em pre-Lie ring} $(A, +, \cdot)$ is  an abelian group $(A,+)$ with a binary operation $(x, y) \rightarrow  x\cdot y$ such that 
\[(x\cdot y)\cdot z -x\cdot (y\cdot z) = (y\cdot x)\cdot z - y\cdot (x\cdot z)\]
and $(x+y)\cdot z=x\cdot z+y\cdot z, x\cdot (y+z)=x\cdot y+x\cdot z,$
 for every $x,y,z\in A$. We say that a pre-Lie ring $A$  is {\em  nilpotent} or {\em strongly nilpotent}   if for some $n\in \mathbb N$ all products of $n$ elements in $A$ are zero. We say that $A$ is {\em left nilpotent} if for some $n$, we have $a_{1}\cdot (a_{2}\cdot( a_{3}\cdot (\cdots  a_{n})\cdots ))=0$ for all  $a_{1}, a_{2}, \ldots , a_{n}\in A$.

Braces were introduced by Rump in 2007 \cite{rump} to describe all involutive, non-degenerate set-theoretic solutions of the Yang-Baxter equation.

Recall that a set $A$ with binary operations $+$ and $* $ is a {\em  left brace} if $(A, +)$ is an abelian group and the following  version of distributivity combined with associativity holds.
  \[(a+b+a*b)* c=a* c+b* c+a* (b* c), \space  a* (b+c)=a* b+a* c,\]
for all $a, b, c\in A$; moreover  $(A, \circ )$ is a group, where we define $a\circ b=a+b+a* b$.
In what follows we will use the definition in terms of the operation `$\circ $' presented in \cite{cjo} (see \cite{rump}
for the original definition): a set $A$ with binary operations of addition $+$ and multiplication $\circ $ is a brace if $(A, +)$ is an abelian group, $(A, \circ )$ is a group and for every $a,b,c\in A$
\[a\circ (b+c)+a=a\circ b+a\circ c.\]
  All braces in this paper are left braces, and we will just call them braces. 
Let $(A, +, \circ )$ be a brace.  Recall that $I\subseteq A$ is an ideal in $A$ if for
$i,j\in I$, $a\in A$ we have $i+j\in I, i-j\in I, i*a, a*i\in I$ where $a*b=a \circ b-a-b$.  It follows that $A/I$ is a well defined brace \cite{BaPhd}. We will denote by
$+, \circ$ the addition and the multiplication in  the brace $A/I$, using the same notation as for addition and multiplication in the brace $A$. Elements of the brace $A/I$ will be
denoted by $[a]_{I}$ where $[a]_{I}$ is a coset with $a\in A$ being a representative of this coset. Recall that $[a]_{I}=\{a+i:i\in I\}$ is a
subset of the brace $A$. Note also that $[a]_{I}+[b]_{I}=[a+b]_{I}$ and $[a]_{I}*[b]_{I}=[a*b]_{I}$.

Let $ann(p^{i})$ denote the subset of $A$ consisting of elements whose additive order is
 $p^{j}$ where $j\leq i$. Recall that if $A$ is a brace of cardinality $p^{n}$, where $p$ is a prime number
 larger than $n+1$, then $ann(p^{i})$ is an ideal in $A$ for every $i$ by Lemma $17$, \cite{passage} (Lemma $4$, \cite{cl}).

\section{ Some results which will be used later}

For the convenience of the reader, in this section we recall  some results  from other papers, which will be used later.

By a result of Rump \cite{rump}, for a prime number $p$,  every brace of order $p^{n}$ is left nilpotent.
Recall that Rump introduced {\em left nilpotent}  and  {\em right nilpotent}  braces and radical chains
$A^{i+1}$ and $A^{(i+1)}$  for a left brace $A$,  where inductively $A^{i}$ consists of sums of elements $a*b$ with $a\in A, b\in A^{i-1}$, and $A^{(i)}$ consists of sums of elements $a*b$ with $a\in A^{(i-1)}, b\in A$, and 
$A=A^{1}=A^{(1)}$. A left brace $A$  is left nilpotent if  there is a number $n$ such that $A^{n}=0$. A left brace $A$ is right nilpotent  if  there is a number $n$ such that $A^{(n)}=0$.

We recall the definition of a strongly nilpotent brace (defined in \cite{Engel}).
 Define $A^{[1]}=A$ and let $A^{[i]}$ consist of sums of elements $a*b$, with
$a\in A^{[j]}, b\in A^{[i-j]}$ for all $i>j>0$. A left brace $A$ is {\em strongly nilpotent}  if  there is a number $n$ such that $A^{[n]}=0$.  
We recall Lemma 4.1 from \cite{Engel}:

\begin{lemma}\label{citeEngel}
Let $(A, +, \circ)$ be a left brace satisfying $A^{s}=0$ for some positive 
integer $s$. Let $a, b\in A$, and as usual define $a*b=a\circ b-a-b$.
Define inductively elements $d_{i}=d_{i}(a,b), d_{i}'=d_{i}'(a, b)$  as follows:
$d_{0}=a$, $d_{0}'=b$, and for $i \geq 0$ define $d_{i+1}=d_{i}+d_{i}'$ and $d_{i+1}'=d_{i}*d_{i}'$.
Then for every $c\in A$ we have
\[(a+b)*c=a*c+b*c+\sum _{i=0}^{2s} (-1)^{i+1}((d_{i}*d_{i}')*c-d_{i}*(d_{i}'*c)).\]
\end{lemma}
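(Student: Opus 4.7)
The plan is to rewrite the claimed identity in the telescoping form
\[\beta(a,b) = \sum_{i=0}^{2s}(-1)^{i+1}\bigl((d_i*d_i')*c - d_i*(d_i'*c)\bigr),\]
where, with $c$ fixed throughout, I introduce the shorthand $\beta(x,y) := (x+y)*c - x*c - y*c$ for the defect of $*$ from being additive in its left argument. The entire proof then splits into (a) producing a one-step recursion relating $\beta(d_i,d_i')$ and $\beta(d_{i+1},d_{i+1}')$, and (b) showing that the recursion terminates after finitely many iterations thanks to the hypothesis $A^{s}=0$.

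For (a), the only algebraic input is brace distributivity, rewritten as $(x+y+x*y)*z = x*z + y*z + x*(y*z)$. Applied with $x = d_i$, $y = d_i'$, $z = c$, the left side equals $(d_{i+1}+d_{i+1}')*c$, since by construction $d_{i+1} = d_i + d_i'$ and $d_{i+1}' = d_i * d_i'$. Hence
\[(d_{i+1}+d_{i+1}')*c = d_i*c + d_i'*c + d_i*(d_i'*c).\]
Subtracting $d_{i+1}*c + d_{i+1}'*c$ from both sides, recognizing $\beta(d_{i+1},d_{i+1}')$ on the left, and using $d_{i+1}'*c = (d_i*d_i')*c$ together with $d_{i+1}*c - d_i*c - d_i'*c = \beta(d_i,d_i')$, a rearrangement produces
\[\beta(d_i,d_i') = -\bigl((d_i*d_i')*c - d_i*(d_i'*c)\bigr) - \beta(d_{i+1},d_{i+1}').\]

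For (b), iterating this recursion from $i=0$ to $i=2s$ yields
\[\beta(a,b) = \sum_{i=0}^{2s}(-1)^{i+1}\bigl((d_i*d_i')*c - d_i*(d_i'*c)\bigr) + (-1)^{2s+1}\beta(d_{2s+1},d_{2s+1}').\]
A short induction on $i$ using the defining recursion of $d_i'$ shows that $d_i' \in A^{i+1}$: the base case is $d_0' = b \in A = A^1$, and the step is $d_{i+1}' = d_i * d_i' \in A * A^{i+1} \subseteq A^{i+2}$. The hypothesis $A^s = 0$ then forces $d_i' = 0$ for every $i \geq s-1$, so $\beta(d_{2s+1},d_{2s+1}') = \beta(d_{2s+1},0) = 0$ and every summand at an index $i \geq s-1$ is already zero, making the extended upper bound $2s$ harmless. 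Rearranging $\beta(a,b) = (a+b)*c - a*c - b*c$ gives the claimed formula. The principal bookkeeping challenge is matching the brace axiom correctly to the pair $(d_i, d_i')$ so as to identify $(d_{i+1}+d_{i+1}')$ with $d_i + d_i' + d_i * d_i'$; once that is done, everything else is routine telescoping.
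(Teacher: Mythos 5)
Your proof is correct. Note that the paper itself gives no proof of this statement: it is quoted verbatim as Lemma 4.1 of the cited reference [Engel], and your telescoping argument --- applying the brace axiom $(x+y+x*y)*c = x*c+y*c+x*(y*c)$ to the pair $(d_i,d_i')$, recognizing $d_{i+1}+d_{i+1}'$ on the left, and killing the remainder term via $d_i'\in A^{i+1}$ and $A^s=0$ --- is essentially the standard one used there. The only points left implicit are the trivial identities $0*c=0$ and $x*0=0$ (which follow since $0$ is the identity of $(A,\circ)$), needed to see that the tail $\beta(d_{2s+1},d_{2s+1}')$ and the summands with $i\ge s-1$ vanish; these are routine and do not affect the validity of the argument.
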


For a brace $A$, an element $a\in A$ and a natural number $n$, let $a^{\circ n}=a\circ \cdots \circ a$ denote the product of $n$ copies of $a$ under the operation $\circ $.
 We recall Lemma 14 from \cite{note}:
\begin{lemma}\label{citenote}
  Let $A$ be a left brace, let $a, b\in A$ and let $n$ be a positive integer. Then,
  $a^{\circ n} *b =  \sum_{i=1}^{n}{n\choose i}e_{i},$ where
  $e_{1} = a*b$ 
 and for each $i$,  $e_{i+1} =a*e_{i}$.
   Moreover, $a^{\circ n} = \sum_{i=1}^{n}{n\choose i}a_{i},$ where
  $a_{1} = a$ 
 and for each $i$,  $a_{i+1} =a*a_{i}$.
\end{lemma}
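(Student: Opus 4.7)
The plan is to prove both statements by induction on $n$, using as the main engine the brace identity $(a\circ b)*c = a*c + b*c + a*(b*c)$ together with left distributivity $a*(x+y) = a*x + a*y$.

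For the first identity, the base case $n=1$ is immediate since $a^{\circ 1}*b = a*b = e_1$. For the inductive step, I would write $a^{\circ(n+1)} = a\circ a^{\circ n}$ and apply the brace identity to obtain
\[ a^{\circ(n+1)}*b = (a\circ a^{\circ n})*b = a*b + a^{\circ n}*b + a*(a^{\circ n}*b). \]
Substituting the inductive hypothesis $a^{\circ n}*b = \sum_{i=1}^n \binom{n}{i}e_i$, using left distributivity of $*$ over $+$, and the recurrence $e_{i+1}=a*e_i$, this becomes
\[ e_1 + \sum_{i=1}^n \binom{n}{i}e_i + \sum_{i=1}^n \binom{n}{i}e_{i+1}. \]
Reindexing the last sum and collecting like terms, Pascal's identity $\binom{n}{i}+\binom{n}{i-1}=\binom{n+1}{i}$ together with $1+\binom{n}{1}=\binom{n+1}{1}$ and $\binom{n}{n}=\binom{n+1}{n+1}$ produces exactly $\sum_{i=1}^{n+1}\binom{n+1}{i}e_i$, completing the step.

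The second identity is proved by an entirely parallel induction. For the base case $n=1$, $a^{\circ 1}=a=a_1$. In the inductive step one writes $a^{\circ(n+1)} = a\circ a^{\circ n} = a + a^{\circ n} + a*a^{\circ n}$, substitutes the inductive formula, distributes $a*(-)$ over the sum, uses $a_{i+1}=a*a_i$, and then applies the same Pascal's identity bookkeeping as before.

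I do not anticipate any genuine obstacle: the argument is a direct induction whose only subtlety is the binomial coefficient bookkeeping in the inductive step. The one thing to be careful about is verifying that the reindexing and the application of Pascal's identity correctly absorb the boundary terms (the contribution $e_1$ or $a$ coming from outside the sum, and the top term $e_{n+1}$ or $a_{n+1}$ created by the shifted sum), so that the combined expression becomes a clean sum from $i=1$ to $n+1$.
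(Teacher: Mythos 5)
Your proof is correct: the induction on $n$ via $a^{\circ(n+1)}=a\circ a^{\circ n}$, the brace identity $(a\circ b)*c=a*c+b*c+a*(b*c)$, left distributivity of $*$ (which in a left brace does extend to integer multiples), and Pascal's identity is exactly the standard argument, and the boundary terms $1+\binom{n}{1}=\binom{n+1}{1}$ and $\binom{n}{n}=\binom{n+1}{n+1}$ are handled correctly. The paper itself gives no proof here --- it simply recalls this as Lemma 14 of the cited earlier work --- so there is nothing to compare against beyond noting that your induction is the expected one.
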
  

 For a brace $A$, denote by $A^{\circ p^{i}}$  the subgroup of $(A, \circ )$ generated by the elements $a^{\circ p^{i}}$, where $a\in A$. 

\begin{remark}\label{555} A $p$-group is said to be regular if $( xy) ^{p} = x^{p}y^{p}z$, where $z$ is an element of the commutator subgroup of the subgroup generated by $x$ and $y$.  If  $G$ is a group of cardinality $p^{n}$ where $p$ is a prime number larger than $n$  then $G$ is a regular $p$-group (see \cite{Janko} { \it \&7. regular $p$-groups, page 98}). By Theorem 7.2 (c) from \cite{Janko} this implies that 
  \[A^{\circ p^{i}}=\{a^{\circ p_{i}}: \ \  a\in A\},\] 
provided that $(A, +, \circ )$ is a brace of cardinality $p^{n}$ where $p>n+1$ is a prime number.
  \end{remark} 
For a brace $A$ and a natural number $m$ we denote $mA=\{ma:a\in A\}.$
We recall Proposition $15$ from \cite{passage} (Lemma 2, \cite{cl}) and Lemma $4$ from \cite{passage2}:
\begin{lemma}\label{citepassage}
Let $i,n$ be natural numbers.
Let $A$ be a brace of cardinality $p^{n}$ for some prime number $p>n+1$.
Then  $p^{i}A$ is an   ideal in $A$ for each $i$.
Moreover $A^{\circ p^{i}}=p^{i}A.$
\end{lemma}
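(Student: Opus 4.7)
Both claims rest on the power expansion
\[
  a^{\circ p^i} \;=\; \sum_{j=1}^{p^i}\binom{p^i}{j}\, a_j, \qquad a_1 = a,\ \ a_{k+1} = a * a_k,
\]
given by Lemma~\ref{citenote}, combined with Rump's result that $A$ is left nilpotent (so $A^{n+1}=0$, since the left radical chain $A \supsetneq A^2 \supsetneq \cdots$ strictly decreases in an abelian $p$-group of order $p^n$) and the arithmetic bound $p > n+1$.

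For the inclusion $A^{\circ p^i} \subseteq p^i A$, I would note that $a_j \in A^j$ vanishes for $j > n$, reducing the sum to $1 \le j \le n$. For these $j$ the bound $p > n+1$ forces $p \nmid j$, so Legendre's formula gives $v_p\binom{p^i}{j} = i - v_p(j) = i$, i.e.\ $p^i \mid \binom{p^i}{j}$. Hence every surviving summand lies in $p^i A$, and Remark~\ref{555} --- which identifies $A^{\circ p^i}$ with the set of $p^i$-th $\circ$-powers --- completes the inclusion.

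For the reverse inclusion, write $c_j = \binom{p^i}{j}/p^i \in \mathbb{Z}$, so the expansion reads $a^{\circ p^i} = p^i\, g(a)$ with $g(a) = a + \sum_{j=2}^n c_j a_j$. The map $g\colon A \to A$ is the identity modulo $A^2$, and more generally preserves each $A^j$ with $g(a) \equiv a \pmod{A^{j+1}}$ whenever $a \in A^j$ (because $a_k \in A^{j+k-1} \subseteq A^{j+1}$ for $k \ge 2$). A layer-by-layer inversion down the finite chain $A \supset A^2 \supset \cdots \supset A^{n+1} = 0$ then produces a two-sided inverse, so $g$ is a bijection; given $c \in A$, the element $a := g^{-1}(c)$ satisfies $a^{\circ p^i} = p^i g(a) = p^i c$, which combined with Step~1 yields $A^{\circ p^i} = p^i A$.

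Finally, with the equality in hand, $p^i A = A^{\circ p^i}$ is a characteristic (hence normal) subgroup of $(A, \circ)$, is visibly closed under $\pm$ in $(A,+)$, and satisfies $a * (p^i b) = p^i(a*b) \in p^i A$ by additivity of $a*-$; the remaining closure $(p^i b)*a \in p^i A$ follows from the identity $x*a = x\circ a - x - a$ together with normality of $p^i A$ in $(A, \circ)$ and its closure under $+$. The technical crux is the bijectivity of $g$ in the reverse-inclusion step: because $g$ mixes $+$ and $*$ nonlinearly, one cannot simply read off bijectivity from linear data, and one must make the iterative inversion along the left radical chain precise --- a step which exploits the finiteness $A^{n+1}=0$ forced by $|A|=p^n$.
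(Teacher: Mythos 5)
The paper does not prove this lemma at all --- it is imported verbatim from Proposition 15 of [passage], Lemma 2 of [cl] and Lemma 4 of [passage2] --- so your proposal has to stand on its own. Your forward inclusion $A^{\circ p^i}\subseteq p^iA$ is correct and standard ($a_j\in A^j$, $A^{n+1}=0$ from left nilpotency and $|A|=p^n$, $p^i\mid\binom{p^i}{j}$ for $1\le j\le n<p$, then Remark \ref{555}), and the deduction of the ideal property from the equality is fine. The gap is exactly at the point you yourself call the crux: the bijectivity of $g(a)=a+\sum_{j\ge 2}c_j a_j(a)$. Layer-by-layer inversion along a filtration works for maps that are additive (or homomorphic) and induce the identity on the graded pieces, but $g$ is not additive, and your filtration is the \emph{left} radical chain. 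If you try to improve an approximate preimage $x$ of $c$ by a correction $e\in A^{k+1}$, the discrepancy $g(x+e)-g(x)-e$ contains terms in which $e$ sits as a \emph{left} factor of $*$ (coming from $a_j(x+e)=(x+e)*a_{j-1}(x+e)$), and the left chain gives no control there: the defining containment is $A*A^{k}\subseteq A^{k+1}$, not $A^{k}*A\subseteq A^{k+1}$; the latter is governed by the right chain, which need not terminate for braces that are not right nilpotent --- precisely the braces this lemma must cover. So the iteration does not visibly converge, no two-sided inverse is produced, and the same obstruction blocks a direct injectivity argument. (This left/right asymmetry is the central difficulty of the whole paper; it is why Lemma \ref{co}, Corollary \ref{777} and the matrix argument are needed at all.)

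The reverse inclusion can be closed by counting rather than by inverting $g$. Since $p>n$, the group $(A,\circ)$ is regular, and for regular $p$-groups one has $|G^{p^i}|=|G:\Omega_i(G)|$ where $\Omega_i(G)=\{x:x^{p^i}=1\}$ (same source as Remark \ref{555}). Your expansion shows $\Omega_i=\{a: a^{\circ p^i}=0\}$ equals $ann(p^i)$: writing $a^{\circ p^i}=p^i\bigl(t'+\sum_{j\ge2}c_j a_j\bigr)$ is less useful than noting, via left additivity of $*$, that $p^i a_j=(a*)^{j-1}(t)$ with $t=p^ia$; hence $a^{\circ p^i}=0$ reads $t=-\sum_{j\ge2}c_j(a*)^{j-1}(t)$, and substituting this identity into itself repeatedly pushes $t$ into $A^m$ for every $m$, so $t=0$; the converse containment $ann(p^i)\subseteq\Omega_i$ is the same computation run forwards. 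Therefore $|A^{\circ p^i}|=|A|/|ann(p^i)|=|p^iA|$, which together with the inclusion you already proved yields the equality, after which your argument for the ideal property goes through unchanged.
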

Let $m$ be a natural number. An integer $\xi $  is a primitive root modulo $m$ if every integer coprime to $m$ is
congruent to a power of $\xi $ modulo $m$. It is known that
there exists a primitive root modulo $p^j$ for every $j$ and every odd prime number $p$ (see for example Theorem 6.11 \cite{numbertheory}).
 \begin{lemma}\label{Engelxi}
  Let $p>2$ be a prime number.
Let $\xi=\gamma ^{p^{p-1}}$, where $\gamma $ is a primitive root modulo $p^{p}$. Then $\xi ^{p-1}\equiv 1 \mod p^{p}$.  Moreover,  $ \xi ^{j}$ is not congruent to $1$ modulo $p$ for any natural number $0<j<p-1$.
\end{lemma}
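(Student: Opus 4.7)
The plan is to use elementary properties of primitive roots modulo $p^p$, namely that the multiplicative group $(\mathbb{Z}/p^p\mathbb{Z})^*$ is cyclic of order $\varphi(p^p) = p^{p-1}(p-1)$, so that the primitive root $\gamma$ has exact order $p^{p-1}(p-1)$ modulo $p^p$.

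First, I would compute the order of $\xi = \gamma^{p^{p-1}}$. Using the standard formula, this order equals
\[\frac{p^{p-1}(p-1)}{\gcd(p^{p-1},\,p^{p-1}(p-1))} = \frac{p^{p-1}(p-1)}{p^{p-1}} = p-1,\]
since $\gcd(p^{p-1}, p-1) = 1$. In particular $\xi^{p-1} \equiv 1 \pmod{p^p}$, which gives the first claim.

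For the second claim I would argue that the order of $\xi$ modulo $p$ is also exactly $p-1$, which immediately yields $\xi^j \not\equiv 1 \pmod p$ for $0<j<p-1$. The cleanest way to see this is to use the reduction map $\pi\colon (\mathbb{Z}/p^p\mathbb{Z})^* \to (\mathbb{Z}/p\mathbb{Z})^*$, whose kernel $K = 1 + p\mathbb{Z}/p^p\mathbb{Z}$ is a $p$-group of order $p^{p-1}$. Since $\xi$ has order $p-1$ in the domain, and $p-1$ is coprime to $|K| = p^{p-1}$, the cyclic subgroup $\langle\xi\rangle$ meets $K$ trivially, so $\pi$ is injective on $\langle \xi\rangle$ and the order of $\pi(\xi)$ is still $p-1$. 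Hence $\xi^j \equiv 1 \pmod{p}$ forces $(p-1) \mid j$.

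I do not anticipate a serious obstacle: the entire argument is a routine application of the structure of $(\mathbb{Z}/p^p\mathbb{Z})^*$. The only point that needs a brief justification is the coprimality of $p-1$ and $p^{p-1}$ used at two places, and the standard fact that the $1$-units in $(\mathbb{Z}/p^p\mathbb{Z})^*$ form a $p$-group, both of which are elementary.
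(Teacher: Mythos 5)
Your proposal is correct and complete. The paper actually states this lemma without proof (it only records beforehand, with a textbook citation, that primitive roots modulo $p^{j}$ exist for odd $p$), and your argument — computing that $\xi=\gamma^{p^{p-1}}$ has exact order $(p-1)$ in the cyclic group $(\mathbb{Z}/p^{p}\mathbb{Z})^{*}$ of order $p^{p-1}(p-1)$, then passing through the reduction map whose kernel is a $p$-group to see the order survives modulo $p$ — is precisely the standard justification the authors leave implicit. (A marginally shorter route for the second claim: $\xi\equiv\gamma \bmod p$ by Fermat, and the reduction of a primitive root modulo $p^{p}$ is a primitive root modulo $p$; but your coprimality argument is equally valid.)
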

\section{The Braces $pA$}
Assume that $B$ is a brace which is both left nilpotent and right nilpotent; then by a result from \cite{Engel} it is strongly nilpotent. In other words,  there is $k$ such that the product of any $k$ elements, in any order,  is zero (where all products are under the operation $*$).  If $B^{[k]}=0$ and $B^{[k-1]}\neq 0$, then we say that $B$ is  strongly nilpotent of degree $k$, and that $k$ is the nilpotency index of $B$.
% Let $A$ be a brace of cardinality $p^{n}$ where $p$ is a prime number larger than $n+1$. Denote $pA=\{pa:a\in A\}$ where %$pa$ is the sum of $p$ copies of $a$.
%In this section we investigate some  properties of  the braces $pA$.
\begin{proposition}\label{b} Let $n$ be a natural number.
Let $A$ be a  brace of cardinality $p^{n}$ where $p$ is a prime number larger than $n+1$. Then $pA$ is a brace, and the product
of any $p-1$ elements of $pA$ is zero.
Therefore, $pA$ is a strongly nilpotent brace of nilpotency index not exceeding $p-1$.
Moreover, every product of any $i$ elements from the brace $pA$ and any number of elements from $A$ belongs to $p^{i}A$.
Hence the product of any $p-1$ elements from the brace $pA$ and any number of elements from the brace $A$ is zero. Moreover, $p^{p-1}A=0$.
\end{proposition}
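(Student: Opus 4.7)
The plan is to combine Lemma~\ref{citepassage} (which says $p^{i}A$ is an ideal of $A$) with the brace identity $x*(u+v)=x*u+x*v$ to obtain the key multiplicative estimate
\[
p^{i}A \,*\, p^{j}A \;\subseteq\; p^{i+j}A \qquad (i,j \ge 0),
\]
from which everything else follows by an induction on a bracketed product, together with a cardinality count. The estimate itself is immediate: given $x\in p^{i}A$ and $y=p^{j}z\in p^{j}A$, iterating right distributivity yields $x*y = x*(p^{j}z) = p^{j}(x*z)$; since $p^{i}A$ is an ideal we have $x*z \in p^{i}A$, hence $x*y \in p^{j}(p^{i}A) = p^{i+j}A$.

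The fact that $pA$ is a brace is the statement that $pA$ is an ideal of $A$, which gives closure under $+$, $*$ and $\circ$ and, as $pA = A^{\circ p}$ is a subgroup of $(A,\circ)$, closure under $\circ$-inverses. For the nilpotency statements, I would argue by induction on the total number of leaves in a $*$-bracketed expression that any product of $i$ elements of $pA$ together with arbitrarily many elements of $A$ lies in $p^{i}A$: the single-leaf cases $p^{0}A=A$ and $pA$ are trivial, and the inductive step decomposes the product as $P_1*P_2$ with $i_1$ and $i_2$ marked $pA$-leaves respectively (so $i_1+i_2=i$), applies the induction to $P_1\in p^{i_1}A$ and $P_2\in p^{i_2}A$, and then invokes the key estimate.

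Finally, $(A,+)$ has order $p^{n}$, so its exponent divides $p^{n}$; and $p>n+1$ forces $n \le p-2$, whence $p^{p-1}A \subseteq p^{n}A = 0$. Specialising the induction above to $i=p-1$ then yields simultaneously that any product of $p-1$ elements of $pA$ is zero, that $pA$ is strongly nilpotent of nilpotency index at most $p-1$, and that the same conclusion holds when any number of additional factors from $A$ are inserted. The only substantive step is the multiplicative estimate; once it is in place the remainder is bookkeeping, so I do not anticipate a serious obstacle.
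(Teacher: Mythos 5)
Your proposal is correct and follows essentially the same route as the paper: the key estimate $p^{i}A * p^{j}A \subseteq p^{i+j}A$ is derived exactly as in the paper's proof (writing $b = p^{j}c$, using distributivity in the second argument and the ideal property of $p^{i}A$ from Lemma~\ref{citepassage}), and the conclusion follows from $p^{n}A = 0$ together with $n \le p-2$. Your explicit induction on the bracketed product merely spells out a step the paper leaves implicit, so there is nothing to add.
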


\begin{proof}Let $A$ be a brace of cardinality $p^{n}$ for $p>n+1$. Then $p^{i}A$ is an ideal in $A$ for every $i$ (by Lemma \ref{citepassage}).
Moreover, if $a\in p^{i}A, b\in p^{j}A$ then $a*b\in p^{i+j}A$, since if $b=p^{j}c$ then $a*p^{j}c=p^{j}(a*c)\in p^{j}(p^{i}A)=p^{i+j}A$.
It follows that $pA$ is a nilpotent brace, such that the product of any $n+1$ elements in this brace is in $p^{n}A$ and hence it is zero.
Notice that $p^{n}A=0$ since $(A, +)$ is a group of cardinality $p^{n}$, so the additive order of each element in $A$ divides $p^{n}$.
Since, by assumption, $p>n+1$, it follows that products of any $p-1$ elements in it is zero.
 \end{proof}

  We will now present two results which are related to Lemma \ref{citeEngel}, which will be used later in the proof of Proposition \ref{12345}. We first introduce a set $W$.

   Let $W$ denote the set of all non-associative words in non-commuting variables $X,Y, Z$ with any distribution of brackets; moreover, $Z$ appears only once at the very end of each word, and both  $X$ and $Y$ appear at least once in each word; furthermore, elements from the set $\{X,Y\}$ appear at least three times in each word. For example, $(XZ)(XY)\notin W$, while $((XY)X)Z\in W$.
 For $w\in W$ let  $w\langle a, b,  c\rangle $ denote a specialisation of the word $w$ for $X=a$, $Y=b$, $Z=c$ and the multiplication in $w\langle a, b, c\rangle $ is the same as the  operation $*$ in the brace $A$ (recall that $a*b=a \circ b-a-b$). So for example if $w=(((XX)X)Y)Z$ then $w\langle a,b, c\rangle =(((a*a)*a)*b)*c$. Let $w$ be a word in $X$ and $Z$, then $w\langle a, c\rangle $ is  the specialisation of the word $w$ for $X=a$,  $Z=c$.
  So for example if $w=X(X(XZ))$ then $w\langle a, c\rangle =a*(a*(a*c))$.
\begin{lemma}\label{co}
  Fix a prime number $p$. Let $W$ be as above. Then there are integers $\beta _{w}$ for $w\in W$, such that only a finite number of them is non zero and the following holds:
 For each brace $(A, +, \circ )$  of cardinality $p^{n}$ with  $n<p-1$ and for each $a, b\in pA$, $c\in A$
 we have
\[(a+b)*c=a*c+b*c+ a*(b*c)-(a*b)*c+\sum_{w\in W}\beta _{w}w\langle a, b,   c\rangle .\] 
\end{lemma}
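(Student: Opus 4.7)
The plan is to apply Lemma \ref{citeEngel} to $(a+b)*c$ and then to analyse the resulting terms. Choose any $s$ with $A^{s}=0$; such an $s$ exists because every brace of prime power order is left nilpotent. Lemma \ref{citeEngel} yields
\[(a+b)*c=a*c+b*c+\sum_{i=0}^{2s}(-1)^{i+1}\bigl((d_{i}*d_{i}')*c-d_{i}*(d_{i}'*c)\bigr).\]
The $i=0$ contribution is exactly $-(a*b)*c+a*(b*c)=a*(b*c)-(a*b)*c$, which matches the explicit terms appearing in the identity to be proved. So it suffices to show that the contributions from $i\geq 1$ collapse to a sum $\sum_{w\in W}\beta_{w}\, w\langle a,b,c\rangle$ with integer coefficients $\beta_{w}$ depending only on $p$.

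I would first establish a uniform truncation. By induction on $i$, $d_{i}\in pA$ and $d_{i}'\in p^{i+1}A$: indeed $d_{0}=a, d_{0}'=b\in pA$; and if $d_{i}\in pA$ and $d_{i}'\in p^{i+1}A$, then $d_{i+1}=d_{i}+d_{i}'\in pA$ and $d_{i+1}'=d_{i}*d_{i}'\in p\cdot p^{i+1}A=p^{i+2}A$ by the argument already used in the proof of Proposition \ref{b}. Since Proposition \ref{b} gives $p^{p-1}A=0$, we conclude $d_{i}'=0$ as soon as $i+1\geq p-1$, so only the indices $1\leq i\leq p-3$ contribute, a bound depending only on $p$ and not on the particular brace $A$.

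For each such $i$, I would then expand $(d_{i}*d_{i}')*c$ and $d_{i}*(d_{i}'*c)$ into integer linear combinations of specialisations $w\langle a,b,c\rangle$ of non-associative words $w$ in $X,Y,Z$. Sums that appear as second arguments of $*$ are eliminated by the brace axiom $x*(y+z)=x*y+x*z$, while sums that appear as first arguments of $*$ are eliminated by further applications of Lemma \ref{citeEngel}. Crucially, every auxiliary $d$-sequence produced in these inner reductions again consists of elements of $pA$, so the same bound $p-3$ applies at every level, the whole expansion terminates after a number of steps bounded in terms of $p$ only, and the integer coefficients that accumulate depend on $p$ alone. To check that the surviving words $w$ lie in $W$: the letter $Z$ (the variable $c$) enters only once and at the very end, because $c$ is introduced at each step only as a rightmost factor; and for $i\geq 1$ the expansion of $d_{i}'$ into $*$-monomials produces terms containing at least two letters from $\{X,Y\}$ each (starting from $d_{1}'=a*b$ and preserved under successive $*$-products), so $(d_{i}*d_{i}')*c$ and $d_{i}*(d_{i}'*c)$ contain at least three, and both $X$ and $Y$ individually occur since they already do in $d_{1}'=a*b$.

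The main obstacle is the bookkeeping in this last step: one must verify that the iterated application of Lemma \ref{citeEngel} produces absolute integer coefficients $\beta_{w}$ depending only on $p$, rather than coefficients that depend on the nilpotency index $s$ of the particular brace $A$. This is ensured by the uniform truncation at index $p-3$, which is inherited at every level of the recursion from Proposition \ref{b}, so that the entire reduction may be viewed as taking place in a universal nilpotent setting governed by $p$ alone.
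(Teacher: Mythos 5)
Your proposal is correct and follows essentially the same route as the paper: apply Lemma \ref{citeEngel}, peel off the $i=0$ term as $a*(b*c)-(a*b)*c$, and reduce the remaining terms to $W$-words by iterating the brace axiom and Lemma \ref{citeEngel}, with termination guaranteed by Proposition \ref{b}. Your explicit truncation via $d_{i}'\in p^{i+1}A$ is a welcome sharpening of the paper's terser termination argument, but it is the same underlying mechanism.
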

\begin{proof} By Lemma \ref{citeEngel}, we have the following formula which holds for any brace of cardinality $p^{n}$ with $n<p-1$: for every $a,b\in pA$ and $c\in A$ we have
\[(a+b)*c=a*c+b*c+\sum _{i=0}^{2(p-1)} (-1)^{i+1}((d_{i}*d_{i}')*c-d_{i}*(d_{i}'*c)),\]
 where
$d_{0}=a$, $d_{0}'=b$, and for $i \geq 1$ we have  $d_{i+1}=d_{i}+d_{i}'$ and $d_{i+1}'=d_{i}*d_{i}'$.
 By writing the first summand from our sum before the sum we get

\[(a+b)*c=a*c+b*c-(a*b)*c+a*(b*c)+\sum _{i=1}^{2(p-1)} (-1)^{i+1}((d_{i}*d_{i}')*c-d_{i}*(d_{i}'*c)).\]

We will only use this relation and the relations that products of any $p-1$ elements from the set $\{a,b\}$ and an element $c$ is zero and the relations $p^{p-1}a=0$ for each $a\in A$ (this holds by Proposition \ref{b}). We then apply these relations several times, so that on the right hand side we have a sum of some products of elements $a$, $b$ and $c$ (where $c$ appears only once at the end and $b$ and $a$ both appear in each product), and then we obtain the result which only depends on $p$, and which does not depend on $a,b$ or on the brace $A$. This process will terminate by the last assertion from Proposition \ref{b}.
\end{proof}

Next, we obtain the following corollary.

\begin{corollary}\label{777}
  Let $p$ be a prime number and let $m$ be a number. Let  $W'$ be the set of nonassociative words in variables $X, Z$ where $Z$ appears only once at the end of each word and $X$ appears at least twice in each word.
  Then there are integers $\gamma _{w}$  for $w\in W'$, such that only a finite number of them is non zero and the following holds:
 For each brace $(A, +, \circ )$  of cardinality $p^{n}$ with $p>n+1$ and for each $a\in pA$,  $c\in A$
 we have
\[(ma)*c=m(a*c)+\sum_{w\in W'}\gamma _{w}w\langle a,c\rangle .\] 
\end{corollary}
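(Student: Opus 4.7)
The plan is to proceed by induction on $m$, in the spirit of the proof of Lemma~\ref{co}: a single expansion identity is applied iteratively, and termination is forced by the nilpotency statement in Proposition~\ref{b}. The base case $m = 1$ is immediate with all $\gamma_w = 0$, since $(1\cdot a)*c = a*c = 1\cdot(a*c)$.

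For the inductive step from $m$ to $m+1$, I would write $(m+1)a = ma + a$ (both summands lying in $pA$) and apply Lemma~\ref{co} with $(a,b) \leftarrow (ma, a)$ to obtain
\[((m+1)a)*c = (ma)*c + a*c + (ma)*(a*c) - ((ma)*a)*c + \sum_{w \in W} \beta_w \, w\langle ma, a, c\rangle.\]
The term $(ma)*c$ unfolds by the inductive hypothesis into $m(a*c)$ plus a $W'$-combination in $a, c$; combined with $a*c$ this contributes the desired $(m+1)(a*c)$ plus further $W'$-words. The term $(ma)*(a*c)$ is handled by applying the inductive hypothesis with $c$ replaced by $a*c$; the substitution $Z \mapsto X*Z$ in any word of $W'$ adds an occurrence of $X$ and preserves the condition that $Z$ appears once at the end, so the output again lies in $W'$.

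The main obstacle will be the term $((ma)*a)*c$, because the failure of right-distributivity prevents pulling the scalar $m$ across the outer $*$. To handle it I would first apply the inductive hypothesis with $c \leftarrow a$ to write $(ma)*a = m(a*a) + u$, where $u$ is an integer combination of non-associative words in $a$ alone of length at least three, so $u \in p^{3}A \subseteq pA$. Substituting this equality inside the outer $*c$ gives $(m(a*a)+u)*c$, which I would expand by iterated application of Lemma~\ref{co}, peeling off one summand at a time against the remainder in $pA$. Inner factors of the form $m(a*a)*x$ are then reduced by the inductive hypothesis applied with $a \leftarrow a*a$, producing only $W'$-terms. The residual sum $\sum_w \beta_w \, w\langle ma, a, c\rangle$ is treated analogously: each occurrence of $ma$ nested inside a bracketing is replaced via the inductive hypothesis by $m \cdot a$ plus $W'$-corrections, which are in turn absorbed by further uses of Lemma~\ref{co}. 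Both iterations terminate by Proposition~\ref{b}, since any product of $p-1$ elements of $pA$ with any number of elements of $A$ vanishes, bounding the depth of the expansion. Collecting contributions yields $((m+1)a)*c = (m+1)(a*c) + \sum_{w \in W'} \gamma_w \, w\langle a, c\rangle$ with integer coefficients $\gamma_w$ depending only on $p$ and $m+1$, which completes the induction.
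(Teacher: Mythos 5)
Your proposal is correct and follows essentially the same route as the paper: induction on $m$, an application of Lemma~\ref{co} to the decomposition $(m+1)a = ma + a$, and then repeated use of the inductive hypothesis and of Lemma~\ref{co} to rewrite the resulting terms, with termination guaranteed by Proposition~\ref{b}. The only difference is presentational: the paper packages the claim that these iterated rewritings stay within the right class of expressions and terminate into a separate combinatorial statement (its ``Fact~1''), whereas you argue it term by term.
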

\begin{proof}
 We will proceed by induction on $m$. For $m=1$ the result is true. For
$m=2$ the result follows from Lemma \ref{co} applied for $b=a$.
Suppose that the result is true for some $m$, then by the inductive assumption
$(ma)*c=m(a*c)+\sum_{w\in W'}\gamma _{w}w\langle a,c\rangle .$

We will first prove a supporting Fact $1$.

{\bf Fact $1$}. We will show that $Y_{t}^{\alpha }\subseteq Y_{t}^{1}$ for each $t,\alpha $ where the notation for this is as follows. 
 Let $a_{1}, a_{2} \ldots , a_{l}$ be all elements from the set $ pA$ and  denote $X=\{a_{1}, a_{2}, \ldots , a_{l}\}$ (so  $X=pA$ as sets).
For a natural number $t$ let $X_{t}^{1}$ denote the set consisting of products, under the operation $*$,  of $t$ or more elements from the set $X$ and also possibly an element $c\in A$ at the end; and let $Y_{t}^{1}$ denote the set whose elements are (finite) sums of elements from $X_{t}^{1}$. Notice that $X_{t}^{1}, Y_{t}^{1}\subseteq p^{t}A$.

Define $X_{t}^{2}$ to be set consisting of elements $u$ defined as follows. 
Let $q_{1}, q_{2}, \ldots q_{s}$ be such that $c_{i}\in Y_{j_{i}}^{1}$ for each $i$, where $j_{1}, \ldots , j_{s}>0$ are some natural numbers.  Let $u$ be a product of elements $q_{1}, q_{2}, \ldots , q_{s}$,   in this order, with any distribution of brackets,
  and such that  $t\leq {j_{1}+j_{2}+\ldots +  j_{s}}$ (where $s$ is a natural number which can be different for diferent elements $u$) and also possibly element $c$ at the end. Moreover, we only consider $u$ in which element $c$ appears at most once, and if it appears then it appears at the end.
 Let $Y_{t}^{2}$ denote the set whose elements are (finite) sums of elements from $X_{t}^{2}$. 
 Notice that  $X_{t}^{2}, Y_{t}^{2}\subseteq p^{t}A$.

Continuing in this way we define $X_{t}^{\alpha }, Y_{t}^{\alpha }\subseteq p^{t}A$.
Notice that $X_{i}^{\alpha -1}\subseteq X_{i}^{\alpha }$ for each $\alpha $, since $X_{i}^{\alpha -1}\subseteq Y_{i}^{\alpha -1}\subseteq X_{i}^{\alpha }$ for each $\alpha $.

We will show that $Y_{t}^{\alpha +1}\subseteq Y_{t}^{\alpha }$, for each $\alpha ,t$,
by induction on $t$ in the reverse order. Notice that it suffices to show that   $X_{t}^{\alpha +1}\subseteq Y_{t}^{\alpha }.$

Observe that this is true  for $t\geq p$, since  $Y_{t}^{\alpha +1} \subseteq p^{ p}A=0.$  Suppose that  it holds for all numbers $t>j$ for some $j$. We need to show that $X_{j}^{\alpha +1}\subseteq Y_{j}^{\alpha },$ for each $\alpha $.
Let $u\in X_{j}^{\alpha +1}$. Then
$u$ is a product of  $q_{1}, q_{2}, \ldots q_{s},$  such that $q_{i}\in Y_{j_{i}}^{\alpha },$ for some integers $j_{1}, \ldots , j_{s}$, such that $j_{1}+\cdots +j_{s}\geq j$ (for some $s$).

Let $q_{i}=\sum_{k}d_{i,k}$ for some $d_{i,k}\in X_{j_{i}}^{\alpha }$. By applying Lemma \ref{co} several times we obtain that
$u$ equals a sum of all possible products of elements 
$d_{1,k_{1}}$, $d_{2,k_{2}}, \ldots , d_{s, k_{s}}$  (in this order and with any distribution of brackets) for various $k_{1}, \ldots ,k_{s}$ plus some element $q\in  Y_{j +1}^{\alpha +\gamma }$ for some sufficiently large $\gamma $.
Notice that $d_{i,k_{i}}\in X_{j_{i}}^{\alpha }$ for each $i$, so  any product of elements 
$d_{1,k_{1}}$, $d_{2,k_{2}}, \ldots , d_{s, k_{s}}$  (in this order and with any distribution of brackets) 
 is in $X_{j}^{\alpha }$.

We will now show that $X_{j}^{\alpha +1}\subseteq Y_{j}^{\alpha }$, for each $\alpha ,t$.
We know that
 $X_{j}^{\alpha +1}\subseteq Y_{j}^{\alpha }+Y_{j+1}^{\alpha +\gamma},$ for each $\alpha $ and for some $\gamma=\gamma (\alpha )$.

By the inductive induction on $j$, we have $Y_{j+1}^{\alpha +\gamma}\subseteq Y_{j+1}^{\alpha +\gamma -1}\subseteq Y_{j+1}^{\alpha +\gamma -2}\subseteq \cdots \subseteq Y_{j+1}^{\alpha }$.
Since $Y_{j+1}^{\alpha }\subseteq Y_{j}^{\alpha }$ we obtain $X_{j}^{\alpha +1}\subseteq Y_{j}^{\alpha }$ for each $\alpha $, as required.
This completes the inductive argument. Therefore, $X_{j}^{\alpha +1}\subseteq Y_{j}^{\alpha }$ for each $j,\alpha $.
Therefore, $Y_{t}^{\alpha }\subseteq Y_{t}^{\alpha-1}\subseteq \ldots \subseteq Y_{t}^{1}.$
This proves the Fact $1$.

We are now ready to prove our result.
To calculate $(m+1)*a$ we can apply Lemma \ref{co} for the same $a$ and for $b=ma$ and obtain:
\[((m+1)a)*c=(a+(ma))*c=a*c+(ma)*c+ a*((ma)*c)-(a*(ma))*c+\sum_{w\in W}\beta _{w}w\langle a, ma,   c\rangle .\]

We  can then apply the inductive assumption and Fact $1$  to terms which appear on the right hand side to deduce the conclusion.
\end{proof}

We will now introduce some notation which will be used in the proof of Proposition \ref{12345} below.

 $ $

{\bf Notation 1.} Let $A$ be a brace of cardinality $p^{n}$, where $p$ is a prime number such that $n+1<p$. Let  $+, \circ , *$ be the operations in this brace. Assume that $x,y\in pA, z\in A$, and
let $E(x, y, z)\subseteq A$ denote the set consisting of any formal product of elements $x$ and $y$ and one element $z$ at the end of each product under the operation $*$,  in any order, with any distribution of brackets,  each formal product having $x$ and $y$ appear at least once. Moreover, we assume that all formal products which are elements of $E(x,y,z)$ have length less than $p$.  Intuitively this assumption is justified by the fact  that all products of $p-1$ or more elements from $pA$ and an element from $A$ at the end  are $0$ by Proposition \ref{b}. Let $V_{x,y,z}$ be  a vector obtained from elements of $E(x,y,z)$ arranged in a such way that shorter formal  products of elements $x,y,z$  are situated  before longer formal  products.

% We consider only products of less than $p$ elements (so there are less than $p-1$ occurrences of elements from the set $\{x,y\}%$).

$ $ 

\section{ The binary operation $\cdot $}
 In this section we will prove the following proposition: 

\begin{proposition}\label{12345}
Let $A$ be a  brace of cardinality $p^{n}$, where $p>n+1$. Let $\xi =\gamma  ^{p^{p-1}}$, where $\gamma $ is a primitive root modulo $p^{p}$. Define the binary operation $\cdot $ on $A$ as follows.
\[a\cdot b=\sum_{i=0}^{p-2}\xi ^{p-1-i}((\xi ^{i}a)* b),\]
    for $a, b\in A$.
 Then $(a+b)\cdot c=a\cdot c+b\cdot c, a\cdot (b'+c)=a\cdot b'+a\cdot c$
and $(a\cdot b)\cdot c-a\cdot (b \cdot c)=(b\cdot a)\cdot c-b\cdot (a\cdot c),$
for every $a, b\in pA$, $b', c\in A$.
In particular, $(pA, +, \cdot )$ is a pre-Lie algebra.
\end{proposition}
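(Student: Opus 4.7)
The proposition splits into three identities. Closure of $\cdot$ inside $pA$ is automatic, since $pA$ is an ideal of $A$ by Lemma~\ref{citepassage}: $\xi^{i}a\in pA$ whenever $a\in pA$, so $(\xi^{i}a)*b\in pA$ and $a\cdot b\in pA$. Left distributivity $a\cdot(b'+c)=a\cdot b'+a\cdot c$ is immediate from left-distributivity of the brace operation $*$, applied summand by summand with the fixed weights $\xi^{p-1-i}$.

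For right distributivity $(a+b)\cdot c=a\cdot c+b\cdot c$ I would compute
\[
  (a+b)\cdot c - a\cdot c - b\cdot c \;=\; \sum_{i=0}^{p-2}\xi^{p-1-i}\bigl[(\xi^{i}a+\xi^{i}b)*c - (\xi^{i}a)*c - (\xi^{i}b)*c\bigr]
\]
and expand the bracket via Lemma~\ref{co}. The result is a $\mathbb{Z}$-linear combination of brace words containing one $c$ at the end and a total of $k$ copies of $\xi^{i}a,\xi^{i}b$, with each of $a,b$ appearing at least once and $2\le k\le p-2$ by Proposition~\ref{b}. Applying Corollary~\ref{777} iteratively, in the spirit of Fact~$1$ inside its proof, lets me pull each factor $\xi^{i}$ to the outside: every such brace word of degree $k$ becomes $\xi^{ik}$ times a $\xi$-free brace word, modulo words of strictly larger degree. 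Summing in $i$, each fixed brace word acquires the coefficient $\xi^{p-1}\sum_{i=0}^{p-2}\xi^{i(k-1)}$, which vanishes modulo $p^{p}$ (and hence in $A$) for all $1\le k-1\le p-3$ by Lemma~\ref{Engelxi}. A downward induction on $k$, starting from $k=p-2$ where the remainder is already zero by Proposition~\ref{b}, absorbs the higher-degree corrections and forces the whole sum to be zero.

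For the pre-Lie identity, equivalent to $(a\cdot b - b\cdot a)\cdot c = a\cdot(b\cdot c) - b\cdot(a\cdot c)$, I would expand both sides using the definition of $\cdot$. Left-distributivity of $*$ turns the right-hand side into
\[
  \sum_{i,j=0}^{p-2}\xi^{2(p-1)-i-j}\bigl[(\xi^{j}a)*\bigl((\xi^{i}b)*c\bigr) - (\xi^{j}b)*\bigl((\xi^{i}a)*c\bigr)\bigr],
\]
and the fundamental brace identity $x*(y*c)=(x\circ y)*c-x*c-y*c$ collapses the bracket to $(\xi^{j}a\circ\xi^{i}b)*c - (\xi^{j}b\circ\xi^{i}a)*c$. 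On the left I first rewrite $\xi^{j}(a\cdot b)=\sum_{i}\xi^{p-1-i}\bigl((\xi^{i}a)*(\xi^{j}b)\bigr)$ (using left-distributivity of $*$ to move $\xi^{j}$ inside), then distribute the outer $*c$ over the inner sum via repeated applications of Lemma~\ref{co}. After the symmetric treatment of $(b\cdot a)\cdot c$ and the index swap $i\leftrightarrow j$, the leading contributions on the two sides match, and pre-Lie reduces to showing that every correction produced by Lemma~\ref{co} cancels.

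The main obstacle is precisely this last cancellation. Each use of Lemma~\ref{co} spawns brace words of strictly larger degree in $\{a,b\}$, weighted by double $\xi$-sums of the form $\sum_{i,j}\xi^{-i-j+\sigma}$ whose exponent shift $\sigma$ depends on the positions of the $\xi$-factors inside the word. My plan is to organise these contributions by total degree $k$ in $\{a,b\}$ and, via a downward induction on $k$ modelled on Fact~$1$, to reduce each residual coefficient to a geometric $\xi$-sum $\sum_{i}\xi^{i\ell}$ whose exponent $\ell$ is not a multiple of $p-1$ in the non-trivial range; Lemma~\ref{Engelxi} then makes these sums zero modulo $p^{p}$, while Proposition~\ref{b} handles degrees $k\ge p-1$ directly. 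Combined with the closure of $pA$ under $\cdot$ and the other two identities, this will give $(pA,+,\cdot)$ the structure of a pre-Lie algebra.
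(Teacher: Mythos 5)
Your overall architecture coincides with the paper's: closure and left distributivity are immediate; right distributivity and the pre-Lie identity are reduced, via Lemma~\ref{co} and Corollary~\ref{777}, to the vanishing of $\xi$-weighted sums of correction words, with Proposition~\ref{b} cutting everything off at degree $p-1$ and Lemma~\ref{Engelxi} killing full-period geometric sums. Your route to the pre-Lie identity through $x*(y*c)=(x\circ y)*c-x*c-y*c$ is a reasonable substitute for the paper's appeal to lines 7--27 of the proof of Theorem 6 in \cite{passage2} (note only that the leftover terms $(\xi^{j}a)*c$, $(\xi^{i}b)*c$ cancel only after the double sum over $i,j$ is symmetrized).

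The gap is in the cancellation step itself. Extracting the scalar $\xi^{i}$ from a word $w$ of degree $k$ yields $\xi^{ik}w\langle x,y,z\rangle$ \emph{plus} corrections of strictly higher degree whose integer coefficients depend on $i$. Hence the total coefficient of a fixed brace word of degree $k'$ in $\sum_{i}\xi^{p-1-i}C(i)$ is not the full-period sum $\xi^{p-1}\sum_{i}\xi^{i(k'-1)}$: it also contains the correction coefficients inherited from every word of lower degree, and these are truncated geometric sums (off-diagonal entries of $\sum_{i=0}^{p-2}(\xi^{p-2}M)^{i}$ in the paper's notation) which Lemma~\ref{Engelxi} does not annihilate term by term. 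Your downward induction on $k$ cannot absorb them as stated, because the higher-degree debris produced at stage $i$ is already evaluated at $x,y,z$ rather than at $\xi^{i'}x,\xi^{i'}y,z$, so the inductive hypothesis does not apply to it. The paper closes this step with two ingredients you omit: the one-step extraction is encoded in a fixed integer upper-triangular matrix $M$ with $V_{\xi x,\xi y,z}=MV_{x,y,z}$, the periodicity $\xi^{p-1}a=a$ in $A$ gives $M^{p-1}V_{x,y,z}=V_{x,y,z}$, and the telescoping identity $(I-\xi^{p-2}M)\sum_{i=0}^{p-2}(\xi^{p-2}M)^{i}=I-(\xi^{p-2}M)^{p-1}$ then yields $(I-\xi^{p-2}M)Q_{x,y,z}=0$, after which triangularity and Lemma~\ref{Engelxi} force $Q_{x,y,z}=0$. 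You need this periodicity-plus-telescoping input (or an explicit, and nontrivial, verification that the off-diagonal entries of $\sum_{i}(\xi^{p-2}M)^{i}$ vanish modulo $p^{n}$) for your induction to close; the same repair is required in your treatment of the pre-Lie identity.
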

\begin{proof} {\bf Part 1.} By the definition of a left brace $x\cdot ( y+ z)=x\cdot y+ x\cdot z$. We  will show that
$(x+y)\cdot z=x\cdot z+y\cdot z$ for $x,y\in pA, z\in A$.
The proof is very similar to the proof of Proposition $5$ from \cite{passage2}.
Observe that   $(x+y)\cdot z= \sum_{i=0}^{p-2}\xi ^{p-1-i}((\xi ^{i}x+\xi ^{i}y)* z).$
When we apply  Lemma \ref{co}  to $a=\xi ^{i} x$ and $b= {\xi ^{i}}y$, $c=z$ we get that
\[\xi ^{p-1-i}(\xi ^{i} x+{\xi ^{i}}y)* z=
\xi ^{p-1-i}((\xi ^{i} x)*z)+\xi ^{p-1-i}(({\xi ^{i}}y)*z)+ \xi ^{p-1-i}C(i),\]
where $C(i)$ is a sum of some products (under the operation $*$) of elements $a=\xi ^{i}x$, $b=\xi ^{i}y$ and  $c=z$.

We would like to prove  that  $(x+y)\cdot z =x\cdot z+y\cdot z$ for $x,y\in pA$ and $z\in A$.
 By the above \[(x+y)\cdot z=\sum_{i=0}^{p-2}
 \xi ^{p-1-i}((\xi ^{i} x)*z)+\xi ^{p-1-i}(({\xi ^{i}}y)*z)+ \xi ^{p-1-i}C(i),\] and
  \[x\cdot z+y\cdot z=\sum_{i=0}^{p-2} \xi ^{p-1-i}((\xi ^{i} x)*z)+\xi ^{p-1-i}(({\xi ^{i}}y)*z).\]
 Consequently,  it is enough to prove that $\sum_{i=0}^{p-2}\xi ^{p-1-i}C(i)=0.$
  Let $V_{\xi ^{i}x, \xi ^{i}y, z}$ be a vector constructed as in Notation $1$, so entries of this vector  are products of elements $\xi ^{i}x$, $\xi ^{i}y$ and  $z$. We assume that $x,y\in pA$, $z\in A$, as mentioned before. By application of Corollary \ref{777}  followed  by  Lemma \ref{co},  every element from the set $E(\xi x, \xi y, z)$ can be written as a linear combination
   of elements from $E(x, y, z)$, with  integer coefficients which do not  depend on $x, y$ and $z$, provided that $x,y\in pA$. We can then  organize these coefficients into a matrix, which we will call $M=\{m_{i,j}\}$,  so that we obtain
 $MV_{x,y, z}=V_{\xi x, \xi y, z},$ for $x,y\in pA, z\in A$.

 Observe that  elements from $E(x,y, z)$ (and from $E(\xi x, \xi y, z)$) which are longer appear after elements which are shorter in our vectors $V_{x,y, z}$ and $V_{\xi x, \xi y, z}$. Notice that, by Corollary \ref{777} and  Lemma \ref{co}, $M$ is an upper triangular matrix.

 The first four elements in the  vector
 $V_{\xi x, \xi y, z}$ are $(\xi x)*((\xi y)*z)$, $((\xi x)*(\xi y))*z$, $(\xi y)*((\xi x)*z) $ and $ ((\xi y)*(\xi x))*z$ (arranged in some order).
  Suppose that $(\xi x)*((\xi y)*z)$ is the first entry in the vector $V_{\xi x, \xi y, z}$ (so $x*(y*z)$ is the first entry in the vector $V_{x, y, z}$).
 Recall that we have assumed that $x,y\in pA$, $z\in A$. By application of Corollary \ref{777} followed by Lemma \ref{co}  several times  $(\xi x)*((\xi y)*z)$ can be written as a sum of element  $\xi ^{2}(x*(y*z))$ and some  elements  from $E(x,y,z)$ of length  larger than $3$ (so these elements are products of  four or more elements from the set $\{x,y,z\}$). Therefore  the first diagonal entry in $M$ equals $\xi ^{2}$, so $m_{1, 1}=\xi ^{2}$.

 Note  that the following diagonal entries of $M$ are  $\xi ^{i}$ for some natural numbers $i$ such that  $1<i< p-1$. Observe that   $i<p-1$, because any product where $\xi ^{p-1}$ appears would have $p-1$ occurrences of elements from the set $\{x,y\}$; however, $x,y\in pA$, and so any such product would belong to $p^{p-1}A=0$, by Proposition \ref{b}, and for this reason  such elements were not included in the definition of $E(x,y,z)$ and $V_{x,y,z}$.
 Note also that $i>1$, since any element from $E(a,b, c)$ contains both $a$ and $b$.

As an another example, note that  $(\xi x)*((\xi x)*((\xi x)*z))$ can be written using Corollary \ref{777} and Lemma \ref{co} as $\xi ^{3}(x*(x*(x*z)))$ plus  elements  from $E(x,y,z)$ of length at least $5$.
Hence  $M$ is an upper triangular matrix with all diagonal entries of the form $\xi ^{i}$, where $1<i< p-1$.
Therefore all diagonal entries of the matrix  $\xi ^{(p-1)-1}M$ are $\xi ^{p-2+i}$ for some  $1<i<p-1$.

Note that $\xi ^{p-2+i}\equiv \xi ^{i-1}\mod p^{p}$, since $\xi ^{p-1}\equiv 1 \mod p^{p}$, so the diagonal entries are congruent to $\xi ^{i-1}$ modulo $p$, where $0<i-1<p-1$.
By Lemma  \ref{Engelxi},  diagonal entries of   the matrix $\xi ^{p-1-1}M$ are
not congruent to $1$ modulo $p$.

Note also that $M$ does not depend on $x, y$ and $z$, as we only used relations from Lemma \ref{co} to construct it.

Consequently, for $x,y\in pA$, $z\in A$ and  for every $i$, we have  $V_{\xi ^{i}x, \xi ^{i}y, z}=M^{i}V_{x,y, z}.$ 
Note that  $\xi ^{p-1}x=x$ and $\xi ^{p-1}y=y$, because  $\xi ^{p-1}$ is congruent to $1$ modulo $p^{n}$ and $p^{n}x=p^{n}y=0$, since the group $(A, +)$ has cardinality $p^{n}$ and $n<p-1$. Consequently,
$V_{x, y, z}=V_{\xi ^{p-1}x, \xi ^{p-1}y, z}=M^{p-1}V_{x, y, z}=(\xi ^{(p-1)-1}M)^{p-1}V_{x, y, z}$
for $x,y\in pA$, and $z \in A$.

Notice that there exists a vector $V$ with integer entries such that,
\[ C(i)=V^{T}V_{\xi ^{i}x, \xi ^{i}y, z}=V^{T}M^{i}V_{x, y, z},\]  for each $i$, where $V^{T}$ is the transpose of $V$.

Denote $M^{0}=I$, the identity matrix.  Recall that $\xi ^{p-1}\equiv 1 \mod p^{n}$. It follows that 
\[\sum_{i=0}^{p-2}\xi ^{(p-1)-i}C(i)=\sum_{i=0}^{p-2}(\xi ^{(p-1)-1})^{i}C(i)=V^{T}Q_{x,y,z}\] where 
$Q_{x,y,z}=\sum_{i=0}^{p-2}(\xi ^{(p-1)-1}M)^{i}V_{x,y,z}.$ It remains to prove that all entries of vector $Q_{x,y,z}$ are zero. Notice that 
\[(I-\xi ^{(p-1)-1}M)\sum_{i=0}^{p-2}(\xi ^{(p-1)-1}M)^{i}=I-(\xi ^{(p-1)-1}M)^{p-1},\]
 consequently, 
 \[(I-\xi ^{(p-1)-1}M)Q_{x,y,z}= (I-(\xi ^{(p-1)-1}M)^{p-1})V_{x,y,z}=0,\] where $I$ is the identity matrix of the same dimension as $M$.  
 Recall
that entries of the vector $Q_{x,y,z}$ are elements of $A$ and therefore their additive orders are
powers of $p$. Recall that the diagonal entries of the matrix $I-\xi ^{(p-1)-1}M$ are coprime
with $p$, and that this matrix is upper triangular. Therefore $Q_{x,y,z}=0$.

$ $

{\bf Part 2.} We will show that
 $(a\cdot b)\cdot c-a\cdot (b \cdot c)=(b\cdot a)\cdot c-b\cdot (a\cdot c),$
for every $a, b\in pA$, $c\in A$. We will use a proof similar to the proof of Theorem $6$ from \cite{passage2}.
   Assume that $x,y\in pA, z\in A$.
By Lemma \ref{co}  we get
\[(x+y)*z=x*z+y*z +x*(y*z)-(x*y)*z +d(x,y,z),\]
\[ (y+x)*z=x*z+y*z+y*(x*z)-(y*x)*z +d(y,x, z),\]
where $d(x, y, z)=E^{T}V_{x, y, z}$ for some vector $E$ with integer entries (and these entries do not depend on $x, y, z$)  and where $V_{x, y, z}$ is a vector which occured in part $1$ of our proof above. Observe that  $d(x, y, z)$ is a combination of elements with three or more occurrences of elements from the set $\{x, y\}$). 

 We will now use  lines 7-27 of the proof of Theorem $6$ in \cite{passage2}, the only difference being that in line $18$ we need to use part $1$ of our proof here (above) instead of Proposition $5$ from \cite {passage2}.
 We then get
 that to show that
 \[(x\cdot y)\cdot z-x\cdot (y \cdot z)=(y\cdot x)\cdot z-y\cdot (x\cdot z),\]
for every $x,y \in pA$, $z\in A$ it suffices to show that

 \[\sum_{i,j=0}^{p-2}\xi ^{p-1-i+p-1-j}d(\xi ^{i}x, \xi ^{j}y,z)=0,\] for all $x,y \in pA,  z\in A.$

{\em Proof that  $\sum_{i,j=0}^{p-2}\xi ^{p-1-i+p-1-j}d(\xi ^{i}x, \xi ^{j}y, z)=0$} for $x,y\in pA,$  and $z\in A$:

 Observe that $d(x,y,z)=w(x,y,z)+v(x,y,z)$, where $w(x,y,z)$ contains all the products of elements $x,y,z$ which appear as summands in  $d(x,y,z)$ and in which $x$ appears at least twice, and
$v(x,y,z)$ is a sum of products which are summands in $d(x,y,z)$ and in which $x$ appears only once (and consequently $y$ appears at least twice).
It is sufficient to show that $\sum_{i,j=0}^{p-2}\xi ^{p-1-i+p-1-j}w(\xi ^{i}x, \xi ^{j}y, z)=0$ and $\sum_{i,j=0}^{p-2}\xi ^{p-1-i+p-1-j}v(\xi ^{i}x, \xi ^{j}y, z)=0$.
 It suffices to show that $\sum_{i=0}^{p-2}\xi ^{p-1-i}w(\xi ^{i}x, y',z)=0$
and  $\sum_{j=0}^{p-2}\xi ^{p-1-j}v(x', \xi ^{j}y, z)=0$ for any $x, x',  y, y'\in pA, z\in A$ (note that $x', y'$ should not be confused with inverses of $x$ and $y$).

We will first show that \[\sum_{i=0}^{p-2}\xi ^{p-1-i}w(\xi ^{i}x, y',z)=0.\]

Note that there is a vector $W$ with integer entries such that $w(x,y',z)=W^{T}V_{x,y',z}'$, where $V_{x,y',z}'$ is a vector constructed similarly as in Notation $1$ but only including as entries  products from $E(x,y',z)$  in which $x$ appears at least twice (namely, for $x,y'\in pA$, $z\in A$, let ${\tilde E}(x,y',z)$ be the subset of  $E(x, y', z)$ consisting of these products from $E(x,y',z)$ in which $x$ appears at least twice,  and let
 $V_{x,y',z}'$  be a vector whose entries products from ${\tilde E}(x,y',z)$ arranged in such way that shorter products of elements are situated before longer products).
 It follows that
\[ w(\xi ^{i}x, y',z)=W^{T}V'_{\xi ^{i}x, y',z}=W^{T}M^{i}V'_{x, y',z}\] for each $i$, and for all $x, y'\in pA, z\in A$, where $W^{T}$ is the transpose of $W$, and $M$ is some upper triangular  matrix with integer entries such that
  $V'_{\xi x, y, z}=MV'_{x,y',z}$ and hence $V'_{\xi ^{i}x, y, z}=M^{i}V'_{x,y',z}$ for every $i$  (it can be shown that such a matrix $M$ exists as
in part $1$ of this proof, since $x, y'\in pA$).

  Arguing as in the first part of this proof, we see that all diagonal entries of the matrix $I-\xi ^{(p-1)-1}M$ are coprime to $p$, by Lemma \ref{Engelxi}. It follows because  the
 diagonal entries of $M$ are $\xi ^{i}$ where
 $i \geq 2$, because $a$ appears more than once in each product which is an entry in $V'_{x, y', z}$  (and $i<p-1$  since all the products of length  longer than $p-2$ are zero, by Proposition \ref{b}).

Now we calculate

\[\sum_{i=0}^{p-2}\xi ^{(p-1)-i}w(\xi ^{i}x, y',z) =\sum_{i=0}^{p-2}(\xi ^{(p-1)-1})^{i}w(\xi ^{i}x, y',z) =W^{T}\sum_{n=0}^{p-2}(\xi ^{(p-1)-1}M)^{i}V'_{x,y',z},\]
 where $\xi ^{0}=1$ and $M^{0}=I$, the identity matrix.

Note that
\[(I-\xi ^{(p-1)-1}M)\sum_{i=0}^{p-2} (\xi ^{(p-1)-1}M)^{i}=I-(\xi ^{(p-1)-1}M)^{p-1}.\]
 Reasoning as in the proof of Theorem $6$ in \cite{passage} we obtain that

\[\sum_{i=0}^{p-2}\xi ^{p-1-i}w(\xi ^{i}x, y',z)=0.\]  Consequently,
  \[0=W^{T}\sum_{i=0}^{p-2}\xi ^{p-1-i}w(\xi ^{i}x, y',z)= \sum_{i=0}^{p-2}\xi ^{p-1-i}w(\xi ^{i}x, y',z),\]
  as required.

 The proof that
 \[\sum_{j=0}^{p-2}\xi ^{p-1-j}v(x', \xi ^{j}y, z)=0\] for all $x', y\in pA, z\in A$  is very similar to the proof of Theorem $6$ from \cite{passage2}, this proof is also very similar to the proof that $\sum_{i=0}^{p-2}\xi ^{p-1-i}w(\xi ^{i}x, y',z)=0$, so it is omitted.

\end{proof}

\section{The pullback $\wp ^{-1}$}

Let $A$ be a brace of cardinality $p^{n}$. 
For $a\in pA$ let $\wp^{-1}(a)$ denote an element $x\in A$ such that $px=a$. Such an element may not be uniquely determined in $A$, but we can fix for every $a\in pA$ such an element $\wp^{-1}(a)\in A$.
Notice that $p(\wp^{-1}(a))=px=a$.

 %Let $A$  be a brace, where $p$ is a prime number such that $p>n+1$.

For $A$ as above, $ann(p^{2})$ consists of all elements of $A$ which have additive order $p^{j}$ for $j\leq 2$. By Lemma $17$ from \cite{passage}, $ann(p^{2})$ is an ideal in $A$ (and so is $ann(p)$), provided that $p>n+1$.  Recall that if $I$ is an ideal in the brace $A$ then the factor brace $A/I$ is well defined.
In our situation $I=ann(p^{2})$, the elements of the brace $A/ann(p^{2})$ are cosets $[a]_{ann(p^{2})}:= a + ann(p^2)$ where $a \in A$, which we will simply denote by $[a]$, so $[a] =[b]$ if and only if $p^{2}(a-b)=0$.

\begin{lemma}\label{3} Let $A$ be a brace and let $A/ann(p^{2})$ be defined as above. 
Let $\wp^{-1} : pA \rightarrow A$ be defined as above. Then, for $a, b\in pA$ we have
$[\wp^{-1}(a)] + [\wp^{-1}(b)] = [\wp^{-1}(a + b)].$ 
This implies that, for any integer $m$ we have
 $[m \wp^{-1}(a)] = [\wp^{-1}(ma)].$
\end{lemma}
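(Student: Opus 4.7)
The plan is to reduce everything to the observation that multiplication by $p$ is a well-defined group homomorphism on the additive group $(A,+)$, whose fibers over $pA$ are cosets of the kernel $ann(p) \subseteq ann(p^{2})$.

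First I would fix $a,b \in pA$ and set $x = \wp^{-1}(a)$, $y = \wp^{-1}(b)$, $z = \wp^{-1}(a+b)$; note $a+b \in pA$ since $pA$ is an additive subgroup, so $z$ is defined. Then the key calculation is
\[
p(x+y-z) \;=\; px + py - pz \;=\; a + b - (a+b) \;=\; 0,
\]
where I use that $(A,+)$ is abelian and the defining property $p \wp^{-1}(c) = c$ for $c \in pA$. This shows $x+y-z \in ann(p)$, and since $ann(p) \subseteq ann(p^{2})$ we conclude $x+y-z \in ann(p^{2})$. Passing to cosets in $A/ann(p^{2})$ yields $[x]+[y] = [x+y] = [z]$, i.e.\ $[\wp^{-1}(a)] + [\wp^{-1}(b)] = [\wp^{-1}(a+b)]$.

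For the second statement I would induct on $m \geq 0$. The case $m=0$ is immediate (both sides are $[0]$, taking $\wp^{-1}(0)=0$ as the chosen representative, or else noting that any preimage of $0$ lies in $ann(p) \subseteq ann(p^{2})$). Assume the claim holds for $m$; since $a \in pA$ implies $ma \in pA$, the first part applied to the pair $(ma, a)$ gives
\[
[(m+1)\wp^{-1}(a)] \;=\; [m\wp^{-1}(a)] + [\wp^{-1}(a)] \;=\; [\wp^{-1}(ma)] + [\wp^{-1}(a)] \;=\; [\wp^{-1}((m+1)a)].
\]
For negative $m$ one applies the first part to the pair $(ma, -ma)$, giving $[\wp^{-1}(ma)] + [\wp^{-1}(-ma)] = [\wp^{-1}(0)] = [0]$, hence $[\wp^{-1}(-ma)] = -[\wp^{-1}(ma)] = -[m\wp^{-1}(a)] = [(-m)\wp^{-1}(a)]$.

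There is no real obstacle here; the only point requiring a brief remark is that $\wp^{-1}$ is not a function on all of $A$ and the choice of preimage matters on the nose, but any two choices differ by an element of $ann(p) \subseteq ann(p^{2})$, so the statement is independent of these choices once we pass to $A/ann(p^{2})$.
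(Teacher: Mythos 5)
Your proof is correct and follows essentially the same route as the paper: both reduce the coset identity to the computation $p\,\wp^{-1}(c)=c$, the only cosmetic difference being that you multiply by $p$ (landing in $ann(p)\subseteq ann(p^{2})$) while the paper multiplies by $p^{2}$ directly. Your explicit induction for the second claim merely fills in a step the paper leaves as an immediate consequence.
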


\begin{proof} Note that $[\wp^{-1}(a)] + [\wp^{-1}(b)] = [\wp^{-1}(a + b)]$ 
       is equivalent to
   $\wp^{-1}(a)+ \wp^{-1}(b) - \wp^{-1}(a + b) \in ann(p^{2})$ which is equivalent to
   $p^{2}(\wp^{-1}(a)+ \wp^{-1}(b)-\wp^{-1}(a + b)) = 0.$ This in turn is equivalent to
   $pa+pb-p(a+b) = 0,$ which holds true (since $p(\wp^{-1}(a)) = a$ for every $a\in A$, by the definition of the function $\wp^{-1}$).
  \end{proof}

\section{The binary operation $\odot $}

Let $A$ be a brace of cardinality $p^{n}$ where $p > n+1$ is a prime number. In this section we introduce a binary operation
$\odot : A/ann(p^{2})\times A/ann(p^{2})\rightarrow A/ ann(p^{2})$.
\begin{lemma}\label{666}
Let $(A, +, \circ )$ be a brace of cardinality $p^{n}$, for a prime number $p>n+1$. Let $\wp^{-1}:pA\rightarrow A$ be defined as in the previous section. Let $a,b\in A$.  
Define
\[[a]\odot [b]=[\wp^{-1}((pa)*b)].\]
Then this is a well defined binary operation on $A/ann(p^{2})$.
\end{lemma}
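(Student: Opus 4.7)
To show that $\odot$ is well defined, I plan to verify three standard conditions. (a) $\wp^{-1}((pa)*b)$ is defined because $pa\in pA$ and $pA$ is an ideal by Lemma \ref{citepassage}, so $(pa)*b\in pA$. (b) The coset $[\wp^{-1}((pa)*b)]\in A/ann(p^{2})$ is unambiguous because any two preimages $x,x'$ with $px=px'=(pa)*b$ satisfy $p(x-x')=0$, so $x-x'\in ann(p)\subseteq ann(p^{2})$.

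The substantive point is (c): the coset must depend only on $[a],[b]\in A/ann(p^{2})$. Taking $s,t\in ann(p^{2})$, so that $ps,pt\in ann(p)\cap pA$, set $D:=(p(a+s))*(b+t)-(pa)*b\in pA$. By Lemma \ref{3} the difference of cosets equals $[\wp^{-1}(D)]$, and $\wp^{-1}(D)\in ann(p^{2})$ is equivalent to $pD=0$ (since $p\cdot\wp^{-1}(D)=D$ forces $p^{2}\wp^{-1}(D)=pD$). By right additivity $c*(pd)=p(c*d)$, the condition becomes the brace identity $(p(a+s))*(p(b+t))=(pa)*(pb)$. I would expand the left-hand side via right additivity and Lemma \ref{co} (applied with $pa,ps\in pA$ and $pb+pt\in A$), displaying the difference as a sum of correction terms in which each summand contains at least one factor from $\{ps,pt\}\subseteq ann(p)\cap pA$.

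The main obstacle is showing that this sum of correction terms vanishes. The strategy is iterated application of Corollary \ref{777}: for any occurrence of $\alpha\in\{ps,pt\}$ appearing inside a correction, since $p\alpha=0$, the identity $(p\alpha)*c=p(\alpha*c)+\sum_{w}\gamma_{w}w\langle\alpha,c\rangle$ with $\alpha\in pA$ forces $p(\alpha*c)=-\sum_{w}\gamma_{w}w\langle\alpha,c\rangle$, where each summand contains $\alpha$ appearing at least twice. Iterating on each newly produced occurrence yields terms with strictly more factors from $pA$. By Proposition \ref{b}, any product containing $p-1$ or more factors from $pA$ vanishes, so after finitely many iterations (the initial correction terms involve at least three $pA$-factors, namely one of $ps,pt$ and the existing $pa,pb$, so roughly $p-4$ iterations suffice) the entire sum is zero. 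Hence $pD=0$, which completes the verification of well-definedness.
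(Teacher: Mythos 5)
Your setup is sound: points (a) and (b) are fine, and the reduction of well-definedness to the identity $(p(a+s))*(p(b+t))=(pa)*(pb)$ for $s,t\in ann(p^{2})$ is correct. The gap is in the mechanism you propose for killing the correction terms. The identity $p(\alpha*c)=-\sum_{w}\gamma_{w}w\langle\alpha,c\rangle$ (Corollary \ref{777} with $m=p$ and $p\alpha=0$) only rewrites expressions in which $\alpha\in\{ps,pt\}$ occupies the \emph{left} slot of $*$ and carries an explicit external factor of $p$. It therefore gives no purchase on the terms where the perturbation of the \emph{second} argument appears as the final factor --- most basically $(pa)*(pt)$, which arises immediately from right additivity $(pa)*(pb+pt)=(pa)*(pb)+(pa)*(pt)$. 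Here $pt\in ann(p)$ sits in the right slot; right additivity only yields $(pa)*(pt)=p\bigl((pa)*t\bigr)$, and $(pa)*t$ lies in $ann(p^{2})\cap pA$, which is not contained in $ann(p)$ in general (take $(A,+)$ with an element of additive order $p^{3}$), so you cannot conclude $p\bigl((pa)*t\bigr)=0$ from additive orders and the ideal property alone. The paper handles exactly this point with a tool absent from your proposal: by Lemma \ref{citepassage} and Remark \ref{555}, $pa=u^{\circ p}$ for some $u\in A$, and Lemma \ref{citenote} gives $u^{\circ p}*c=\sum_{i=1}^{p}\binom{p}{i}e_{i}$ with every $e_{i}\in ann(p)$ when $c\in ann(p)$ (since $ann(p)$ is an ideal); as $p\mid\binom{p}{i}$ for $0<i<p$ and $e_{p}$ is a product of more than $p$ elements, hence zero, this forces $(pa)*(pt)=0$. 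Some such input about $\circ$-powers is genuinely needed; Corollary \ref{777} and Lemma \ref{co} control only the first argument of $*$.

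A secondary problem is that even for the first-argument terms your iteration is not well-founded as described: after one application of $p(\alpha*c)=-\sum_{w}\gamma_{w}w\langle\alpha,c\rangle$, the resulting summands $w\langle\alpha,c\rangle$ no longer carry an external factor of $p$, so the identity cannot be reapplied to them, and ``strictly more factors from $pA$'' does not by itself give zero (elements of $p^{2}A$ need not vanish). You do not need the iteration anyway: every correction term ending in $pb$ contains a factor $ps\in ann(p)$, hence, after pulling the $p$ out of the final factor by right additivity, it equals $p$ times a product lying in the ideal $ann(p)$, which is $0$. This is essentially the paper's treatment of the first variable, done there in one stroke via Lemma \ref{citeEngel}: the error term $e$ lies in the ideal generated by $p(x-x')\in ann(p)$, hence $pe=0$. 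So your first-variable argument can be repaired easily, but the second-variable terms require the $u^{\circ p}$ argument (or an equivalent), and without it the proof is incomplete.
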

\begin{proof} We need to  show that $\odot $ is a well defined operation, so that the result does not depend on the choice of coset representatives $x, y$ of cosets $[x], [y]$.

Let $x, x'\in A$ be elements satisfying $[x]=[x']$; then $p^{2}(x-x')=0$, which is equivalent to $p(x-x')\in ann(p)$.
To show that $[x]\odot [y]$ does not depend on the representative $x$
we need to show that $\wp^{-1}((px)*y)-\wp^{-1}((px')*y) \in ann(p^{2}).$
This is equivalent to $p((px)*y)-p((px')*y)=0.$
Applying Lemma \ref{citeEngel} for $a=p(x-x')$, $b=px'$ and $c=y$, we obtain
\[(px)*y=(p(x-x')+(px'))*y=(px')*y+e ,\]
where $e$ belongs to the ideal generated in  the brace $A$ by the element $p(x-x')$.
It follows that  $e\in ann(p)$ since $p(x-x')\in ann(p)$
and $ann(p)$ is an ideal in $A$ by Lemma $17$ of \cite{passage}.
We conclude that $pe=0$, hence \[p((px)*y)-p(( px')*y)=0,\] as required. Therefore $[x]\odot [y]$ does not depend on the choice of the representative for the coset $[x]$.

 Next we will show that $[x]\odot [y]$ does not depend on the choice of the representative for the coset $[y]$. We need to show that, if $y' \in A$ satisfies $[y] = [y']$ then 
\[\wp^{-1}((px)*y)-\wp^{-1}((px)*y')\in ann(p^{2}).\]
By the above, this is equivalent to $p((px)*y)-p((px)*y')=0.$
By the defining relations of any brace we have
$p((px)*y)-p(( px)*y')=(px)*(p(y-y')).$
 By Lemma \ref{citepassage}, $px=u_{1}^{\circ p}\circ u_{2}^{\circ p}\circ \cdots \circ u_{s}^{\circ p}$ for some $u_{1}, \ldots , u_{s}\in A$, and by Remark \ref{555} we get $pa=u^{\circ p}$ for some $u\in A$. 
 By the formula from Lemma \ref{citenote}  we get
 $(px)*(p(y-y'))=u^{\circ p}*(p(y-y'))=0,$ since $p^{2}(y-y')=0$ (since $A^{p}=0$ by \cite{rump}, page 166). 
 Therefore  the result does not depend on the choice of the representative for the coset $[y]$.
\end{proof}

\section{ Main results}
 Our main result is the following:

\begin{theorem}\label{dc}
 Let $(A, +, \circ )$ be a brace of cardinality $p^{n}$, where $p$ is a prime number such that  $p>n+1$. Let $\odot $ be defined as in the previous section, so
$[x]\odot [y]=[\wp^{-1}((px)*y)]$. Let $\xi =\gamma  ^{p^{p-1}}$ where $\gamma $ is a primitive root modulo $p^{p}$.
 Define a binary  operation $\bullet $ on $A/ann(p^{2})$ as follows:
\[[x]\bullet [y]=\sum_{i=0}^{p-2} \xi ^{p-1-i} [\xi ^{i}x]\odot [y],\]
for $x,y\in A$. Then $A/ann(p^{2})$ with the binary operations $+$ and $\bullet $ is a pre-Lie ring.
\end{theorem}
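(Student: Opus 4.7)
The plan is to reduce everything to Proposition \ref{12345} by rewriting $\bullet$ cleanly in terms of the pre-Lie operation $\cdot$ on $pA$ through the section $\wp^{-1}$.

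The first step is to establish the reformulation
\[
[x] \bullet [y] = [\wp^{-1}((px) \cdot y)]
\]
for all $x, y \in A$, where $\cdot$ is the operation from Proposition \ref{12345}. Unfolding the definitions of $\bullet$ and $\odot$ gives $[x] \bullet [y] = \sum_{i=0}^{p-2} \xi^{p-1-i}[\wp^{-1}((\xi^{i} px)*y)]$. Two applications of Lemma \ref{3} (first to push the integer scalars $\xi^{p-1-i}$ inside $\wp^{-1}$, then to collapse the sum) turn this into $[\wp^{-1}(\sum_{i=0}^{p-2} \xi^{p-1-i}(\xi^{i} px)*y)]$, which is $[\wp^{-1}((px) \cdot y)]$ by the very definition of $\cdot$. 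Since $px \in pA$, the observation in Proposition \ref{12345} that each $(\xi^i px)*y \in pA$ ensures $(px) \cdot y \in pA$, so $\wp^{-1}$ applies.

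With this reformulation in hand, the two distributive laws become immediate: for instance $[x+x'] \bullet [y] = [\wp^{-1}((px+px') \cdot y)] = [\wp^{-1}((px) \cdot y + (px') \cdot y)]$ by the left distributivity of $\cdot$ (valid for $px, px' \in pA$, $y \in A$ by Proposition \ref{12345}), and Lemma \ref{3} splits this as $[x] \bullet [y] + [x'] \bullet [y]$. Right distributivity uses the identity $a \cdot (b'+c) = a \cdot b' + a \cdot c$ from Proposition \ref{12345} in the same manner.

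For the pre-Lie identity, set $a = px$, $b = py$, and choose $t = \wp^{-1}(b \cdot z)$, $r = \wp^{-1}(a \cdot z)$, so that $pt = b \cdot z$ and $pr = a \cdot z$. Expanding both sides using the reformulation, the pre-Lie identity reduces via Lemma \ref{3} to showing $\wp^{-1}(\epsilon) \in ann(p^{2})$, where
\[
\epsilon = (a \cdot y) \cdot z - a \cdot t - (b \cdot x) \cdot z + b \cdot r \in pA,
\]
equivalently $p\epsilon = 0$. But the two distributive laws from Proposition \ref{12345} give $p(u \cdot v) = (pu) \cdot v = u \cdot (pv)$ whenever $u \in pA$, hence $p[(a \cdot y) \cdot z] = (a \cdot b) \cdot z$, $p(a \cdot t) = a \cdot (b \cdot z)$, $p[(b \cdot x) \cdot z] = (b \cdot a) \cdot z$, $p(b \cdot r) = b \cdot (a \cdot z)$, and therefore
\[
p\epsilon = (a \cdot b) \cdot z - a \cdot (b \cdot z) - (b \cdot a) \cdot z + b \cdot (a \cdot z),
\]
which vanishes by the pre-Lie identity of Proposition \ref{12345} applied to $a, b \in pA$ and $z \in A$.

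The main obstacle is recognising the initial reformulation: once one sees that $\bullet$ is nothing but the $\wp^{-1}$-lift of the known pre-Lie product $\cdot$ evaluated at $(px, y)$, the whole theorem becomes a push-down of Proposition \ref{12345} through $\wp^{-1}$. The only arithmetic subtlety is the rescaling $p(u \cdot v) = (pu) \cdot v = u \cdot (pv)$, which is what converts the terms involving the auxiliary sections $t, r$ into genuine triple $\cdot$-products on $pA$ where Proposition \ref{12345} actually applies.
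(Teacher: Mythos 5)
Your proposal is correct and follows essentially the same route as the paper: both rewrite $[x]\bullet[y]$ as $[\wp^{-1}((px)\cdot y)]$ with $\cdot$ the operation of Proposition \ref{12345}, deduce distributivity directly, and reduce the pre-Lie identity to that of $(pA,+,\cdot)$ by clearing the sections $\wp^{-1}$ through multiplication by powers of $p$ via the scaling relations $(pu)\cdot v=p(u\cdot v)=u\cdot(pv)$ for $u\in pA$. The only cosmetic difference is that the paper multiplies each of the four terms by $p^{2}$ separately, whereas you collect them into a single element $\epsilon\in pA$ and multiply once by $p$; the underlying computation is identical.
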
 
\begin{proof} 
\noindent
{\bf Part 1}. We need to show that  $([a]+[b])\bullet [c]=[a]\bullet [c]+[b]\bullet [c]$ and
$[a]\bullet ([b]+[c])=[a]\bullet [b]+[a]\bullet [c]$ for every $a,b,c\in A$.
Observe that the formula $[x]\bullet ([y]+[w])=[x]\bullet [y]+[x]\bullet [w]$ follows immediately from the fact that in every brace $x*(y+w)=x*y+x*w$.
We will show now that \[([x]+[y])\bullet [w]=[x]\bullet [w]+[y]\bullet [w].\]
 Notice that $[x]\bullet [y]=[\wp ^{-1}(px\cdot y)]$, where operation $\cdot $ is as in Proposition \ref{12345}. 
 By Proposition \ref{12345} and Lemma \ref{3} 
\[([x]+[y])\bullet [z]=[\wp ^{-1}((px+py)\cdot z)]=[\wp ^{-1}(((px)\cdot z)+((py)\cdot z))]=\]
\[=[\wp ^{-1}((px)\cdot z)]+[\wp ^{-1}((py)\cdot z)]=[x]\bullet [z]+[y]\bullet [z].\]
\medskip

{\bf Part 2.} We need to show that \[([x]\bullet [y])\bullet [z]-[x]\bullet ([y]\bullet [z])=([y]\bullet [x])\bullet [z]-[y]\bullet ([x]\bullet [z]),\]
for $x, y,z\in A$. Recall that $[x]\bullet [y]=[\wp^{-1}((px)\cdot y)].$ 
  Consequently,  \[([x]\bullet [y])\bullet [z]=[\wp^{-1}(px\cdot y)]\bullet[z]=[\wp^{-1} (((px)\cdot y)\cdot z)],\] and
\[[x]\bullet ([y]\bullet [z])=[x]\bullet [\wp^{-1}((py)\cdot z) ]=[\wp^{-1}((px)\cdot \wp^{-1}((py)\cdot z))].\]

Note that   \[p^{2}(\wp^{-1}((px)\cdot \wp^{-1}((py)\cdot z)))=p((px)\cdot \wp^{-1}((py)\cdot z))=(px)\cdot ((py)\cdot z),\]

On the other hand we have
$p^{2}(\wp^{-1} (((px)\cdot y)\cdot z))=p(((px)\cdot y)\cdot z).$
Note that $(px)\cdot  y\in pA$, since, by Lemma \ref{citepassage}, $pA$ is an ideal in $A$.

Let $a\in pA$, $b\in A$,
then $(na)\cdot b=n(a\cdot b)$, since \[(a+a+\cdots +a)\cdot b=a\cdot b+a\cdot b+\cdots +a\cdot b\]  by Proposition \ref{12345}.
Applying this for $n=p$, $a=(px)\cdot y$ and  $b=z$, we get that \[p(((px)\cdot y)\cdot z)=(p((px)\cdot y))\cdot z=((px)\cdot (py))\cdot z.\]

Therefore, \[([x]\bullet[y])\bullet [z]-[x]\bullet  ([y]\bullet [z])=([y]\bullet [x])\bullet [z]-[y]\bullet ([x]\bullet [z]),\] written using  this notation, and multiplied by $p^{2}$ gives an equivalent condition

\[((px)\cdot (py))\cdot z- (px)\cdot ((py)\cdot z)=((py)\cdot (px))\cdot z -(py)\cdot ((px)\cdot z).\]
  This condition holds by Proposition \ref{12345} applied for $a=px$, $b=py$, $c=z$. This concludes the proof.
\end{proof}
{\em Connection with the Group of Flows}. The group of flows of a pre-Lie algebra was invented in \cite{AG} and  in \cite{Rump}   Rump  suggested to use this construction to obtain a passage from pre-Lie algebras to $\mathbb F_{p}-$ braces. Notice that if $A$  is a strongly nilpotent brace of nilpotency index $k<n$ and cardinality $p^{n}$ with $n+1<p$ for a prime number $p$, then we can define a pre-Lie ring
$(A, +, \cdot)$ associated to the brace $A$ as in Proposition $5$ and Theorem $6$ in \cite{passage} (a small variation of this pre-Lie ring is obtained by  reversing the construction of the group of flows from \cite{Rump}).  Recall the formula from \cite{passage}:  $x\cdot y= \sum_{i=0}^{p-2} \xi ^{p-1-i} (\xi ^{i}x)*y,$
for $x,y\in A$.
Observe that  $[a]\bullet [b]=[\wp^{-1} ( \sum_{i=0}^{p-2} \xi ^{p-1-i} (\xi ^{i}pa)*b))]=
[\wp^{-1}( (pa)\cdot b )]=[\wp^{-1}(p(a\cdot b))]=[a\cdot b].$

It was shown in \cite{passage} that by applying the group of flows construction to the pre-Lie ring $(A, +, \cdot _{1})$ yields the original brace $(A, +, \circ )$, where $a\cdot _{1} b=-(1+p+ \cdots +p^{n})a\cdot b$, as mentioned in Notation $2$ in \cite{passage}. Therefore, by applying the group of flows construction to the pre-Lie ring $(A/ann(p^{2}), +, \bullet _{1} )$,
where $a\bullet _{1} b=-(1+p+\cdots +p^{n})a\bullet b$ for $a,b\in A$,
yields the brace $(A/ann(p^{2}), +, \circ )$.

$ $

The forthcoming paper \cite{newSmok} investigates how to recover  the brace $(A/ann (p^{2}), +, \circ )$ from the obtained pre-Lie ring $(A/ann (p^{2}), +, \bullet )$ in the general case when $A$ need not be right nilpotent (see also \cite{cl}). A correspondence between braces  $A/ann(p^{2})$ and a subset of left nilpotent pre-Lie rings with the same additive group is also investigated.

\vspace{5mm}

{\bf Acknowledgements.} We are grateful to the referee for many helpful suggestions which improved the original manuscript. In particular we are grateful for improving and clarifying  the second page of our introduction and for Remark \ref{555}, and for suggestions as to how to use this remark to simplify the  proof of Lemma \ref{666}.
  
The first author acknowledges support by the ISF grant 700/21, the BSF grant 2020/037 and the Vinik Chair
of mathematics which he holds.
The second author acknowledges support from the EPSRC programme grant EP/R034826/1 and from the EPSRC research grant EP/V008129/1.
Both authors are grateful to Leandro Vendramin and Wolfgang Rump for useful comments.
\vspace{1mm}

\pagebreak

\def\F{{\mathbb F}}

\begin{center}
{\large\bf Groups, Lie algebras and braces}

\end{center}

\begin{center}
\author{Aner Shalev, Agata Smoktunowicz}
\end{center}
\date{}

\maketitle
\begin{abstract} We use powerful Lie rings associated with finite $p$-groups in the study of brace automorphisms with few fixed points.  As an application  we  bound the number of elements which commute with a given element in a brace, as well as the number of elements which multiplied from left by a given element give zero.  
 We also study various Lie rings associated to powerful groups and braces whose adjoint groups are powerful, and show that the obtained Lie and pre-Lie rings are also powerful. 

We also show that  braces whose adjoint groups are powerful and  powerful left nilpotent pre-Lie rings  are in one-to-one correspondence and that they are  right nilpotent under some  cardinality assumptions.
%.
%We  also show that braces whose adjoint groups are powerful are strongly nilpotent.
 %Similarly, we show, that left nilpotent pre-Lie rings whose Lie rings are powerful are right nilpotent, and hence  nilpotent. 
\end{abstract}

\section{Introduction} 

Braces were introduced in 2007 by Wolfgang Rump \cite{rump}; they are a generalisation of Jacobson radical rings with the two-sided braces being exactly the Jacobson radical rings.
 
One of the main motivations for investigating braces is their connection with set-theoretic solutions of the Yang-Baxter equation \cite{rump}.
In this paper we investigate some group theory results and their applications in the theory of braces.  Notice also that braces can be used in the investigation of groups. 
 For example, it is known that all groups of nilpotency class two \cite{Leandro} are adjoint groups of braces,
 hence nilpotent groups of class two can be investigated using brace theory methods.

 %One of the main motivations for investigating braces is their connections with
% set-theoretic solutions of the Yang-Baxter equation \cite{rump, cjo} (see also \cite{doikou, doikou2, pent} for  
% some connections with mathematical physics); another motivation is the
% connections of braces with homological group theory, since braces are exactly groups with bijective 1-cocycles.
% Another motivation to investigate braces is the
% connection of braces with homological group theory, since braces are exactly groups with bijective 1-cocycles.

%*******************************************
%The theory of braces is also connected to algebraic number theory and its generalisations through the concept of Hopf-Galois extensions of abelian type \cite{Bachiller}. 
 %It was shown by Gateva-Ivanova in \cite{gateva} that braces are in one-to-one correspondence with involutive braided groups, a structure which has been used to investigate involutive set-%theoretic solutions of the Yang-Baxter equation since 1999.

  There also exist connections between braces and homological group theory, since braces are exactly groups with bijective 1-cocycles.  See \cite{Bachiller, doikou, doikou2, pent, gateva} for  
 some connections with mathematical physics, braided groups and Hopf-Galois extensions. 

 %On the other hand, connections between braces and pre-Lie algebras make it possible to find connections between braces and %symmetric brace algebras \cite{bracealgebras}, previously %unrelated concepts.

On the other hand, connections between braces and pre-Lie algebras make it possible to find connections between braces and symmetric brace algebras \cite{bracealgebras}, previously unrelated concepts.

The  connections between braces and pre-Lie algebras go back to the fundamental paper by Rump \cite{Rump}, where he
% passes from finite pre-Lie algebras to finite braces. He
 used the exponential function on a pre-Lie algebra to define a multiplication operation and showed that the obtained structure, together with the same addition as in the pre-Lie algebra, is a brace.
 He also described a passage from $\mathbb R$-braces to pre-Lie rings. For a general case of smooth Lie groups and their associated braces the passage to pre-Lie rings was described recently in \cite{BaiGuo}. The passage from finite braces to finite pre-Lie rings was investigated in \cite{Rump, Lazard, passage, asas}. 
 Moreover,  up to elements of order $p^{4}$ braces of cardinality $p^{n}$ for a prime number $p>n+1$  
 are exactly groups of flows of pre-Lie rings, with the same addition as in the original pre-Lie ring \cite{Newsmok}. 
This makes it possible for braces of this cardinality to be investigated by pre-Lie and
 Lie algebra researchers who do not have experience with braces.

In Section \ref{9} we investigate  when various Lie and pre-Lie rings (constructed in \cite{Shalev, asas}) associated to braces and to powerful groups are powerful. 
In Section \ref{9} we also derive an application to powerful $p$-groups of a given coclass.
 In Section \ref{11}  we show that  braces whose adjoint groups are powerful and  powerful left nilpotent pre-Lie rings  are in one-to-one correspondence and that they are  left and right nilpotent under some assumptions on cardinality.

In Section \ref{10} we consider applications of fixed points theorems applied to the $\lambda _{a}$ maps of a brace (recall that  $\lambda _{a}(b)=a*b+b=a\circ b-a$) which, when the brace has a small number of elements $b$ such that $a*b=0$, would give some information about the structure of the multiplicative group of the brace. Some results about braces whose adjoint groups are powerful are also obtained, as well as a  result about pre-Lie rings whose Lie-rings are powerful (Sections \ref{11}, \ref{12}). In Section \ref{13}, which is connected to Section \ref{9}, we turn again to finite $p$-groups and their graded Lie rings,  and pose some open questions.
Recall that a set $A$ with binary operations $+$ and $* $ is a {\em  left brace} if $(A, +)$ is an abelian group and the following version of distributivity combined with associativity holds.
  \[(a+b+a*b)* c=a* c+b* c+a* (b* c), \space  a* (b+c)=a* b+a* c,\]
for all $a, b, c\in A$; moreover  $(A, \circ )$ is a group, where we define $a\circ b=a+b+a* b$.
In what follows we will use the definition in terms of the operation `$\circ $' presented in \cite{cjo} (see \cite{rump}
for the original definition): a set $A$ with binary operations of addition $+$ and multiplication $\circ $ is a brace if $(A, +)$ is an abelian group, $(A, \circ )$ is a group and for every $a,b,c\in A$
\[a\circ (b+c)+a=a\circ b+a\circ c.\]

Let $(A, +, \circ)$ be a brace, then the group $(A, \circ)$ is called the adjoint group of brace $A$; it is also called the multiplicative group of brace $A$.  The papers \cite{E, W} contain the seminal background results related to involutive set-theoretic solutions.

\section{Lie rings and groups}\label{9}

%At the Conference ``The algebra of the Yang-Baxter equation'' in  Bedlewo on July 11-15, 2022, Jan Okni{\' n}ski asked if the %restriction on the cardinality (namely, the assumption that $p>n+1$) is necessary for any one-to-one correspondence between %braces and pre-Lie rings.
%In this section we consider a related question about a correspondence between Lie rings and groups, and we also deal with non-%associative algebras, including pre 
%Lie-rings.

Classical Lie methods in group theory attach to any nilpotent group $G$ a graded Lie ring
\[
L(G) = \oplus_{i=1, \ldots , c} \; {G_i/G_{i+1}},
\]
where $c$ is the nilpotency class of $G$ and
$\{ G_i \}_{i=1}^{c+1}$ is the lower central series. Set $L_i = L_i(G) = G_i/G_{i+1}$. Each $L_i$ is an abelian group, written additively, and so is $L$.
The elements of $L_i$ are called homogeneous.
In what follows it will be convenient to define $L_i = 0$ for all $i > c$.

We denote group commutators by
\[
(x,y):= x^{-1}y^{-1}xy,
\]
where $x,y \in G$.

A Lie product on $L$ is first defined on homogeneous elements by
\[
[xG_{i+1}, yG_{j+1}] = (x,y) G_{i+j+1},
\]
where $G_{i+j+1} = 1$ if $i+j > c$.

It is easy to see that this is well-defined
(i.e. the product is independent of the representatives $x \in G_i, y \in G_j$), and that $[L_i, L_j] \subseteq L_{i+j}$.

The Lie product of homogeneous elements is extended to arbitrary elements using 
its bi-additivity. This gives $L(G)$ the structure of a nilpotent graded Lie ring of class $c$, 
and if $G$ is finite we also have $|L(G)|=|G|$. Note also that $L(G)$ is generated by $L_1(G)$ as a Lie ring.

In the important special case where $G$ is a finite $p$-group of exponent $p$, $L(G)$ is a Lie algebra
 over the field of size $p$ (and the same holds under the weaker assumption that each $G_i/G_{i+1}$ has exponent dividing $p$).

 It is known that the above construction of the Lie-ring from a nilpotent group is not reversible in general, even for nilpotent groups \cite{Khukhro}.

 In the context of braces, graded structures associated to braces were considered in
\cite{Iyudu} and were shown to be pre-Lie algebras. Notice that by using examples of strongly nilpotent braces constructed in \cite{Lazard,  dsk}  it can be shown that this construction is not reversible, even for strongly nilpotent braces. 

There are several other Lie and pre-Lie rings associated to groups and braces. 

In this section we obtain results on powerful groups and Lie rings,
 related to these Lie and pe-Lie ring constructions from  groups and braces  
 and apply them in the context of the coclass theory.

We need some notation. For a subgroup $H$ of a nilpotent group $G$ and $i \ge 1$ define
\[
L_i(G,H) = (H \cap G_i)G_{i+1}/G_{i+1} \cong (H \cap G_i)/(H \cap G_{i+1}),
\]
and note that $[L_i(G,H), L_j(G,H)] \subseteq L_{i+j}(G,H)$ for all $i,j \ge 1$.

Set $L(G,H) = \oplus_{i=1}^c L_i(G,H)$. Then $L(G,H)$ is a Lie-subring of $L(G)$ which we associate with the subgroup $H$ of $G$. Clearly, for subgroups
$H \le K \le G$ we have $L(G,H) \le L(G,K)$.

\begin{lemma}
With the above notation we have $L(G, G^p) \subseteq p L(G)$.
\end{lemma}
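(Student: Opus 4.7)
The plan is to reduce the statement to showing that for every $i \ge 1$,
$$(G^p \cap G_i)\,G_{i+1} \subseteq (G_i)^p \, G_{i+1},$$
since this exactly says that $L_i(G,G^p) \subseteq pL_i(G)$, and summing over $i$ gives the lemma. The first observation that drives everything is that inside the abelian group $G_i/G_{i+1} = L_i$, the identification is strictly compatible with powers: for any $g \in G_i$,
$$g^p\, G_{i+1} = (g G_{i+1})^p = p \cdot (g G_{i+1}) \in p L_i,$$
so $(G_i)^p G_{i+1}/G_{i+1} = pL_i$.

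The base case $i=1$ is immediate: $L_1 = G/G_2$ is abelian, and since $G^p$ is generated by $p$-th powers, every element of $G^p G_2 /G_2$ is a $\mathbb{Z}$-linear combination of elements of the form $p(gG_2)$, hence in $pL_1$. For $i \ge 2$ I would argue by induction on $i$, working modulo $G_{i+1}$ (so we may replace $G$ by $G/G_{i+1}$ and assume $G$ is nilpotent of class at most $i$, with $G_i$ central). Given $x \in G^p \cap G_i$, write $x = \prod_{j} g_j^{p\epsilon_j}$ with $\epsilon_j \in \{\pm 1\}$, and apply the Hall--Petresco collection formula
$$(ab)^p \equiv a^p b^p \, c_2(a,b)^{\binom{p}{2}} c_3(a,b)^{\binom{p}{3}} \cdots c_{p-1}(a,b)^{\binom{p}{p-1}} \pmod{\gamma_p(\langle a,b\rangle)},$$
where $c_j(a,b) \in \gamma_j(\langle a,b\rangle) \subseteq G_j$. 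Iterating this to combine the $p$-th powers $g_j^{p\epsilon_j}$ into a single $p$-th power, one obtains a congruence of the form
$$x \equiv y^p \cdot z \pmod{G_{i+1}}$$
for some $y \in G$ and some $z$ lying in the subgroup $(G_2)^p (G_3)^p \cdots$; the key point is that for a prime $p$ every $\binom{p}{j}$ with $1 \le j \le p-1$ is divisible by $p$, so each correction term $c_j^{\binom{p}{j}}$ contributes to $G_j^p$ rather than just $G_j$. The residual $\gamma_p$-terms lie in $G_p$, which is small enough in the relevant regime (the paper works with class $<p$) to be absorbed.

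To finish, I would observe that $z \in G^p$ (since it is a product of $p$-th powers of elements of $G_j$ for $j \ge 2$) and $z \in G_2$, so by the induction hypothesis applied to the successive layers $L_2, L_3, \ldots, L_i$, the image of $z$ in each $L_j$ is already in $pL_j$. Combined with the contribution $y^p$, whose image in $L_i$ is $p(yG_{i+1}) \in pL_i$ (and whose lower-degree contributions lie in $G_j$ with $j < i$, but $x$ is zero modulo $G_j$ for $j \le i$, forcing appropriate cancellations by induction on layers), the image $xG_{i+1}$ lies in $pL_i$.

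The main obstacle is the bookkeeping in the Hall--Petresco collection: the $p$-th power of a product of $k$ elements picks up commutator corrections of many weights, and one must track that each correction contributes either to a $p$-multiple in its own layer or to deeper terms already controlled by induction. The divisibility $p \mid \binom{p}{j}$ for $1 \le j \le p-1$ is the crucial arithmetic ingredient that makes all these corrections land in $pL(G)$, and the nilpotency class restriction implicit in the paper's setting eliminates the $\gamma_p$ residues that the naive Hall--Petresco identity leaves behind.
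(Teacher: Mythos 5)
Your reduction to $(G^p \cap G_i)G_{i+1} \subseteq (G_i)^p G_{i+1}$ and the base case $i=1$ are fine, and the Hall--Petrescu collection with $p \mid \binom{p}{j}$ for $1 \le j \le p-1$ does correctly control the commutator correction terms. The gap is in the final step, where you assert that the contribution of $y^p$ to $L_i$ is $p(yG_{i+1}) \in pL_i$. That identification is only legitimate when $y \in G_i$; otherwise $yG_{i+1}$ is not an element of $L_i = G_i/G_{i+1}$ at all, and nothing in the collection process forces the collected element $y$ to lie in $G_i$ just because $y^p$ does. When a $p$th power ``drops'' into a deeper layer of the lower central series ($y \notin G_i$ but $y^p \in G_i$), the image of $y^p$ in $L_i$ need not be divisible by $p$, and the ``appropriate cancellations by induction on layers'' you invoke do not materialize: the vanishing of the images of $y^p$ and $z$ in $L_j$ for $j<i$ gives no control over the image in $L_i$, because there is no degree-preserving additive map $G \to L(G)$ that would let you track contributions of a single group element across several layers.

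This is not a repairable bookkeeping issue. Take $p$ odd and $G = \langle a,b \mid a^{p^2}=b^p=1,\ b^{-1}ab=a^{1+p}\rangle$, the nonabelian group of order $p^3$ and exponent $p^2$. Then $G_2=\langle a^p\rangle\cong C_p$ and $G_3=1$, so $L_1$ and $L_2$ are elementary abelian and $pL(G)=0$; yet $a^p\in G^p\cap G_2$ gives a nonzero element of $L_2(G,G^p)$, so $(G^p\cap G_2)G_3 \not\subseteq (G_2)^pG_3$. Thus the statement your reduction requires fails already at $i=2$ without additional hypotheses on $G$ (such as exponent $p$, or injectivity of multiplication by $p$ on each $L_i$, which rules out the drop-down phenomenon). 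For comparison, the paper's own justification is the single assertion that elements of $G^p$ are products of $p$th powers ``which are multiples of $p$ in $L(G)$''; your computation is essentially an honest attempt to make that sentence precise, and it founders on exactly the point that sentence glosses over. If you want a correct statement to aim for, you should isolate a hypothesis guaranteeing $y^p \in G_{i+1} \Rightarrow y \in G_{i+1}\cdot(\text{something controlled})$ for the elements produced by collection, since the binomial divisibility alone cannot supply it.
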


\begin{proof}
Recall that $L(G) = \oplus_{i=1}^c L_i(G)$ is the graded Lie ring associated with $G$, where $c$ is the nilpotency class of $G$.
We have $G^p = \langle g^p: g \in G \rangle$, the (normal) subgroup generated by the $p$th powers of the elements of $G$.
Since every element of $G^p$ is a product of $p$th powers, which are multiples of $p$ in $L(G)$, we obtain $L(G,G^p) \subseteq p L(G)$.

\end{proof}

Powerful $p$-groups, introduced by Lazard in his seminal work on $p$-adic analytic groups (see the book \cite{DDMS} by Dixon, du Sautoy, Mann and Segal), were studied further by Lubotzky and Mann \cite{LM1, LM2}.

We need some notation. Let $G$ be a finite group. For a positive integer $k$ define $G^k = \langle g^k : g \in G \rangle$, the subgroup generated by all $k$th powers in $G$. Clearly, $G^k \lhd G$.

We say that a finite $p$-group $G$ is {\em powerful} if $p>2$ and $G/G^p$ is abelian (namely, if $G' \le G^p$), or if $p=2$ and
$G/G^4$ is abelian (namely, if $G' \le G^4$).

Let $L$ be a finite Lie ring whose size is a power of $p$. Suppose $p > 2$.
We say that $L$ is powerful if $L/pL$ is an abelian Lie ring, namely if $L^2 \le pL$. In the case $p=2$ we say that $L$ is powerful if $L/4L$ is an abelian Lie ring, namely if $L^2 \le 4L$.

\begin{corollary}\label{Cor22} Let $G$ be a finite $p$-group and let $L(G)$ be its graded Lie ring. Suppose $G$ is powerful. Then so is $L(G)$.
\end{corollary}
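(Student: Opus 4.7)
The plan is to combine the preceding lemma $L(G,G^p) \subseteq pL(G)$ with the defining formula of the Lie bracket on $L(G)$, which translates group commutators into Lie brackets grade by grade. Since the bracket on $L(G)$ is bi-additive, the ideal $[L(G), L(G)]$ is generated (as an additive subgroup) by brackets $[u,v]$ of homogeneous elements $u \in L_i(G)$ and $v \in L_j(G)$, so it suffices to show that every such bracket lies in $pL(G)$ (respectively $4L(G)$ when $p=2$).

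For the case $p > 2$, I would pick homogeneous $u = x G_{i+1}$ and $v = y G_{j+1}$ and use the definition $[u,v] = (x,y) G_{i+j+1} \in L_{i+j}(G)$, where $(x,y)$ denotes the group commutator. The hypothesis that $G$ is powerful says $G' \leq G^p$, so $(x,y) \in G' \cap G_{i+j} \subseteq G^p \cap G_{i+j}$, which exhibits $[u,v]$ as an element of $L_{i+j}(G, G^p) \subseteq L(G, G^p)$. The lemma then gives $[u,v] \in pL(G)$, and bi-additivity upgrades this to the inclusion $[L(G), L(G)] \subseteq pL(G)$, i.e.\ $L(G)$ is powerful in the sense defined in the paper.

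For $p = 2$ the argument is identical in structure, but one needs the stronger inclusion $L(G, G^4) \subseteq 4L(G)$ to replace the lemma; this is proved in exactly the same way, since every element of $G^4$ is a product of fourth powers, and each fourth power is a multiple of $4$ in the additive structure of $L(G)$. The hypothesis $G' \leq G^4$ then places $(x,y)$ inside $G^4 \cap G_{i+j}$, giving $[u,v] \in L(G, G^4) \subseteq 4L(G)$ as needed.

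The only point that really requires attention is making sure the grading is tracked correctly: the commutator $(x,y)$ lives in $G_{i+j}$, and its image in $L(G)$ is the component in $L_{i+j}$, not a spurious term of lower degree. This is automatic from the definition of the Lie bracket on the graded ring, so the argument is short; the real content of the corollary lies already in the preceding lemma, and the $p=2$ variant of that lemma is the only small addition required.
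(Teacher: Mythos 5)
Your proof is correct and follows essentially the same route as the paper: reduce to homogeneous brackets, observe that the group commutator lands in $G' \cap G_{i+j} \subseteq G^p$, and invoke the preceding lemma $L(G,G^p) \subseteq pL(G)$. The only cosmetic differences are that the paper phrases the first step as the identity $L(G)^2 = \oplus_{i\ge 2} L_i(G) = L(G,G')$ rather than working bracket-by-bracket, and that you spell out the $p=2$ variant of the lemma explicitly where the paper leaves it implicit.
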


\begin{proof}
Note that, for any nilpotent group $G$ we have $L(G)^2 \le L(G,G')$.

Indeed, $L(G)^2 = \oplus_{i=2}^c L_i(G)$ and for $i \ge 2$ we have
\[
L_i(G,G') \cong (G' \cap G_i)/(G' \cap G_{i+1}) = G_i/G_{i+1}.
\]

Since $G$ is powerful we have $G' \le G^p$. It follows that
\[
L(G)^2 \le L(G,G') \le L(G,G^p) \le p L(G),
\]
so $L(G)$ is a powerful Lie ring.

\end{proof}

\begin{lemma} Let $L$ be a powerful Lie ring of size $p^n$. Then $L^{n+1}=0$.
\end{lemma}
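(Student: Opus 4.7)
The plan is to propagate the powerful condition $L^2 \subseteq pL$ through the lower central series by iterated commutator expansion, and then cash in the fact that the additive exponent divides $p^n$.

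First I would prove by induction on $k \ge 0$ the containment $L^{k+1} \subseteq p^k L$, assuming $p>2$ (so $L^2 \subseteq pL$). The base cases $k=0,1$ are immediate. For the inductive step, expand
\[
L^{k+2} = \sum_{\substack{i+j=k+2 \\ i,j \ge 1}} [L^i, L^j],
\]
apply the inductive hypothesis $L^i \subseteq p^{i-1}L$ and $L^j \subseteq p^{j-1}L$, pull the scalars out of the bracket, and then use $L^2 \subseteq pL$ once at the end:
\[
[L^i, L^j] \subseteq p^{i+j-2}[L,L] = p^{i+j-2}L^2 \subseteq p^{i+j-1}L = p^{k+1}L.
\]
Summing over the relevant pairs $(i,j)$ gives $L^{k+2} \subseteq p^{k+1}L$, completing the induction.

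Next, since the additive group of $L$ has order $p^n$, its exponent divides $p^n$, so $p^n L = 0$. Applying the inductive bound at $k=n$ yields $L^{n+1} \subseteq p^n L = 0$, which is the claim.

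Finally, for the parity-$2$ case the only modification is that powerfulness gives $L^2 \subseteq 4L$, so the same induction produces $L^{k+1} \subseteq 2^{2k}L$, and since $|L|=2^n$ forces $2^n L = 0$, a fortiori $L^{n+1} \subseteq 2^{2n}L = 0$. The argument presents no real obstacle; the only point requiring care is tracking the exponents of $p$ through the expansion of $L^{k+2}$, which is a standard commutator bookkeeping.
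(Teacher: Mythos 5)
Your proof is correct and follows essentially the same route as the paper: an induction establishing $L^{i+1}\subseteq p^{i}L$, followed by the observation that $p^{n}L=0$ since $|L|=p^{n}$. The only cosmetic difference is that the paper takes $L^{k+2}=[L,L^{k+1}]$ (the usual lower central series recursion), which collapses your sum over all pairs $(i,j)$ to a single term, but your fuller decomposition and the explicit treatment of $p=2$ are both fine.
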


\begin{proof}
We claim that $L^{i+1} \le p^i L$ for all $i \ge 1$. The proof is by induction on $i$. The base case $i=1$ is equivalent to the assumption that $L$ is powerful (i.e. $L^2 \le pL$).

For the inductive step, suppose $L^{i+1} \le p^i L$. Then
$$L^{(i+1)+1} = [L, L^{i+1}] \le [L, p^i L] = p^i [L,L] = p^i L^2 \le p^i \cdot p L = p^{i+1} L,$$
as required.

\end{proof}

\begin{lemma}\label{24} Let $L$ be a powerful Lie ring whose size is a $p$th power. Then $L^{i+1} \le p^i L$ for all $i \ge 1$.
\end{lemma}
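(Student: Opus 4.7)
The plan is to observe that this statement is exactly the inductive claim that already appears (embedded) in the proof of the preceding lemma, so the proof is a straightforward induction on $i$ that I would carry out essentially verbatim; only the cosmetic adjustment for the prime $p=2$ deserves a line of its own.

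The argument proceeds by induction on $i$. For the base case $i = 1$, I would invoke the definition of a powerful Lie ring directly: if $p > 2$ the definition is $L^2 \le pL$, which is exactly what is needed; if $p = 2$, the definition gives the stronger inclusion $L^2 \le 4L$, which in particular implies $L^2 \le 2L = pL$ since $4L \subseteq 2L$ (higher multiples of $p$ generate smaller subgroups). So in either case $L^2 \le pL$, establishing $i=1$.

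For the inductive step, assume $L^{i+1} \le p^i L$. Then by the bi-additivity of the Lie bracket (which implies $[x, py] = p[x,y]$ for any $x,y \in L$, and hence $[L, pM] \subseteq p[L,M]$ for any subring $M$), I would compute
\[
L^{i+2} \;=\; [L, L^{i+1}] \;\subseteq\; [L, p^{i}L] \;=\; p^{i}[L,L] \;=\; p^{i} L^{2} \;\subseteq\; p^{i} \cdot pL \;=\; p^{i+1} L,
\]
where the last inclusion is the already-established base case $L^2 \le pL$. This completes the induction.

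There is no real obstacle here — the only subtlety is remembering that the $p=2$ convention in the definition of powerful gives an \emph{a priori} stronger inclusion at the base step, which is then weakened to the form $L^2 \le pL$ before feeding it into the induction; the rest is a formal manipulation relying only on the biadditivity of the Lie bracket and the fact that $p^i L$ is an ideal. In fact, once Lemma \ref{24} is in hand, the previous lemma (the bound $L^{n+1} = 0$ when $|L|=p^n$) becomes an immediate corollary using $p^n L = 0$.
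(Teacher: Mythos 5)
Your proof is correct and follows essentially the same induction as the paper's: base case from the definition of powerful, inductive step via $L^{i+2} = [L, L^{i+1}] \subseteq [L, p^iL] = p^i L^2 \subseteq p^{i+1}L$. The extra care you take with the $p=2$ convention (weakening $L^2 \le 4L$ to $L^2 \le 2L$) is a small, correct refinement that the paper glosses over.
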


\begin{proof}
The proof is by induction on $i$. The base case $i=1$ is equivalent to the assumption that $L$ is powerful (i.e. $L^2 \le p L$). For the inductive step, suppose $L^{i+1} \le p^i L$. Then $L^{(i+1)+1} = [L, L^{i+1}] \le [L, p^i L] = p^i [L,L] = p^i L^2 \le p^i \cdot p L = p^{i+1} L,$ as required.

\end{proof}

We now present applications to the coclass theory.
Recall that the coclass of a group of order $p^n$
and nilpotency class $c$ is defined by $cc(G):= n-c$.
For background on the coclass theory, see the book
\cite{LeMc} by Leedham-Green and McKay and the references therein. Recall that the proof of the coclass conjectures applied the theory of $p$-adic analytic pro-$p$ groups and of finite powerful $p$-groups (see \cite{Sh-Z} and \cite{Sh2}).

\begin{proposition} Let $G$ be a group of order $p^n$ and coclass $b$. Suppose $G$ is powerful. Then $n \le 2b+1$.
\end{proposition}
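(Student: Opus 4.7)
The plan is to show that the nilpotency class $c$ of $G$ satisfies $c \le b+1$. Since $b = n-c$ by definition, this is equivalent to $n \le 2b+1$, so it suffices to prove this bound on the class.

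First I would pass to the associated graded Lie ring $L = L(G) = \oplus_{i=1}^c L_i$. By Corollary \ref{Cor22}, $L$ is a powerful Lie ring, and clearly $|L| = p^n$. Applying Lemma \ref{24} with $i = c-1$ yields
\[
L^c \subseteq p^{c-1} L.
\]
Since $G$ has nilpotency class $c$, so does $L$, and therefore $L^c \ne 0$ while $L^{c+1}=0$.

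Next I would combine this with the grading. The inclusions $[L_i,L_j]\subseteq L_{i+j}$ give, by a straightforward induction, $L^k \subseteq \oplus_{i \ge k} L_i$, and in particular $L^c \subseteq L_c$ (as $L_i = 0$ for $i>c$). Since the decomposition $L = \oplus_i L_i$ is direct, intersecting with $p^{c-1}L = \oplus_i p^{c-1}L_i$ gives
\[
L^c \subseteq L_c \cap p^{c-1}L = p^{c-1} L_c.
\]
Hence $p^{c-1} L_c \ne 0$, so there exists an element of $L_c$ of additive order at least $p^c$. As $L_c$ is an abelian $p$-group, this forces $|L_c| \ge p^c$.

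Finally I would read off the bound on the class from the coclass. Writing $|L_i|=p^{n_i}$, we have $n_i \ge 1$ for $1 \le i \le c$ and $\sum_{i=1}^c n_i = n$. Therefore
\[
b = n - c = \sum_{i=1}^c (n_i - 1) \ge n_c - 1 \ge c - 1,
\]
so $c \le b+1$ and thus $n = b + c \le 2b+1$, as required. The only mildly delicate point is the interaction between the $p$-adic filtration coming from powerfulness and the Lie grading; everything else is bookkeeping.
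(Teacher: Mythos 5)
Your argument is correct and follows essentially the same route as the paper: pass to $L(G)$, invoke Corollary \ref{Cor22} and Lemma \ref{24} to get $L^c \subseteq p^{c-1}L \ne 0$, deduce that some homogeneous component has order at least $p^c$, and count. The only (valid) refinement is that you use the grading to pin that component down to $L_c$ itself, whereas the paper settles for an unspecified $L_j$ with $p^{c-1}L_j \ne 0$; both yield the same bound.
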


\begin{proof}
Denote the nilpotency class of $G$ by $c$ and let $L = L(G)$. Then
$L = \oplus_{i=1}^c L_i$ has size $p^n$ and class $c = n-b$. Since $G$ is powerful, so is $L$ by Corollary \ref{Cor22}

Applying Lemma \ref{24}, we see that $L^{i+1} \subseteq p^i L$ for all $i \ge 1$. In particular $L^c \subseteq p^{c-1} L$. Since $L^c \ne 0$ it follows that $p^{c-1} L \ne 0$. Therefore there exists $j \in [1,c]$ such that $p^{c-1}L_j \ne 0$.

This shows that the exponent of $L_i = G_i/G_{i+1}$ as an additive group is at least $p^c$. In particular this yields
$$|G_j/G_{j+1}| \ge p^c.$$
For $i \in [1,c]$ set $|L_i|=p^{d_i}$. Then $d_j \ge c$ and $\sum_{i=1}^c d_i = n$. Therefore $$\sum_{i=1}^c (d_i-1) = n-c =b.$$
Since $d_i-1 \ge 0$ for all $i \in [1,c]$ we have
$$n-b-1 \le c-1 \le d_j-1 \le \sum_{i=1}^c (d_i-1) = n-c =b.$$
We conclude that $n \le 2b+1$,
as required.

\end{proof}

Note also that in \cite{TW} the class of powerfully nilpotent groups, a special subclass of powerful groups, was investigated.

\begin{lemma}\label{pw}
Let $G$ be a finite powerful  $p$-group, and let $x\in G^{p}$ and $y\in G$. Then
\[ (x,y) \in G^{p^{2}}.\]
\end{lemma}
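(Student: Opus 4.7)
The plan is to reduce the claim to the well-known inclusion $[G^{p}, G] \leq G^{p^{2}}$, which is a standard feature of powerful $p$-groups. Once this inclusion is established, since $x \in G^{p}$ and $y \in G$, the commutator $(x,y) = x^{-1} y^{-1} x y$ lies in $[G^{p}, G]$, and hence in $G^{p^{2}}$, giving the conclusion immediately.

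To establish $[G^{p}, G] \leq G^{p^{2}}$, I would rely on two standard structural facts for a finite powerful $p$-group $G$ (for $p$ odd): every element of $G^{p^{i}}$ can be written as a single $p^{i}$-th power $g^{p^{i}}$, and $(G^{p})^{p} = G^{p^{2}}$. Using the first fact, I would write $x = a^{p}$ for some $a \in G$, reducing the question to showing $(a^{p}, y) \in G^{p^{2}}$. I would then apply a Hall-Petrescu style commutator expansion
\[
(a^{p}, y) \equiv (a, y)^{p} \pmod{\text{higher commutators of } a, (a,y)}.
\]
Because $G$ is powerful, $(a, y) \in G' \leq G^{p}$, and so $(a, y)^{p} \in (G^{p})^{p} = G^{p^{2}}$. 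The remaining higher-commutator terms, being iterated commutators involving at least one factor from $G'$ multiplied out with binomial coefficients divisible by $p$, can be argued to lie in $G^{p^{2}}$ as well.

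A cleaner route, which I would probably prefer, is simply to invoke the Lubotzky–Mann theorem that in a powerful $p$-group, the subgroup $G^{p}$ is \emph{powerfully embedded} in $G$, meaning $[G^{p}, G] \leq (G^{p})^{p}$. Combined with $(G^{p})^{p} = G^{p^{2}}$, this gives the result in one line. The main potential obstacle in a self-contained approach is controlling the higher-commutator terms in the Hall–Petrescu expansion and verifying that they do not escape $G^{p^{2}}$; this is exactly the content that the ``powerfully embedded'' machinery from Dixon–du Sautoy–Mann–Segal packages up, so for brevity I would cite that result rather than redo the commutator bookkeeping.
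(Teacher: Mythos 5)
Your proposal is correct and follows essentially the same route as the paper: the paper also reduces to the powerfully-embedded machinery (citing Shalev's Lemma 3.1 together with the Lubotzky--Mann/DDMS fact that $G^{p^i}$ is powerfully embedded) to get $(G^p,G)=(G')^p\le (G^p)^p=G^{p^2}$, which is exactly your preferred one-line argument. The Hall--Petrescu sketch you offer as a self-contained alternative is not needed and is not what the paper does.
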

\begin{proof}
Recall that $(x,y) = x^{-1}y^{-1}xy$. Let $G$ be a finite powerful $p$-group and let $N \lhd G$. We say that $N$ is powerfully embedded in $G$ if $p>2$ and $(N,G) \le N^p$ or $p=2$ and $(N,G) \le G^4$. Note that, since $N' = (N,N) \le (N,G)$, this implies that $N$ is powerful.

By \cite[Lemma 3.1]{Shalev}, if $G$ is a powerful $p$-group and $M,N \lhd G$ are powerfully embedded in $G$, then $$(N^{p^i},M^{p^j}) = (N, M)^{p^{i+j}}$$ for all $i,j \ge 0$.
It is well-known that, for $G$ as above, the power subgroups $G^{p^i}$ ($i \ge 0$), and the subgroups $\gamma_k = \gamma_k(G)$ ($k \ge 1$) forming the lower central series of $G$, are all powerfully embedded in $G$ (see \cite{LM1} or \cite{DDMS}).

Applying this to $N=G^p$ and $M=G$ we obtain
\[
(x,y) \in (G^p,G) = (G,G)^p = (G')^p \le (G^p)^p = G^{p^2},
\]
as required.
\end{proof}

Another consequence of \cite[Lemma 3.1]{Shalev} is the following.

\begin{lemma}\label{i,j}
Let $G$ be a finite powerful $p$-group and let $\{\gamma_k\}_{k \ge 1}$ be its lower central series. Then we have ($G$ replaced by $\gamma$ below):
\[
(\gamma_k^{p^i},\gamma_l^{p^j}) = \gamma_{k+l}^{p^{i+j}}
\]
for all $i,j \ge 0$ and $k,l \ge 1$.
\end{lemma}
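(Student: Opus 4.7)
The plan is to apply Shalev's Lemma 3.1 (cited just above in the proof of Lemma~\ref{pw}) with $N = \gamma_k$ and $M = \gamma_l$. The structural inputs required are that both $\gamma_k$ and $\gamma_l$ are powerfully embedded in $G$, which is a standard fact in the theory of powerful $p$-groups (see \cite{LM1, DDMS}) and was already invoked in the preceding lemma. Once these inputs are in place, that lemma yields immediately
\[
(\gamma_k^{p^i},\gamma_l^{p^j}) = (\gamma_k,\gamma_l)^{p^{i+j}}.
\]

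Next, I would identify $(\gamma_k,\gamma_l)$ with $\gamma_{k+l}$ inside $G$. One direction, namely $(\gamma_k,\gamma_l) \le \gamma_{k+l}$, is Philip Hall's classical commutator identity and is valid in any group. The reverse inclusion $\gamma_{k+l} \le (\gamma_k,\gamma_l)$ is where the powerful hypothesis really enters. I would argue it by induction on $l$. The base case $l = 1$ is the defining identity $\gamma_{k+1} = (\gamma_k,G)$. For the inductive step I would apply the three-subgroup lemma to the triple $(G,\gamma_k,\gamma_l)$, combined with the induction hypothesis and the fact that in a powerful $p$-group the subgroup $(\gamma_k,\gamma_l)$ is again normal and powerfully embedded, which keeps the recursion inside the setting of Shalev's Lemma 3.1. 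Substituting back then gives
\[
(\gamma_k^{p^i},\gamma_l^{p^j}) = (\gamma_k,\gamma_l)^{p^{i+j}} = \gamma_{k+l}^{p^{i+j}},
\]
which is the claim.

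I expect the main obstacle to be the reverse inclusion $\gamma_{k+l} \le (\gamma_k,\gamma_l)$, since in a general group this can fail (for instance with $k = l = 2$ in a free nilpotent group of class $4$, where $(\gamma_2,\gamma_2)$ can be strictly smaller than $\gamma_4$). The powerfulness of $G$ together with Shalev's Lemma 3.1 is precisely what rescues the equality, and the technical work consists in threading the three-subgroup lemma through a powerful $p$-group argument rather than doing a bare inductive commutator manipulation. Everything else in the proof is a direct invocation of the cited results.
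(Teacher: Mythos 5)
Your first step---invoking Shalev's Lemma 3.1 with $N=\gamma_k$ and $M=\gamma_l$, both powerfully embedded---is precisely the content of the paper's one-line proof, and it correctly yields $(\gamma_k^{p^i},\gamma_l^{p^j})=(\gamma_k,\gamma_l)^{p^{i+j}}$. Up to that point you and the paper agree.

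The gap is in your second step, and it is not one that can be repaired. You rightly isolate the inclusion $\gamma_{k+l}\le(\gamma_k,\gamma_l)$ as the delicate point, but your claim that powerfulness (plus a three-subgroup-lemma induction) rescues it is false. Take $p$ odd and let $G=\langle a\rangle\rtimes\langle b\rangle$ with $a,b$ of order $p^{5}$ and $b^{-1}ab=a^{1+p}$. Then $(a,b)=a^{p}$, so $G'=\langle a^{p}\rangle\le G^{p}$ and $G$ is powerful; a direct computation gives $\gamma_m=\langle a^{p^{m-1}}\rangle$ for $m\ge 2$, so $G$ has class $5$. Here $\gamma_2$ is cyclic, hence $(\gamma_2,\gamma_2)=1$, while $\gamma_4=\langle a^{p^{3}}\rangle\ne 1$; thus $\gamma_{k+l}\le(\gamma_k,\gamma_l)$ already fails for $k=l=2$, and with it the displayed identity of the Lemma at $i=j=0$. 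Your proposed induction would therefore break at its first nontrivial step, since the three-subgroup lemma only ever produces upper bounds of the form $(\gamma_k,\gamma_l)\le\gamma_{k+l}$, never the reverse containment. What Shalev's Lemma 3.1 actually delivers is $(\gamma_k^{p^i},\gamma_l^{p^j})=(\gamma_k,\gamma_l)^{p^{i+j}}$ together with the one-sided containment $(\gamma_k,\gamma_l)^{p^{i+j}}\le\gamma_{k+l}^{p^{i+j}}$. In fairness, the paper's own proof performs only your first step and silently identifies $(\gamma_k,\gamma_l)$ with $\gamma_{k+l}$; by trying to justify that identification honestly you have in fact exposed that, as literally stated, the equality in the Lemma does not hold, and the statement should be read with $(\gamma_k,\gamma_l)^{p^{i+j}}$ on the right-hand side.
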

The deduction of the above lemma from \cite[Lemma 3.1]{Shalev} is immediate.

Next, we consider the finite analogs of Lazard's Lie ring of $p$-adic analytic pro-$p$ groups (cf. \cite{Shalev} for details). A finite powerful $p$-group $G$ of exponent $p^e$ and $d$ generators is called {\em uniform} if $|G^{p^i}:G^{p^{i+1}}| = p^d$ for all $0 \le i < e$. Given such a group, let $0 \le i \le e/3$. Consider the
map $q: G^{p^i}/G^{p^{2i}} \to G^{p^{2i}}/G^{p^{3i}}$ defined by
\[
q(xG^{p^{2i}}) = x^{p^i}G^{p^{3i}}.
\]
Then $q$ is a well-defined group isomorphism, and $$G^{p^i}/G^{p^{2i}} \cong G^{p^{2i}}/G^{p^{3i}} \cong C_{p^i} \times \cdots \times C_{p^i},$$ the direct product of $d$ cyclic groups of order $p^i$.

Then the abelian group $L = G^{p^i}/G^{p^{2i}}$ becomes a Lie ring by defining for $x,y \in G^{p^i}$,
\[
x G^{p^i} + y G^{p^i} := xy G^{p^i},
\]
and
\[
[x G^{p^i} , y G^{p^i}] := q^{-1}((x,y) G^{p^{2i}}),
\]
where $(x,y) = x^{-1}y^{-1}xy$, the group commutator. We denote this Lie ring by $M = M_i(G)$.

\begin{proposition}
$M$ is a powerful Lie ring.
\end{proposition}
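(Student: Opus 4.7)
The plan is to verify directly that $M^2 \le pM$ in the case $p > 2$ (the $p = 2$ case is analogous, with $p$ replaced by $4$), since that is precisely what it means for the Lie ring $M$ to be powerful.

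First I would identify $pM$ concretely inside $M = G^{p^i}/G^{p^{2i}}$. Because the addition on $M$ is induced by multiplication in $G$, we have $p \cdot (xG^{p^{2i}}) = x^pG^{p^{2i}}$ for $x \in G^{p^i}$, so $pM = (G^{p^i})^pG^{p^{2i}}/G^{p^{2i}}$. Since $G$ is uniform, $G^{p^i}$ is itself uniform, with $(G^{p^i})^p = G^{p^{i+1}}$; and since $i \ge 1$ we have $G^{p^{2i}} \le G^{p^{i+1}}$, so $pM$ is precisely the subgroup $G^{p^{i+1}}/G^{p^{2i}}$ of $M$. (The case $i = 0$ gives $M = 0$, trivially powerful.)

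Next, by unpacking the definition of the bracket, for $x, y \in G^{p^i}$ we have $[xG^{p^{2i}}, yG^{p^{2i}}] = zG^{p^{2i}}$, where $z \in G^{p^i}$ satisfies $z^{p^i} \equiv (x,y) \pmod{G^{p^{3i}}}$. Because $q$ is an isomorphism, applying it to the subgroup $G^{p^{i+1}}/G^{p^{2i}}$ of its domain yields $q(G^{p^{i+1}}/G^{p^{2i}}) = (G^{p^{i+1}})^{p^i}G^{p^{3i}}/G^{p^{3i}} = G^{p^{2i+1}}/G^{p^{3i}}$. Therefore the bracket lies in $pM$ if and only if $z \in G^{p^{i+1}}$, if and only if $z^{p^i} \in G^{p^{2i+1}}$, if and only if $(x,y) \in G^{p^{2i+1}}$. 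The problem thus reduces to showing $(G^{p^i}, G^{p^i}) \le G^{p^{2i+1}}$.

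Finally, applying Lemma \ref{i,j} with $k = l = 1$ and both exponents equal to $p^i$ gives $(G^{p^i}, G^{p^i}) = \gamma_2(G)^{p^{2i}} = (G')^{p^{2i}}$. Powerfulness of $G$ yields $G' \le G^p$, and iterating the uniform-group identity $(H^p)^p = H^{p^2}$ gives $(G^p)^{p^{2i}} = G^{p^{2i+1}}$. Hence $(G')^{p^{2i}} \le G^{p^{2i+1}}$, which completes the argument. For $p = 2$ the same strategy goes through using $G' \le G^4$, which yields $(x,y) \in G^{2^{2i+2}}$ and hence $M^2 \le 4M$. The main point requiring care is the exponent bookkeeping at each stage: verifying that $p^{i+1} \le p^{2i}$ and $p^{2i+1} \le p^{3i}$ so the claimed correspondences between subgroups of $G$ and of the quotient make sense, and that the $p^i$-th power map realizes the isomorphism $q$ on the relevant sub-quotients. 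The group-theoretic content itself is essentially just Lemma \ref{i,j} together with standard structural facts about uniform powerful $p$-groups.
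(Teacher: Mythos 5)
Your argument is correct and follows essentially the same route as the paper: both proofs reduce the claim to the identity $(G^{p^i})'=(G')^{p^{2i}}\le (G^p)^{p^{2i}}=G^{p^{2i+1}}$ and then pull this back through $q^{-1}$ to conclude $M^2\le pM$. The only cosmetic difference is that you obtain the key identity from Lemma \ref{i,j} (with $k=l=1$) while the paper cites formula (1) of \cite[p.~277]{Shalev} directly, and you spell out the identification $pM=G^{p^{i+1}}/G^{p^{2i}}$ and the exponent bookkeeping somewhat more explicitly.
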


\begin{proof}
Formula (1) in \cite[p. 277]{Shalev} shows that
\[
(G^{p^i})' = (G')^{p^{2i}} \le (G^p)^{p^{2i}} = G^{p^{2i+1}}.
\]
This implies
\[
q^{-1}((G^{p^i})'G^{p^{3i}}/G^{p^{3i}}) \le q^{-1}(G^{p^{2i+1}}G^{p^{3i}}/G^{3i}) = q^{-1}(G^{p^{2i+1}}/G^{3i}) = G^{p^{i+1}}/G^{p^{2i}}.
\]
This shows that $M^2 \le p M$, hence $M$ is a powerful Lie ring.
\end{proof}
 We will  say that a pre-Lie ring $(A, +, \cdot )$ is powerful if the associated Lie ring $(A, +, [.,.])$ with $[a,b]=a\cdot b-b\cdot a$ is powerful. 
In \cite{TW}, on page $797$ (lines $7-9$), it was shown that Lie rings associated with powerful groups by using the Lazard's correspondence are powerful Lie rings.
Notice that this implies the following:
\begin{proposition}  Let $p$ be a prime number and let $n$ be a natural number such that $n+1<p$. 
 Let $(A, +, \cdot)$ be a finite left nilpotent pre-Lie ring of cardinality $p^{n}$, and let  $(A, \circ)$ be its group of flows (notice that $(A, \circ)$ is  well defined, even if $A$ does not have a filtration by  \cite{passage}).
 Suppose that the group $(A, \circ )$ is powerful, then the pre-Lie ring $(A, +, \cdot )$ is powerful. 
\end{proposition}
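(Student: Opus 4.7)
The plan is to identify the commutator Lie ring of the pre-Lie ring $(A,+,\cdot)$ with the Lazard Lie ring of its group of flows $(A,\circ)$, and then invoke the cited result from \cite{TW}. Recall that by definition the pre-Lie ring $(A,+,\cdot)$ is powerful iff the associated Lie ring $(A,+,[\cdot,\cdot])$ with $[a,b]=a\cdot b-b\cdot a$ is powerful. So the whole task is to transfer powerfulness across the passage from the group $(A,\circ)$ to the Lie ring $(A,+,[\cdot,\cdot])$.

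First I would set things up using the Lazard correspondence. Since $(A,\circ)$ has cardinality $p^n$ with $n+1<p$, the group is of class strictly less than $p$, so the Baker--Campbell--Hausdorff formula is applicable and defines, out of the group $(A,\circ)$, a Lie ring $L(A,\circ)$ on the same underlying set $A$. Since the group of flows construction is precisely the inverse of this Lazard correspondence in the pre-Lie setting (this is exactly the content of the identification used throughout \cite{Rump, Lazard, passage, passage2}, where the operation $\circ$ is built from $\cdot$ via an exponential/BCH-type formula), the Lie ring $L(A,\circ)$ coincides with the commutator Lie ring $(A,+,[\cdot,\cdot])$ of the pre-Lie ring. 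In particular, the additive group of $L(A,\circ)$ is $(A,+)$ and its Lie bracket is $a\cdot b-b\cdot a$.

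Next I would apply the \cite{TW} result quoted just above the proposition: for a finite powerful $p$-group whose class is less than $p$, the Lie ring obtained via the Lazard correspondence is a powerful Lie ring. Since we are assuming $(A,\circ)$ is powerful and the hypothesis $n+1<p$ ensures the Lazard setup is valid, this gives that $L(A,\circ)$ is powerful, i.e.\ $L(A,\circ)^{2}\subseteq pL(A,\circ)$ (or $\subseteq 4L(A,\circ)$ if $p=2$; but the hypothesis $n+1<p$ with $n\geq 1$ rules out $p=2$ except in trivial cases).

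Translating back under the identification of the previous step, $[A,A]\subseteq pA$ in the commutator Lie ring $(A,+,[\cdot,\cdot])$, which is exactly the condition that $(A,+,[\cdot,\cdot])$ is powerful, hence $(A,+,\cdot)$ is a powerful pre-Lie ring by definition. The main obstacle here is really the identification step: one must be careful that the BCH-derived Lazard Lie bracket on $(A,\circ)$ matches the commutator $a\cdot b - b\cdot a$ of the pre-Lie product, rather than some twisted or higher-order variant. However, this is by now a standard calculation — the first-order term of BCH is precisely the commutator of the pre-Lie product, and the higher-order BCH corrections automatically land in $pA$ under the cardinality assumption $n+1<p$, so they do not affect the powerfulness condition $L^{2}\subseteq pL$.
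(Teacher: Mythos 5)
Your argument is essentially identical to the paper's own proof: both rest on the identification of the commutator Lie ring $(A,+,[\cdot,\cdot])$ with the Lazard Lie ring of the group of flows $(A,\circ)$ (the paper cites \cite{AG} and the remark at the end of Section 4 of \cite{Lazard} for this), followed by the result from \cite{TW} that the Lazard Lie ring of a powerful $p$-group is powerful. The proposal is correct and takes the same route, merely spelling out the BCH/Lazard identification in slightly more detail than the paper does.
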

\begin{proof} Notice that this follows from the known fact that the group of flows of the pre-Lie ring $(A, +, \cdot)$ is isomorphic  to the group obtained by applying the Lazard's correspondence to the Lie ring $(A, +, [. , .])$. Therefore, the Lie ring associated to the group $(A, \circ )$ by using the Lazard correspondence is isomorphic to the Lie ring 
$(A, +, [.,.])$. 
For more details see \cite{AG} (or see the  Remark at the end of section $4$ in  \cite{Lazard}). 
\end{proof} 
Combining this result with Theorem $6$ from \cite{passage} we get the following:
\begin{proposition}\label{345}
 Let $p>2$ be a prime number and let $n$ be a natural number such that $n+1<p$. 
 Let $(A, +, \circ )$ be a brace of cardinality $p^{n}$. Suppose that $A$ is a strongly nilpotent brace of nilpotency index 
 $k<p$.
 Suppose that the group $(A, \circ )$ is powerful. Then the pre-Lie ring $(A, +, \cdot)$ be defined as in Theorem $6$ in \cite{passage} is powerful. 
 Recall that the addition in  $(A, +, \cdot)$ is the same as the addition in the brace $A$, and the multiplication  
$\cdot$ is defined as 
\[a\cdot b= \sum_{i=0}^{p-2} \xi ^{p-1-i} (\xi ^{i}pa)*b)),\]
 where $\xi=\gamma ^{p^{p-1}}$ where $\gamma $ is a primitive root modulo $p^{p}$.
\end{proposition}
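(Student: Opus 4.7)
My plan is to reduce this statement to the proposition immediately preceding it, by identifying the adjoint group $(A,\circ)$ with the group of flows of a suitable rescaling of the pre-Lie ring $(A,+,\cdot)$.

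First, since $A$ is a strongly nilpotent brace of nilpotency index $k<p$ and cardinality $p^{n}$ with $n+1<p$, Theorem 6 of \cite{passage} produces the (left nilpotent) pre-Lie ring $(A,+,\cdot)$ given by the formula in the statement. Following the paragraph ``Connection with the Group of Flows'' preceding this proposition, I would then introduce the rescaled product $a\cdot_{1}b:=-(1+p+\cdots+p^{n})\,a\cdot b$, and recall from \cite{passage} that applying the group-of-flows construction to $(A,+,\cdot_{1})$ recovers precisely the original brace $(A,+,\circ)$. Note that $(A,+,\cdot_{1})$ is still left nilpotent, since rescaling a pre-Lie product by a nonzero integer multiplies every iterated left-nested product by a corresponding power of the same integer.

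Next, I would invoke the preceding proposition with the pre-Lie ring $(A,+,\cdot_{1})$: its group of flows is $(A,\circ)$, which is powerful by hypothesis, and the cardinality hypothesis $n+1<p$ is satisfied, so that proposition yields that $(A,+,\cdot_{1})$ is a powerful pre-Lie ring. By the definition of powerful pre-Lie ring recalled just before that proposition, this means that the associated Lie ring $(A,+,[\,,\,]_{\cdot_{1}})$ with bracket $[a,b]_{\cdot_{1}}=a\cdot_{1}b-b\cdot_{1}a$ is powerful, i.e.\ $[A,A]_{\cdot_{1}}\subseteq pA$.

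Finally, I would transfer powerfulness from $\cdot_{1}$ back to $\cdot$. Because $u:=-(1+p+\cdots+p^{n})\equiv -1\pmod{p}$ is coprime to $p$, multiplication by $u$ is an automorphism of the finite abelian $p$-group $(A,+)$. Since $[a,b]_{\cdot_{1}}=u\,[a,b]_{\cdot}$ for all $a,b\in A$, the additive subgroup generated by the $\cdot$-brackets is the image under this automorphism of the subgroup generated by the $\cdot_{1}$-brackets, so the two subgroups coincide. In particular the former is contained in $pA$, and hence $(A,+,\cdot)$ is powerful. The only slightly delicate bookkeeping is precisely this last observation that rescaling the pre-Lie product by a $p$-adic unit preserves powerfulness of the associated Lie ring; everything else is a direct combination of Theorem 6 of \cite{passage} with the preceding proposition.
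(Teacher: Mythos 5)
Your proof is correct and follows essentially the same route as the paper, whose own argument is simply to combine the immediately preceding proposition with Theorem 6 of \cite{passage}. Your additional bookkeeping — passing through the rescaled product $\cdot_{1}$ whose group of flows is literally $(A,\circ)$, checking that rescaling preserves left nilpotency and the pre-Lie identity, and then transferring powerfulness back to $\cdot$ via the $p$-adic unit $-(1+p+\cdots+p^{n})$ — makes explicit a step the paper leaves implicit.
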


We show that a similar result holds for construction from Theorem $11$ \cite{asas}:

\begin{proposition}\label{describingconstruction}
Let $(A, +, \circ )$ be a brace of cardinality $p^{n}$, where $p$ is a prime number such that  $p>n+1$. Suppose that the group $(A, \circ )$ is powerful. Let $ann (p^{2})$ be defined as before, so $ann(p^{2})=\{a\in A: p^{2}a=0\}$. Then the pre Lie-ring $(A/ann(p^{2}), +, \bullet)$ defined as in Theorem $11$ in \cite{asas} is powerful.
\end{proposition}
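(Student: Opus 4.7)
The plan is to show that the Lie bracket $[a,b]_\bullet := [a]\bullet[b] - [b]\bullet[a]$ on $A/ann(p^2)$ lies in $p(A/ann(p^2))$ for every $a, b \in A$. Using the identity $[a]\bullet[b] = [\wp^{-1}((pa)\cdot b)]$ recorded in the proof of Theorem~\ref{dc} together with the additivity of $\wp^{-1}$ modulo $ann(p^2)$ from Lemma~\ref{3}, the task reduces to proving
\[
w := (pa)\cdot b - (pb)\cdot a \in p^2 A + ann(p),
\]
since then writing $w = p^2 u + t$ with $t \in ann(p)$ and $px = w$ yields $p^2(x-pu) = pt = 0$, so $[\wp^{-1}(w)] = [pu] \in p(A/ann(p^2))$.

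First I would pass from $\cdot$ to $*$ modulo $p^2 A$. Applying Corollary~\ref{777} to each summand of the defining formula $(pa)\cdot b = \sum_{i=0}^{p-2}\xi^{p-1-i}((\xi^i pa)*b)$ with the role of $a$ played by $pa \in pA$ and $m = \xi^i$, every error term is a word in which $pa$ appears at least twice and therefore lies in $p^2 A$ by Proposition~\ref{b}. Since $\xi^{p-1} \equiv 1 \pmod{p^p}$ together with $p^n A = 0$ makes $\xi^{p-1}$ act trivially on $A$, the leading contributions collapse to $(p-1)((pa)*b)$; and since $(pa)*b \in pA$ one has $p((pa)*b) \in p^2 A$, so
\[
(pa)\cdot b \equiv -((pa)*b) \pmod{p^2 A},
\]
and hence $w \equiv -((pa)*b - (pb)*a) \pmod{p^2 A}$.

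Finally I would exploit the powerfulness of $(A,\circ)$ to bound $(pa)*b - (pb)*a$. By Lemma~\ref{i,j} applied with $k=l=i=j=1$, $((A,\circ)^p, (A,\circ)^p) = ((A,\circ)')^{p^2}$. Since $(A,\circ)' \subseteq (A,\circ)^p = pA$ by powerfulness, and by Remark~\ref{555} every element of $pA$ is a single $p$-th power under $\circ$, each $h \in (A,\circ)'$ has the form $h = g^{\circ p}$, so $h^{\circ p^2} = g^{\circ p^3} \in p^3 A$; this gives $((A,\circ)')^{p^2} \subseteq p^3 A$ and hence the group commutator $k := ((pa),(pb))_\circ$ lies in $p^3 A$. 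Writing $(pa)\circ(pb) = (pb)\circ(pa)\circ k$ and unpacking with $x\circ y - y\circ x = x*y - y*x$ together with $x\circ y = x+y+x*y$ gives
\[
(pa)*(pb) - (pb)*(pa) = k + ((pb)\circ(pa))*k,
\]
which lies in $p^3 A$ because $p^3 A$ is an ideal (Lemma~\ref{citepassage}). The brace identity $(pa)*(pb) = p((pa)*b)$ (and analogously on the other side) then forces $p((pa)*b - (pb)*a) \in p^3 A$, hence $(pa)*b - (pb)*a \in p^2 A + ann(p)$, and combining with the previous step yields $w \in p^2 A + ann(p)$, as required. The main technical obstacle is securing the sharp bound $k \in p^3 A$: the crude bound $((A,\circ)^p,(A,\circ)^p) \subseteq ((A,\circ)^p)^p = p^2 A$ coming from merely applying powerfulness to the subgroup $pA$ would only give $w \in pA + ann(p)$, which is insufficient, so the refined commutator calculus provided by Lemma~\ref{i,j} is essential.
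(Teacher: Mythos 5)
Your argument is correct, and it reaches the conclusion by a genuinely different route from the paper's. Writing $G=(A,\circ)$, the paper works directly with the defining sum $\sum_{i}\xi^{p-1-i}(\xi^{i}pa)*b$: it uses Lemma \ref{pw} (that is, $(G^{p},G)\le G^{p^{2}}=p^{2}A$) to replace each summand $(\xi^{i}pa)*b$ by $b*(\xi^{i}pa)$ modulo $p^{2}A$, then left distributivity $b*(\xi^{i}pa)=\xi^{i}p(b*a)$ to collapse the sum to $(p-1)[b*a]$, and finally plain powerfulness $(G,G)\le G^{p}=pA$ to conclude $[a*b]-[b*a]\in[pA]$. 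You instead collapse $(pa)\cdot b$ to $-((pa)*b)$ modulo $p^{2}A$ via Corollary \ref{777} and Proposition \ref{b} (so no commutation inside the sum is needed), and then handle the remaining antisymmetrization $(pa)*b-(pb)*a$ by multiplying by $p$, turning it into the honest difference $(pa)*(pb)-(pb)*(pa)$, which you control by the sharper commutator identity $(G^{p},G^{p})=(G')^{p^{2}}\subseteq p^{3}A$ from Lemma \ref{i,j}, with the inclusion into $p^{3}A$ secured correctly via Remark \ref{555} and $A^{\circ p^{3}}=p^{3}A$. Your closing observation is exactly the right diagnosis: the crude bound $(G^{p},G^{p})\subseteq(G^{p})^{p}=p^{2}A$ would only give $w\in pA+ann(p)$ and is insufficient, so the refined commutator calculus is essential to your route. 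In summary, the paper avoids Lemma \ref{i,j} at the price of two separate commutator estimates and a symmetrization step; you use a single, stronger commutator estimate and lean more heavily on the pre-Lie machinery already developed in the first part of the paper. Both proofs are complete.
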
 
\begin{proof} We use the notation of Theorem $11$ \cite{asas}.
 Recall that
\[[x]\bullet [y]=[\wp^{-1} ( \sum_{i=0}^{p-2} \xi ^{p-1-i} (\xi ^{i}px)*y))].\]
for $x,y\in A$.
We need to show that
\[[[x],[y]]=[x]\bullet [y]-[y]\bullet [x]\in [pA].\]
Since $(A, \circ )$ is a powerful group, we can apply Lemma \ref{pw} to the group $G=(A, \circ)$, then  by using our previous notation in this paper we have $G^{p}=A^{\circ p}$ and $G^{p^{2}}=A^{\circ p^{2}}$. By Theorem \ref{pw},
if $x\in A^{\circ p}$ and $y\in A$ then
\[ x^{-1} \circ y^{-1}\circ x \circ y\in A^{\circ p^{2}}.\]
Therefore $x\circ y=(y\circ x)\circ q(x,y)=y\circ x+q(x,y)+(y\circ x)*q(x,y)$ for some $q\in A^{\circ p^{2}}$.
By Lemma $15$ \cite{passage} we have
$A^{\circ p}=pA$ and $A^{\circ p^{2}}=p^{2}A$.
Therefore, $q(x,y)\in p^{2}A$, and by the defining relations of a brace we get $(y\circ x)*q(x,y)\in p^{2}A$.
It follows that
$x\circ y-y\circ x\in p^{2}A,$ and hence
\[x*y-y*x\in p^{2}A,\] for $x\in A^{\circ p}=pA$ and $y\in A$.

We denote $[pA]=\{p[a]:a\in A\}$. Note that $[pa]=[a+\cdots +a]=[a]+\cdots +[a]=p[a]$.
We can now continue our proof. Let $a,b\in A$ and apply the above for $x=\xi ^{i} pa$, $y=b$.
We obtain:
\[[a]\bullet [b]=[\wp^{-1} ( \sum_{i=0}^{p-2} \xi ^{p-1-i} (\xi ^{i}pa)*b)]=\]
\[=[\wp^{-1} ( \sum_{i=0}^{p-2} \xi ^{p-1-i} b*(\xi ^{i}pa) + t)],\] for some $t\in p^{2}A$. Notice that $[\wp^{-1}(t)]\in [\wp^{-1}(p^{2}A)]\subseteq [pA]$.
Moreover  \[[\wp^{-1} ( \sum_{i=0}^{p-2} \xi ^{p-1-i} b*(\xi ^{i}pa)]=[\sum_{i=0}^{p-2} \xi ^{p-1}( b*a)]=(p-1)[b*a].\]
Therefore, \[[a]\bullet [b]-(p-1)[b]*[a]\in [pA].\]
Similarly, \[[b]\bullet [a]-(p-1)[a]*[b]\in [pA].\]

Note that since $A$ is powerful we get $a*b-b*a\in pA$. Consequently, \[[[x],[y]]=[x]\bullet [y]-[y]\bullet [x]\in [pA].\]
\end{proof}

\section{Braces admitting automorphisms with few fixed points}\label{10}

Lie methods were applied by Higman, Kostrikin-Kreknin, Shalev, Khukhro, Medvedev and others to study nilpotent groups admitting an automorphism with few fixed points. More details on this and on Lie methods in group theory can be found in \cite[Chapter 1]{DSS}.

A group automorphism is called {\em regular} if it has only one fixed point (namely the identity element).
It is easy to see that a finite group with a regular automorphism of order $2$ is abelian. What if we replace $2$ by an arbitrary prime $p$?

Higman showed in \cite{H1} that a nilpotent group with a regular automorphism of prime order $p$ has nilpotency class bounded above by $h(p)$
for a suitable function $h$ (now called the Higman function).
Higman's method was to pass to the graded Lie ring $L = L(G)$ and to prove the analogous result for $L$. He showed that
$h(p) \ge (p^2-1)/2$ for all odd primes $p$, but did not obtain
any explicit upper bound on $h(p)$.

Such an upper bound was subsequently provided by Kostrikin and Kreknin, showing that, for all $p > 2$,
\[
h(p) \le {{(p-1)^{2^{p-1}-1}} \over {p-2}}.
\]
As for automorphisms of order $p$ with few fixed points, a result of Medvedev \cite[Theorem 1]{Medvedev} shows that
a group of order $p^n$ admitting an automorphism of order $p$ with $p^m$ fixed points has a normal subgroup $N$ such that $|G:N| \le g(p^m)$ for a suitable function $g$, and the nilpotency class of $N$ is at most $h(p)$.
As an immediate consequence we obtain:
\begin{corollary}\label{121}
Let $A$ be a skew brace of size $p^n$, whose additive group  admits an automorphism of order $p$ with $p^m$ fixed points. Then the additive group of $A$ has a normal subgroup $N$ such that $|A:N| \le g(p^m)$ for a suitable function $g$, and the nilpotency class of $N$ is at most $h(p)\le {{(p-1)^{2^{p-1}-1}} \over {p-2}}$.
\end{corollary}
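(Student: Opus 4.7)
The plan is to apply Medvedev's theorem \cite{Medvedev} directly to the additive group of $A$, so the work is essentially a translation of the hypotheses rather than a new construction. Since $A$ is a skew brace of size $p^n$, the additive group $(A,+)$ is a group of order $p^n$. By assumption, this group admits an automorphism $\phi$ of order $p$ whose fixed point set has size $p^m$. These are exactly the hypotheses of the Medvedev theorem quoted in the paragraph immediately preceding the statement.

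Applying that theorem to the group $G=(A,+)$ and the automorphism $\phi$, I obtain a normal subgroup $N$ of $(A,+)$ satisfying $|A:N|\le g(p^m)$ for Medvedev's function $g$, and such that the nilpotency class of $N$ is at most $h(p)$, where $h$ is the Higman function. The Kostrikin--Kreknin upper bound
\[
h(p)\le \frac{(p-1)^{\,2^{p-1}-1}}{p-2},
\]
also recalled above, then gives the explicit estimate claimed in the corollary.

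I expect no serious obstacle here. The one point worth emphasizing in the write-up is that the conclusion only concerns the additive group: nothing about the circle (multiplicative) operation $\circ$ of the skew brace enters the argument, so the corollary is genuinely just the specialization of Medvedev's result to the case where the ambient $p$-group happens to carry a skew brace structure. The role of the skew brace hypothesis is essentially only notational, fixing the set $A$ and the size $p^n$; for a brace (as opposed to a skew brace) the additive group is abelian and the statement becomes vacuous, which is consistent with the formulation in terms of skew braces.
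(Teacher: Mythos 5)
Your proposal is correct and is exactly the paper's (implicit) argument: the corollary is stated there as an immediate consequence of Medvedev's theorem applied to the additive group of the skew brace, combined with the Kostrikin--Kreknin bound on the Higman function $h(p)$. Your closing observation that the skew brace structure plays no role beyond supplying a $p$-group of order $p^n$, and that the statement is only of interest when the additive group is non-abelian, is accurate.
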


While the above results deal with automorphisms of prime order,  automorphisms of prime power order were subsequently studied.

In \cite{Shalev} finite $p$-groups admitting an automorphism of order $p^k$ with $p^m$ fixed points were studied. It was shown there that the derived length of such groups $G$ is bounded above by $f(p^k,p^m)$ for an explicit function $f$.

In particular, applying this for inner automorphisms $g \mapsto x^{-1}g x$ induced by a fixed element $x \in G$, we conclude that the derived length of a finite $p$-group $G$ is bounded above by an explicit (increasing) function of $|C_G(x)|$, the size of the centralizer of $x$ in $G$.

This result is somewhat counter-intuitive, implying that a finite $p$-group which has an element of small centralizer has small derived length; it extends a classical result of Alperin from 1962 in the case where the conjugating element $x$ has order $p$ \cite{Al}.  Notice that a fruitful discussion about centralizers of elements  in braces and skew braces appears in \cite{Facchini}.

A subsequent result of Medvedev \cite[Theorem 1.2]{Medvedev2} shows that
a group of order $p^n$ admitting an automorphism of order $p^k$ with $p^m$ fixed points has a normal subgroup $S$ such that $|G:S| \le g(p^m,p^k)$ for a suitable function $g$, and the derived length of $S$ is at most $f(p^k)$ for a suitable function $f$.

Recall also that it was shown by Rump that adjoint groups of finite braces are soluble (see line 14 on page 147 of \cite{rump}). This is not the case for skew braces. 
 As an application of \cite{Shalev} we get the following:
\begin{corollary} \label{131}
Let $A$ be a skew  brace of size $p^n$, whose additive group admits an automorphism of order $p^k$ with $p^m$ fixed points. Then  the derived length of $(A, \circ )$ is bounded by $f(p^{k}, p^{m})$ with an explicit function $f$ given in \cite{Shalev}. Namely
$f(p^{k}, p^{m})=2mp^{k}$ max$\{2^{p^{k}-1}+$ log$_{2}(k+1),$  log$_{2}(m+1)\}+mp^{k}+ $ log$_{2}(mp^{k})$.
\end{corollary}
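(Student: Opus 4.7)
The plan is to reduce the statement to a direct application of the main theorem of \cite{Shalev} applied to the finite $p$-group $(A,\circ)$, in close parallel with how Corollary \ref{121} follows from Medvedev's theorem applied to $(A,+)$.

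First I would observe that since $|A|=p^n$, both the additive group $(A,+)$ and the multiplicative group $(A,\circ)$ are $p$-groups of order $p^n$. A skew brace automorphism $\phi$ of $A$ is, by definition, a bijection of the underlying set compatible with both $+$ and $\circ$. Hence $\phi$ is simultaneously an automorphism of $(A,+)$ and of $(A,\circ)$, and the fixed point set $\{a\in A : \phi(a)=a\}$ is the same set regardless of which operation we use. The hypothesis that $\phi$ has order $p^k$ and $p^m$ fixed points with respect to the additive structure therefore transfers verbatim to $\phi$ viewed as an automorphism of $(A,\circ)$.

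Next I would invoke the main theorem of \cite{Shalev}: any finite $p$-group $G$ admitting an automorphism of order $p^k$ with $p^m$ fixed points has derived length bounded above by
\[
2mp^{k}\max\{2^{p^{k}-1}+\log_{2}(k+1),\,\log_{2}(m+1)\}+mp^{k}+\log_{2}(mp^{k}).
\]
Taking $G=(A,\circ)$ together with the automorphism $\phi$ yields exactly the bound $f(p^{k},p^{m})$ claimed in the corollary, and the proof is complete.

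The only non-routine step, and the main obstacle, is the passage from the additive to the multiplicative side: to apply Shalev's theorem one needs an automorphism of $(A,\circ)$, not merely an abstract automorphism of $(A,+)$. This is automatic once $\phi$ is understood to be a skew brace automorphism (the natural reading in parallel with Corollary \ref{121}), because the underlying bijection preserves both operations and so has the same order and the same fixed point set on each. If instead one were handed only an abstract additive automorphism, one would first need to verify, using the skew brace axioms, that $\phi$ conjugates the maps $\lambda_{a}$ correctly so as to preserve $\circ$ as well, which is a genuine extra condition rather than a formal consequence of the hypotheses.
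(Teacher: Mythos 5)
Your argument is internally consistent, but it rests on reinterpreting the hypothesis in a way the paper does not intend, and under the intended reading the key step fails. The statement says the \emph{additive group} of $A$ admits an automorphism of order $p^k$ with $p^m$ fixed points; the remark immediately following Corollary \ref{131} in the paper makes clear that the intended instances are the maps $\lambda_a(b)=-a+a\circ b$, which are automorphisms of $(A,+)$ but in general are \emph{not} automorphisms of $(A,\circ)$ (the map $a\mapsto\lambda_a$ is a homomorphism from $(A,\circ)$ into $\mathrm{Aut}(A,+)$, not into $\mathrm{Aut}(A,\circ)$). So you cannot assume the given automorphism is a skew brace automorphism: that strengthened hypothesis would exclude precisely the examples the corollary is designed for, and, as you yourself concede in your final paragraph, it is a genuinely extra condition. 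Consequently the ``transfers verbatim to $(A,\circ)$'' step is not available under the stated hypothesis.

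What the stated hypothesis does yield, by a direct application of the main theorem of \cite{Shalev} to the finite $p$-group $(A,+)$ (which for a skew brace need not be abelian), is the bound $f(p^k,p^m)$ on the derived length of the \emph{additive} group --- exactly parallel to the way Corollary \ref{121} concludes about the additive group via Medvedev's theorem. The occurrence of $(A,\circ)$ in the printed conclusion appears to be a slip; the multiplicative-group version is handled separately by the corollary that follows, where one applies \cite{Shalev} to the inner automorphism of $(A,\circ)$ induced by an element $a$ of order $p^k$ whose centralizer has size $p^m$. In short: either take the hypothesis literally and conclude about $(A,+)$, or keep the conclusion about $(A,\circ)$ and change the hypothesis to concern an automorphism of the multiplicative group. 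Your proposal keeps both by silently upgrading the hypothesis to a brace automorphism, which proves a different, and for the paper's purposes much less useful, statement.
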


%Recall also that it was shown by Rump that adjoint groups of finite braces are soluble (see line 14 on page 147 of \cite{rump}).

Skew braces were introduced in 2017 by Guareni and Vendramin \cite{GV}. The additive group of a skew brace need not be abelian, and it is abelian only for skew braces which are braces. The above results also hold if we apply them to multiplicative groups of braces instead of additive groups. The reason for considering additive groups in Corollary \ref{121} and Corollary \ref{131} is that it is possible to apply them for the  maps $\lambda _{a}(b)=-a+a\circ b$ for $a,b\in A$, where $A$ is a skew brace. It is known that, for each $a\in A$, $\lambda _{a}$ is an automorphism of the additive group of the skew brace $A$ (see Corollary 1.10 in \cite{GV}). Note that  $\lambda _{a}(b)$ gives the first coordinate of $r(a,b)$ where $r$ is the set-theoretic solution of the Yang-Baxter equation associated to the skew brace $A$ (see \cite{GV}).

%In the context of Hopf-Galois extensions fixed points of group actions on group rings are used for constructing the Hopf algebras %associated to given Hopf-Galois extensions, for example to Hopf-Galois extensions obtained from skew braces.
%Note however that it does not seem to be immediately related to the above results.

%In the context of braces, the above results could be applied, for example, to the inner automorphisms
% $f_{a}(c)=a^{-1}\circ c\circ a$ of the multiplicative group of a brace $A$ to provide information about elements which %commute with a given element $a\in A$.  Notice that a fruitful discussion about centralizers of elements  in braces and skew %braces appears in \cite{Facchini}.

An an immediate consequence of results from \cite{Shalev} we obtain:

\begin{corollary}
Let $k, n$ be
 natural numbers and let $p$ be a prime number.  Let $(A, +, \circ )$ be a brace of cardinality $p^{n}$, and let $a\in A$ be such that the product $a\circ a \circ \cdots \circ a$ of $p^{k}$ copies of $a$ is zero (in other words $a$ has order $p^{k}$ in the group $(A, \circ )$). Let $C(a)=\{r\in A: r \circ c=c \circ r\}$
 then $C(a)$ is a subgroup of $(A, \circ)$ of cardinality $p^{m}$ for some $m$. Then the derived lenght of $(A, \circ )$ is bounded by $f(p^{k}, p^{m})$ with an explicit function $f$ given in \cite{Shalev}. Namely
$f(p^{k}, p^{m})=2mp^{k}$ max$\{2^{p^{k}-1}+$ log$_{2}(k+1),$  log$_{2}(m+1)\}+mp^{k}+ $ log$_{2}(mp^{k})$.
\end{corollary}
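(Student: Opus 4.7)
The plan is to apply the main theorem of \cite{Shalev} to the finite $p$-group $G = (A, \circ)$, taking as the automorphism the inner automorphism $\varphi_a$ induced by the element $a$. First, I would observe that $(A, \circ)$ is a group of cardinality $p^n$, hence a finite $p$-group, so the setting of \cite{Shalev} applies. Define $\varphi_a : A \to A$ by $\varphi_a(x) = a^{-1} \circ x \circ a$, which is an automorphism of $(A, \circ)$.

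Next I would check the two hypotheses needed to invoke the theorem from \cite{Shalev}. For the order of $\varphi_a$: since $a^{\circ p^k} = 0$ (the identity of $(A, \circ)$), the inner automorphism $\varphi_a$ satisfies $\varphi_a^{p^k} = \mathrm{id}$, so its order divides $p^k$. If $\varphi_a$ is trivial, then $a$ is central, $m = n$, and the bound holds trivially (or can be absorbed into the general statement, since the formula $f(p^k,p^m)$ makes sense and dominates the derived length of $(A,\circ)$ in that case). Otherwise the order of $\varphi_a$ is a positive power $p^j$ of $p$ with $j \le k$. For the fixed points: $\varphi_a(x) = x$ is equivalent to $a \circ x = x \circ a$, so $\mathrm{Fix}(\varphi_a) = C(a)$, a subgroup of $(A,\circ)$ of order $p^m$ by assumption.

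Finally I would invoke the theorem of \cite{Shalev} verbatim: a finite $p$-group admitting an automorphism of order $p^j$ with $p^m$ fixed points has derived length at most $f(p^j, p^m)$, with the explicit function $f$ given in the statement. Combined with the monotonicity of $f$ in its first argument (visible directly from the closed form, since each summand is nondecreasing in $k$), this yields the derived length bound $f(p^k, p^m)$ for $(A,\circ)$, as claimed. The only mildly subtle point is the monotonicity check and the triviality case $\varphi_a = \mathrm{id}$; the rest is a direct reduction of a brace-theoretic hypothesis to a group-theoretic one via the inner automorphism of the adjoint group.
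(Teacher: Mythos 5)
Your proof is correct and follows exactly the route the paper intends: the corollary is stated there as an immediate consequence of the main theorem of \cite{Shalev} applied to the inner automorphism $x\mapsto a^{-1}\circ x\circ a$ of the adjoint group $(A,\circ)$, whose fixed-point set is precisely the centralizer $C(a)$. Your additional remarks on the degenerate case $\varphi_a=\mathrm{id}$ and on monotonicity of $f$ in its first argument are sensible housekeeping that the paper leaves implicit.
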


\begin{question} What can be said about the structure braces whose multiplicative groups have small derived length?
\end{question}

%The following result, Corollary \ref{Cor17} (ii), can be applied to automorphisms $\lambda _{a}$ of additive groups of braces, %since their additive groups are abelian and hence powerful.

In \cite{Shalev} powerful $p$-groups are applied in the study of groups admitting an automorphism with few fixed points.
On p.274 of \cite{Shalev} it is shown that a finite $p$-group with an automorphism of order $p^k$ with $p^m$ fixed points has characteristic rank $r \le m p^k$; this means that, for every characteristic subgroup $H$ of $G$ we have $d(H) \le m p^k$, where $d(H)$ is the minimal number of generators of the group $H$.

\begin{corollary}\label{Cor17}

Let $G$ be a finite $p$-group admitting an automorphism $\alpha$ of order $p^k$ with $p^m$ fixed points.
Then

(i) $d(G) \le  m p^k$.

(ii) Suppose $G$ is powerful.
Then $G$ is a product of at most $m p^k$ cyclic subgroups.

\end{corollary}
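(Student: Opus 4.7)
The plan is to deduce both parts directly from results already invoked in the paper, rather than re-developing the underlying machinery.

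For part (i), I would apply the result quoted from \cite{Shalev} (p.~274) in the paragraph just preceding the corollary: under the stated hypothesis on $\alpha$, every characteristic subgroup $H$ of $G$ satisfies $d(H) \le m p^k$. Since $G$ itself is of course a characteristic subgroup of $G$, specializing $H = G$ immediately yields $d(G) \le m p^k$. No new computation is required; this is a one-line consequence of the cited bound on the characteristic rank.

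For part (ii), I would combine (i) with a standard structural theorem for finite powerful $p$-groups: if $G$ is powerful with $d(G) = d$, then $G$ can be written as a product $G = \langle x_1\rangle \cdots \langle x_d\rangle$ of $d$ cyclic subgroups (this is a basic feature of powerful $p$-groups due to Lubotzky--Mann, proved in \cite{LM1} and also available in \cite{DDMS}). Applying this with $d = d(G) \le m p^k$ gives a factorization of $G$ as a product of at most $m p^k$ cyclic subgroups, which is precisely the content of (ii).

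The only substantive step is the invocation of the Lubotzky--Mann product-of-cyclics theorem for powerful $p$-groups; the rest is bookkeeping. There is no genuine obstacle here, since both ingredients are already in the literature cited by the paper. If a self-contained proof of the product-of-cyclics statement were desired, the main idea would be to pick generators $x_1,\dots,x_d$ of $G$ whose images generate $G/\Phi(G)$, and then use Lemma \ref{i,j} (or the general machinery that powers and commutators in powerful $p$-groups behave as in abelian groups modulo higher powers) to show inductively that every element of $G$ can be written as $x_1^{a_1}\cdots x_d^{a_d}$; but for the purposes of this corollary it suffices to cite the existing result.
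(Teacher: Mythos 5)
Your proposal is correct and follows exactly the paper's own route: part (i) is the specialization $H=G$ of the characteristic-rank bound from \cite{Shalev} quoted just before the corollary, and part (ii) invokes the same Lubotzky--Mann fact (a powerful $p$-group on $d$ generators is a product of $d$ cyclic subgroups, \cite[Corollary 2.8]{DDMS}). No differences worth noting.
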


\begin{proof}

Part (i) follows immediately.
Part (ii) follows from part (i) since a powerful $p$-group generated by $d$ elements is a product of $d$ cyclic subgroups,
see \cite[Corollary 2.8]{DDMS}.
\end{proof}

Notice that Corollary \ref{Cor17} (ii), can be applied to automorphisms $\lambda _{a}$ of additive groups of braces, since their additive groups are abelian and hence powerful.
 
%Corollary \ref{Cor17} (i) can be applied to automorphisms $\lambda _{a}$ of skew braces to give bounds of the characteristic %rank of the additive groups of skew braces. This can be then used to show that additive groups of abelian braces have large %powerful subgroups, under some  additional assumptions on the element $a\in A$.

Recall that F. Catino an R. Rizo introduced circle algebras, and W. Rump introduced $\mathbb F$-braces, which are linear spaces over a field $\mathbb F$.
 Let $\mathbb F_{p}$ denote the field of $p$ elements, where $p$ is a prime number. 
 It is known that a finite brace $A$ is an $\mathbb F_{p}$-brace if and only if the additive group of $A$ is the 
 direct sum of some numbers of copies of the cyclic group of cardinality $p$. Notice that an $\mathbb F_{p}$-brace $(A, +, \circ )$ of cardinality $p^{n}$ is a direct sum of $n$ cyclic subgroup of cardinality $p$. 
 
\begin{proposition}\label{16} Let $p$ be a prime number, and $n$ and $k$ be natural numbers. 
Let $(A, +, \circ )$ be an $\mathbb F_{p}$- brace of cardinality $p^{n}$, and let $a\in A, a\neq 0$ be such that 
the product $a\circ a \circ \cdots \circ a$ of $p$ copies of $a$ is zero
 (in other words $a$ has order $p$ in the group $(A, \circ )$).  
 Then there are at least $p^{\frac n{p}}$ elements $b\in A$ such that $a*b=0$.

\end{proposition}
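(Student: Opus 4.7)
My plan is to reduce this to a linear-algebra statement about unipotent elements in $GL_n(\mathbb{F}_p)$.

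First, I would translate the condition: since $a*b = a\circ b - a - b$, we have $a*b = 0$ if and only if $\lambda_a(b) = b$, where $\lambda_a(b) := -a + a\circ b$. So the set I must lower-bound is exactly $\mathrm{Fix}(\lambda_a) \subseteq A$. Recall that $\lambda \colon (A,\circ) \to \mathrm{Aut}(A,+)$, $a \mapsto \lambda_a$, is a group homomorphism (a defining property of braces); hence from $a^{\circ p}=0$ I obtain $\lambda_a^{\,p} = \mathrm{id}$.

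Because $A$ is an $\mathbb{F}_p$-brace, $(A,+) \cong \mathbb{F}_p^n$ and $\lambda_a \in GL_n(\mathbb{F}_p)$ satisfies $\lambda_a^{\,p} = 1$. Working in characteristic $p$, the Frobenius identity gives
\[
(\lambda_a - I)^p \;=\; \lambda_a^{\,p} - I \;=\; 0,
\]
so $\lambda_a - I$ is nilpotent of nilpotency degree at most $p$. Thus in Jordan form $\lambda_a$ is a direct sum of unipotent Jordan blocks $J_{k_1}, \ldots, J_{k_r}$ with $k_1 + \cdots + k_r = n$ and each $k_i \le p$. Each block contributes a one-dimensional kernel to $\lambda_a - I$, so $\dim \mathrm{Fix}(\lambda_a) = r$.

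Finally, the constraint $k_i \le p$ forces $r \ge \lceil n/p \rceil \ge n/p$, giving
\[
|\{b \in A : a*b = 0\}| \;=\; |\mathrm{Fix}(\lambda_a)| \;=\; p^{\,r} \;\ge\; p^{\lceil n/p \rceil} \;\ge\; p^{\,n/p},
\]
as required. The one step that needs any care is verifying that the order of $\lambda_a$ divides $p$ (a standard property of $\lambda$-maps together with the exponent of $a$), and justifying the Jordan-block bound via $(\lambda_a - I)^p = 0$; everything else is bookkeeping.
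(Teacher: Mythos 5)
Your proof is correct, but it takes a genuinely different and more self-contained route than the paper. You both begin the same way: identify $\{b : a*b=0\}$ with the fixed-point set of the additive automorphism $\lambda_a$, and use $\lambda_{a\circ b}=\lambda_a\lambda_b$ together with $a^{\circ p}=0$ to get $\lambda_a^p=\mathrm{id}$. The paper then finishes by quoting Corollary \ref{Cor17}, i.e.\ the characteristic-rank theorem from \cite{Shalev} (a finite $p$-group with an automorphism of order $p^k$ and $p^m$ fixed points has rank at most $mp^k$), applied to the elementary abelian group $(A,+)$ to conclude $n\le mp$. You instead prove exactly the special case needed: since $(A,+)\cong\mathbb{F}_p^n$ and $\lambda_a\in GL_n(\mathbb{F}_p)$ commutes with $I$, the identity $(\lambda_a-I)^p=\lambda_a^p-I=0$ makes $\lambda_a-I$ nilpotent of degree at most $p$, so the Jordan blocks have size at most $p$, there are at least $\lceil n/p\rceil$ of them, and each contributes one dimension to the fixed space. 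This yields the (marginally sharper) bound $p^{\lceil n/p\rceil}$ without any external input beyond linear algebra, at the cost of being specific to the elementary abelian setting; the paper's route is a one-line application of a much more general group-theoretic theorem and also makes transparent why the $\mathbb{F}_p$-brace hypothesis is being used (it is what makes $d(A,+)=n$). Your argument is complete as written; the only step worth spelling out in a final version is that any automorphism of $(\mathbb{F}_p^n,+)$ is automatically $\mathbb{F}_p$-linear, and one can even avoid Jordan form by noting that for a nilpotent $N$ with $N^p=0$ on an $n$-dimensional space one has $n=\sum_{i=0}^{p-1}\dim\bigl(N^iV/N^{i+1}V\bigr)\le p\dim\ker N$.
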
\label{16}
\begin{proof} If $a*b=0$ for all $b\in A$ then the result holds, so suppose that there is $b\in A$ such that $a*b\neq 0$. Observe that then  $\lambda _{a}^{k}(b)\neq b$ for $k<p$,
 since $\lambda _{b}(a)=a+k a*b+p$ where $p$ is a linear combination of elements $ a*(a*\cdot  (a* b)\cdots )$. Notice that for $0<k<p$ we have $ka*b\neq -p$, as otherwise by multyplying several times from the left by $a$ we get $ka*b=0$ which is impossible since $k$ is not divisible by $p$.
Let  $S_{a}=\{r\in A: a \circ r=a+r\}=\{r\in A: a * r=0\}$. 
 Notice that $S_{a}$ is a subgroup of the additive group of $A$, so it has $p^{m}$ elements by the Lagrange theorem. 
Indeed if $x,y\in S_{a}$ then $a*x=0$ and $a*y=0$ hence $a*(x-y)=0$ hence $x-y\in S_{a}$. It suffices to show that $m\geq  {\frac n{p}}$. 
By applying Corollary \ref{Cor17} (ii)  to automorphisms $\lambda _{a}$ of additive groups of braces we obtain that $n\leq mp$ (since $\lambda _{a}^{p}$ is the identity map). It follows since additive groups of braces are abelian and hence powerful and since $S_{a}=\{r\in A: \lambda _{a}(r)=r\}$ is the set of fixed points of the automorphism $\lambda _{a}:(A,+)\rightarrow (A,+)$, where $\lambda _{a}(b)=a*b+b=a\circ b-a$.
\end{proof}

 Observe that Proposition \ref{16} gives a lower bound on the cardinality of the set $Fix(a)=\{b\in A: \lambda (a)(b)=0\}=\{b\in A: a*b=0\}$. Recall that  $Fix(a)$ was introduced in \cite{J}, and in \cite{Jespers}  it was used to investigate the length and the weight of braces and skew braces.

\section{On braces whose adjoint groups are powerful}\label{11}

Let $(A, +, \circ)$ be a brace. The group $(A, \circ )$ is called the multiplicative group of $A$; it is also called the adjoint group of $A$.

Jespers, Cedo, Okni{\' n}ski and Del Rio \cite{Rio} asked which groups are adjoint groups of braces.
We study this question and relate it to the notion of powerful $p$-groups.
It is natural to ask:

\begin{question}\label{22222}
Let $A$ be a finite brace whose adjoint group is powerful. Does it follow that $A$ a strongly nilpotent brace?
\end{question}

In this section we show that the answer to this question is positive for braces of cardinality $p^{n}$ with $p>n+1$.

\begin{proposition}\label{nilpotent}
Let $A$ be a brace of cardinality $p^{n}$ for some prime number $p$ and some natural number $n<p-1$. Then $pA=\{pa:a\in A\}$ is a subbrace in $A$ whose adjoint group is powerful. Suppose that the adjoint group of the brace $A$ is powerful. Then $A$ is a strongly nilpotent brace.
\end{proposition}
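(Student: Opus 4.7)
The plan is to handle the two assertions of Proposition \ref{nilpotent} separately. For the assertion that $pA$ is a subbrace whose adjoint group is powerful, I would invoke Lemma \ref{citepassage} to note that $pA$ is an ideal in $A$ (hence a subbrace) and, applied to the brace $pA$ itself (of cardinality $p^{m}$ with $m\leq n<p-1$), that $(pA)^{\circ p}=p(pA)=p^{2}A$. Since $p>n+1\geq 2$ forces $p$ odd, powerfulness reduces to showing that the commutator subgroup of $(pA,\circ)$ lies in $p^{2}A$. For $x=pa$ and $y=pb$ in $pA$, left-linearity of $*$ in the second slot gives $x*y=x*(pb)=p(x*b)$; since $pA$ is an ideal and $x\in pA$, one has $x*b\in pA$, so $x*y\in p\cdot pA=p^{2}A$, and symmetrically $y*x\in p^{2}A$. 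Hence $x\circ y - y\circ x = x*y - y*x\in p^{2}A$, so in the quotient brace $A/p^{2}A$ the elements $x\circ y$ and $y\circ x$ coincide, forcing the group commutator of $x$ and $y$ in $(A,\circ)$ into $p^{2}A=(pA)^{\circ p}$.

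For the second assertion, that powerful $(A,\circ)$ implies $A$ is strongly nilpotent, I use that braces of $p$-power order are left nilpotent by Rump's theorem and that, by \cite{Engel}, strong nilpotency is equivalent to being simultaneously left and right nilpotent; thus the task reduces to showing $A^{(k)}=0$ for some $k$. I would proceed by induction on $n$: the base cases are trivial, and for the inductive step the first part of the proposition combined with Proposition \ref{b} shows that $pA$ is already strongly nilpotent of index at most $p-1$. The remaining key step is to prove that $A^{(k)}\subseteq pA$ for some $k$, because then a standard absorption argument combined with $(pA)^{[p-1]}=0$ forces $A^{[K]}=0$ for a suitable $K$.

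The main obstacle is precisely this last step. Powerfulness of $(A,\circ)$ yields $(A/pA,\circ)$ elementary abelian and, more usefully, via Lemma \ref{i,j} applied to $(A,\circ)$ together with the identification $A^{\circ p^{i}}=p^{i}A$ of Lemma \ref{citepassage}, gives $a*b - b*a\in p^{k+1}A$ whenever $a\in p^{k}A$ and $b\in A$. This is a commutativity of $*$ modulo $pA$, but as the Jacobson radical ring $\mathbb{F}_{p}[t]/(t^{p-1})$ illustrates, it is too weak to force $a*b\in pA$ directly. The natural route around this is to invoke the pre-Lie ring $(A/ann(p^{2}),+,\bullet)$ of Theorem \ref{dc}, which is powerful by Proposition \ref{describingconstruction}, so its associated Lie ring is nilpotent by Lemma \ref{24}; then Lie-theoretic nilpotency is transferred back to the brace $A$ via the group-of-flows correspondence investigated in the companion paper \cite{newSmok}, carefully managing the interplay between the additive filtration $\{p^{i}A\}$ and the brace-multiplicative filtrations $A^{(i)}$ and $A^{[i]}$.
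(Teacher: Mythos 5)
Your treatment of the first assertion is correct and in fact more explicit than the paper's: the identities $x*(pb)=p(x*b)$ and $x\circ y-y\circ x=x*y-y*x$, together with $(pA)^{\circ p}=p(pA)=p^{2}A$ from Lemma \ref{citepassage}, do place the commutator subgroup of $(pA,\circ)$ inside $(pA)^{\circ p}$. The genuine gap is in the second assertion, and it sits exactly where you flag ``the main obstacle.'' You correctly reduce matters to showing $A^{(k)}\subseteq pA$ for some $k$, and you correctly extract from powerfulness that $a*b-b*a\in pA$ for all $a,b\in A$, i.e.\ that the factor brace $A/pA$ is commutative. But you then abandon this route on the grounds that commutativity ``is too weak to force $a*b\in pA$ directly.'' That objection misses the point: nobody needs $a*b\in pA$ in one step. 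A commutative brace is two-sided, hence a Jacobson radical ring, and a \emph{finite} Jacobson radical ring is nilpotent; applied to $A/pA$ this gives $(A/pA)^{(j)}=0$, i.e.\ $A^{(j)}\subseteq pA$, for some $j$. Your own example argues for this rather than against it: the radical of $\mathbb{F}_{p}[t]/(t^{p-1})$ is nilpotent. This is precisely the paper's argument. The detour you propose instead --- through the pre-Lie ring $(A/ann(p^{2}),+,\bullet)$ of Theorem \ref{dc}, Proposition \ref{describingconstruction}, Lemma \ref{24} and the group-of-flows transfer of \cite{newSmok} --- is not carried out, would at best control $A/ann(p^{2})$ rather than $A$ itself, and leans on a passage from pre-Lie rings back to braces that this paper only establishes for braces already known to be strongly nilpotent, so it cannot be accepted as a proof.

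There is a second, smaller gap in the ``standard absorption argument.'' From $A^{(j)}\subseteq pA$ one cannot deduce $A^{(2j)}\subseteq p^{2}A$ merely from Proposition \ref{b} and $(pA)^{[p-1]}=0$: the generators of $A^{(j+1)}$ are $a*c$ with $a\in A^{(j)}\subseteq pA$ and $c\in A$, and since $*$ is not additive in its left argument one may not write $(pb)*c=p(b*c)$; counting ``factors from $pA$'' in iterated products $((a*c_{1})*c_{2})*\cdots$ never gets past one. The paper's descent $A^{(mj)}\subseteq p^{m}A$ instead writes each $a\in p^{m}A^{(k)}$ as a $\circ$-product of $p^{m}$-th powers $b_{i}^{\circ p^{m}}$ of elements $b_{i}\in A^{(k)}$ (Lemma \ref{citepassage} applied to the subbrace $A^{(k)}$) and expands $\lambda_{b_{i}^{\circ p^{m}}}(c)$ via Lemma \ref{citenote}, using that $\binom{p^{m}}{j}$ is divisible by $p^{m}$ for $0<j<p$. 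Some such device is needed to finish; with it, right nilpotency follows, and combined with Rump's left nilpotency and \cite{Engel} one gets strong nilpotency as you indicate.
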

\begin{proof}
By a result from \cite{passage} we have $A^{\circ p^{i}}=p^{i}A$ for every $i$. Let $a\in A$, then
\[p^{i}a=a_{1}^{\circ p^{i}}\circ \cdots \circ a_{j}^{\circ p^{i}},\]
for some $j$ and some $a_{1}, \ldots , a_{j}\in A$.
 Observe that since $(A, \circ )$ is a  powerful group then
\[ a^{-1}\circ b^{-1}\circ a \circ b\in A^{\circ p}=pA,\]
for all $a,b\in A$, where $a^{-1}, b^{-1}$ are the inverses of $a$ and $b$ in the group $(A, \circ )$.
Therefore $a\circ b=b\circ a \circ c$ for some $c\in pA$. Consequently
$a*b=b*a+c+ (b*a)*c+b*c+a*c\in b*a+pA$.
 It follows that the factor brace
$A/pA$ is commutative (recall that $pA$ is an ideal in the brace $A$ by \cite{passage}).

It is known that commutative braces are Jacobson radical rings, and since the brace $A/pA$ is finite it follows that $A/pA$ is a nilpotent ring (where $*$ is the multiplication in the ring, where $a \circ b=a*b+a+b$ in the brace).
% Therefore $(A/pA)^{(j)}=0$ for some $j$.
Consequently, $A^{(j)}\in pA$ for some $j$.

We will now show, by induction on $m$, that
\[A^{(mj)}\subseteq p^{m}A,\]
 for every $m$.  Notice that $p^{n}A=0$, so this implies $A^{(nj)}=0$.
The result is true for $m=1$.

Suppose that the result is true for some $m$, so $A^{(mj)}\subseteq p^{m}A$. We need to show that \[A^{((m+1)j)}\subseteq p^{m+1}A.\]
We will show, by induction on $k \ge 1$, that
$A^{(mj+k-1)}\subseteq p^{m}A^{(k)}$ for all $k$. Recall that $A^{(k)}$ is the additive subgroup of $A$ generated by elements $a*c$ where $a\in A^{(k-1)}$ and $c\in A$. For $k=1$ the result is true since  $A^{(mj)}\subseteq p^{m}A$ by the above, and since $A^{(1)}=A$.

Suppose that this result is true for some $k\geq 1$ so $A^{(mj+k-1)}\subseteq p^{m}A^{(k)}$. We need to show that
\[A^{(mj+k)}\subseteq p^{m}A^{(k+1)}.\]
Let $a\in A^{(mj+k)}$. Then $a=\sum_{i=1}^{t}a_{i}*c_{i}$ for some $a_{i}\in A^{(mj+k-1)}$ and $c_{i}\in A$.
By the inductive assumption on $k$ we have \[a_{i}\in p^{m}A^{(k)}.\]
 
By Lemma  $15$ \cite{passage}, applied for the brace ${\bar A}=A^{(k)}$, we have $p^{m}{\bar A}={\bar A}^{\circ p^{m}}$, hence  \[a_{i}=b_{1}^{\circ p^{m}}\circ \cdots \circ b_{l}^{\circ p^{m}}\] for some natural number $l$ and some $b_{1}, \ldots , b_{l}\in {\bar A}=A^{(k)}$.
Observe that \[a_{i}*c_{i}=\lambda _{a_{i}}(c_{i})-c_{i}=\lambda _{b_{1}^{\circ p^{m}}\circ \cdots \circ b_{l}^{\circ p^{m}}}(c_{i})-c_{i}=\]
\[=\lambda _{b_{1}^{\circ p^{m}}}(\lambda_{{ b_{2}}^{\circ p^{m}}}(\cdots  \lambda _{b_{l}^{\circ p^{m}}}(c_{i})\cdots ))-c_{i}.\]

By Lemma $15$ \cite{passage}  and because $A^{p-1}=0$ we have
$\lambda _{{b_{1}^{\circ p^{m}}}}(c_{i})=(\sum_{j=1}^{p-1} {p^{m} \choose j} e_{j})+c_{i}$, where
$e_{1}=b_{1}*c_{i}$ and $e_{j+1}=b_{1}*e_{j}$. Notice that for a positive integer $j<p$ we have that ${p^{m} \choose j}$ is divisible by $p^{m}$. Moreover $e_{j+1}\in b_{1}*e_{j}\subseteq b_{1}*A\subseteq A^{(k)}* A\subseteq A^{(k+1)}$. The same argument works if we take $b_{2}, \cdots , b_{l}$ instead of $b_{1}$.
Therefore, $a_{i}*c_{i}\in p^{m}A^{(k+1)}$. This proves the inductive hypothesis.
Notice that this implies \[A^{(mj+j)}\subseteq p^{m}A^{(j)}.\] Recall that $A^{(j)}\subseteq pA$, therefore $A^{(mj+j)}\subseteq p^{m+1}A$. This establishes the inductive step, and completes the proof.
\end{proof}

\section{On powerful  Lie rings with left nilpotent pre-Lie
rings}\label{12}

Let $L$ be a Lie ring. We will denote the binary operations in $L$ by $+$ and $[.,. ]$.
 Let $A$ be a pre-Lie ring. We say that $A$ is a pre-Lie ring of $L$ if $A=L$ as sets, $A$  
 and $L$ have the same addition, and $[a,b]=a\cdot b-b\cdot a$ is the Lie product in $L$.

Recall that a Lie ring $L$ whose size is a $p$th power is powerful if and only if
for every $a,b\in L$,  either $p>2$ and $[a,b]\in pL$, or $p=2$ and $[a,b]\in 4 L$.

\begin{proposition}
Let $L$ be a powerful Lie ring of cardinality $p^{n}$ for some prime number $p$, and some natural number $n$. Let $(A, +, \cdot)$ be a pre-Lie ring of the ring $L$ which is left nilpotent.
Then $A$ is a right nilpotent pre-Lie ring and consequently $A$ is a (strongly) nilpotent pre-Lie ring.
\end{proposition}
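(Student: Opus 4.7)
The plan is to prove right nilpotency by an explicit filtration-induction, from which strong nilpotency follows as a standard corollary. First I would exploit the powerful hypothesis $L^{2}\subseteq pL$, which gives $a\cdot b-b\cdot a\in pA$ for all $a,b\in A$; hence the pre-Lie product is commutative on $\bar A:=A/pA$. A short calculation shows any commutative pre-Lie ring is in fact associative: commutativity collapses the pre-Lie identity $(a\cdot b)\cdot c-a\cdot(b\cdot c)=(b\cdot a)\cdot c-b\cdot(a\cdot c)$ to $a\cdot(b\cdot c)=b\cdot(a\cdot c)$, and combining this with commutativity (rewriting $(a\cdot b)\cdot c=c\cdot(b\cdot a)$ and applying the identity with permuted entries) yields $(a\cdot b)\cdot c=a\cdot(b\cdot c)$. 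So $\bar A$ is commutative associative, and since it inherits left nilpotency of some class $N$ from $A$, it is nilpotent of class $N$. In particular every right-bracketed product of $N$ elements of $A$ lies in $pA$, i.e.\ $A^{(N)}\subseteq pA$.

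The main inductive claim is $A^{((N-1)k+1)}\subseteq p^{k}A$ for every $k\geq 0$. The base $k=0$ is trivial. For the step, fix a right-bracketed product $r(a_{1},\ldots,a_{M})$ with $M=(N-1)(k+1)+1$, set $K=(N-1)k+1$, and define $b_{0}:=r(a_{1},\ldots,a_{K})$, which lies in $p^{k}A$ by the inductive hypothesis. Iteratively set $b_{j}:=b_{j-1}\cdot a_{K+j}$, so $b_{N-1}$ is the original product. At each step convert a right multiplication into a left one modulo a commutator, $b_{j-1}\cdot a_{K+j}=a_{K+j}\cdot b_{j-1}+[b_{j-1},a_{K+j}]$, noting that $[b_{j-1},a_{K+j}]\in[p^{k}A,A]\subseteq p^{k}[A,A]\subseteq p^{k+1}A$ by powerfulness. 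Since $L_{a}$ preserves $p^{k+1}A$, the accumulated errors stay in $p^{k+1}A$, and after $N-1$ swaps we obtain $b_{N-1}\equiv a_{K+N-1}\cdot(a_{K+N-2}\cdot(\cdots(a_{K+1}\cdot b_{0})\cdots))\pmod{p^{k+1}A}$.

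Writing $b_{0}=p^{k}c$ for some $c\in A$, the main term equals $p^{k}\cdot a_{K+N-1}\cdot(a_{K+N-2}\cdot(\cdots(a_{K+1}\cdot c)\cdots))$, a scaled left-bracketed product of $N$ factors, which vanishes by left nilpotency of class $N$. Hence $A^{((N-1)(k+1)+1)}\subseteq p^{k+1}A$, completing the induction. Setting $k=n$ and using $p^{n}A=0$ gives $A^{((N-1)n+1)}=0$: $A$ is right nilpotent. Strong nilpotency then follows from the known result that right nilpotent pre-Lie rings are strongly nilpotent (a theorem of Dzhumadil'daev), or equivalently by running an analogous induction over arbitrary binary-tree products.

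The hardest part of the plan is the inductive bookkeeping: one must verify that each commutator swap deposits an error of the correct $p$-weight, and that the subsequent left multiplications do not lower this weight. The choice $K=(N-1)k+1$ is engineered precisely so that the remaining $N-1$ multiplications create a left-bracketed word of $N$ factors once $p^{k}$ is pulled out of $b_{0}$. Beyond the pre-Lie identity and the routine closure properties of $p^{k}A$ under multiplication by $A$, no further structural input is required.
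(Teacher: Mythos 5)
Your proof is correct and follows essentially the same route as the paper's: powerfulness makes $A/pA$ a commutative (hence left--right symmetric) pre-Lie ring, and the filtration $p^{k}A$ together with $p^{n}A=0$ lifts right nilpotency back to $A$; your explicit commutator bookkeeping is just a more careful rendering of the paper's ``pull out a factor of $p$ at each stage'' step (and it quietly fixes a bracket-direction slip in the paper's own display). One small caution: for the final passage to strong nilpotency, invoke the result that a pre-Lie ring which is both left and right nilpotent is strongly nilpotent (as the paper does), rather than the stronger claim that right nilpotency alone suffices --- you have both hypotheses available, so nothing is lost.
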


\begin{proof} Notice that $p^{n}a=0$ for $a\in A$, since the additive group $(A, +)$ has cardinality $p^{n}$.
Observe first that a left nilpotent pre-Lie ring which is commutative is also right nilpotent; this follows since the commutativity of $\cdot $ implies
\[(\cdots ((a_{1}\cdot a_{2})\cdot a_{3})\cdots a_{i-1})\cdot a_{i}=a_{i}\cdot(a_{i-1} \cdots (a_{3}\cdot(a_{2}\cdot a_{1}))\cdots ).\]
Note also that $pA=\{pa:a\in A\}$ is an ideal of $A$, because the multiplication in $A$ is distributive with respect to the addition.

The factor pre-Lie ring $A/pA$ is a commutative, since $a\cdot b-b\cdot a=[a,b]\in pL=pA$, since $L$ is a powerful Lie ring.
Therefore $A/pA$ is a right nilpotent pre-Lie ring, since it is left nilpotent by the assumptions.

Therefore, for some $i$, and for all $a_{1}, \ldots , a_{i}\in A$,
\[a_{i}\cdot(a_{i-1} \cdots (a_{3}\cdot(a_{2}\cdot a_{1}))\cdots )\in pA.\]
Denote $pc= a_{i}\cdot(a_{i-1} \cdots (a_{3}\cdot(a_{2}\cdot a_{1}))\cdots )$; then \[a_{2i}\cdot(a_{2i-1} \cdots (a_{3}\cdot(a_{2}\cdot a_{1}))\cdots )=a_{2i}\cdot(a_{2i-1} \cdots (a_{i+3}\cdot(a_{i+2}\cdot (a_{i+1} \cdot pc)))\cdots )=\]
\[=p a_{2i}\cdot(a_{2i-1} \cdots (a_{i+3}\cdot(a_{i+2}\cdot (a_{i+1} \cdot c)))\cdots )\in p^{2}A.\]

Continuing in this way we obtain that $a_{ni}\cdot (a_{ni-1} \cdots (a_{3}\cdot(a_{2}\cdot a_{1}))\cdots )\in p^{n}A=0,$
for all $a_{1}, \ldots , a_{ni}$. Therefore, $A$ is a right nilpotent pre-Lie ring.
By a result from \cite{passage}, a pre-Lie ring which is left nilpotent and right nilpotent is nilpotent, so for some $m$,  products of any $m$ elements in the pre-Lie ring $A$, with any distribution of brackets, are zero.
\end{proof}

 The following question may be related to the results obtained in this section:

%\begin{question}
% Describe all braces of cardinality $p^{5}$ and $p^{6}$, where $p$ is a prime number, % whose multiplicative groups are %powerful.
% \end{question}

\begin{question} Is every finite powerful $p$-group the adjoint group of a brace?
\end{question}

Observe that, for $p>(n+1)^{n+1}$, this question can be approached by investigating powerful pre-Lie rings, following the approach described on page 141 in \cite{Rump}, where Wolfgang Rump suggested  to use Lazard's correspondence to investigate such questions. Here, we are able to add rigorous proofs to apply this method.
 Let $(A, +, \cdot )$ be a pre-Lie ring. Define $[a,b]=a\cdot  b-b\cdot a$, for $a,b\in A$. We say that $(A, +, \cdots)$ is a powerful pre-Lie ring if and only if  $(A, +, [. , . ])$ is a powerful Lie ring.

 Our next result is the following:
\begin{theorem}\label{123}
 Let $p>0$ be a prime number and $p > (n+1)^{n+1} +1$ be a natural number.
  There is one-to-one correspondence between braces of cardinality $p^{n}$ whose adjoint group is powerful and powerful  pre-Lie rings which are left nilpotent  of the same cardinality  (and with the same additive group as the additive group in the corresponding brace). 
\end{theorem}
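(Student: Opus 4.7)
The plan is to leverage the bijective correspondence established in \cite{passage, Lazard} between strongly nilpotent braces of nilpotency index $k<p$ and strongly nilpotent pre-Lie rings of the same nilpotency index and cardinality, realized by the group of flows construction and its inverse. The goal is to restrict this correspondence to a bijection between braces of cardinality $p^{n}$ whose adjoint group is powerful on the one side, and powerful left nilpotent pre-Lie rings of the same cardinality on the other. The hypothesis $p>(n+1)^{n+1}+1$ is calibrated precisely to guarantee that both sides have strong nilpotency index less than $p$, so that the correspondence applies.

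For the forward direction, let $(A,+,\circ)$ be a brace of cardinality $p^{n}$ whose adjoint group is powerful. Since $p>n+1$, Proposition \ref{nilpotent} applies and shows that $A$ is strongly nilpotent. I would then extract from its proof a quantitative bound: the argument yields $A^{(nj)}=0$, where $j$ is the nilpotency index of the finite commutative nilpotent ring $A/pA$, which is at most $n+1$, so the right nilpotency index is at most $n(n+1)$; combined with the standard bound on left nilpotency and a result from \cite{Engel} this gives a bound on the strong nilpotency index that is polynomial in $n$ and in particular smaller than $(n+1)^{n+1}<p$. With the nilpotency index controlled, I would invoke Proposition \ref{345} to obtain a powerful pre-Lie ring $(A,+,\cdot)$ on the same underlying additive group, which is automatically left nilpotent.

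For the backward direction, let $(A,+,\cdot)$ be a powerful left nilpotent pre-Lie ring of cardinality $p^{n}$. By the proposition immediately preceding Theorem \ref{123}, $A$ is also right nilpotent, and hence strongly nilpotent, with strong nilpotency index likewise bounded by a function of $n$ strictly smaller than $p$. The group of flows construction then applies and produces a brace $(A,+,\circ)$ on the same additive group. The key observation is that the multiplicative group $(A,\circ)$ is isomorphic to the group obtained from the associated Lie ring $(A,+,[\cdot,\cdot])$, with $[a,b]=a\cdot b-b\cdot a$, via the Lazard correspondence; since this Lie ring is powerful by the definition of a powerful pre-Lie ring, the classical fact that the Lazard correspondence matches powerful Lie rings with powerful $p$-groups (cf.\ \cite{DDMS}) shows that $(A,\circ)$ is a powerful $p$-group.

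Finally, I would verify bijectivity by observing that the two constructions are restrictions of the known mutually inverse passages between strongly nilpotent braces and pre-Lie rings of nilpotency index less than $p$ from \cite{passage, Lazard}; hence their composition in either order is the identity, and the additive group is preserved throughout. The main obstacle is the quantitative control of the strong nilpotency index by a function of $n$ smaller than $p$, which is what forces the large assumption $p>(n+1)^{n+1}+1$; once this bound is in hand, matching the powerful property on both sides is essentially automatic from Proposition \ref{345} in one direction and the Lazard dictionary for powerful objects in the other.
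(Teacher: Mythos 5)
Your proposal follows essentially the same route as the paper's proof: Proposition \ref{nilpotent} to get strong nilpotency of the brace, control of the nilpotency index so that it is less than $p-1$, Proposition \ref{345} (via Theorem 6 of \cite{passage2}) for the forward direction, and the group of flows together with the fact that the Lazard correspondence matches powerful Lie rings with powerful $p$-groups for the backward direction, with bijectivity coming from the known mutual inverse property. The one point to flag is your claim that the strong nilpotency index is ``polynomial in $n$'': the passage from right nilpotency (index at most $n(n+1)$) plus left nilpotency to strong nilpotency via \cite{Engel} does not yield a polynomial bound, and the paper instead cites Corollary 19 of \cite{passage2}, which gives index $k<p-1$ precisely under the hypothesis $p-1>(n+1)^{n+1}$ --- this is exactly why the theorem carries such a strong assumption on $p$, and your weaker conclusion (index below $(n+1)^{n+1}$) is the one that is actually available and suffices.
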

\begin{proof} 
Let $(A, +, \circ )$ be a brace of cardinality $p^{n}$ with $p-1>(n+1)^{n+1}$. By Proposition \ref{nilpotent}, $A$ is a strongly nilpotent brace. Next, by Corollary 19 from \cite{passage2}, $A$ is strongly nilpotent of nilpotency index $k<p-1$. By Theorem 6 from  \cite{passage2} the formula \[a\cdot b=-(1+p+\ldots +p^{p})\sum_{i=0}^{p-1}\xi ^{p-1-i}((\xi ^{i}a)*b)\] produces a pre-Lie ring $(A, +, \cdot )$ from the brace $A$.

By Proposition \ref{345} this pre-Lie ring is powerful. The brace $(A, +, \circ )$ can be then obtained  by applying the construction of the group of flows to the pre-Lie ring $(A, +, \cdot)$ (see \cite{passage2}). Therefore, every brace of cardinality $p^{n}$, with $p>(n+1)^{n+1}$ and whose multiplicative group is powerful, is obtained by applying the construction of group of flows to some powerful pre-Lie algebra (which is also strongly nilpotent).

Observe on the other hand that if $(A, +, \bullet )$ is an arbitrary pre-Lie ring of cardinality $p^{n}$ then the construction of the group of flows is well defined by Lemma 9 and Theorem 10 \cite{passage}. Observe that by applying the group of flows construction to 
  the pre-Lie ring $(A, +, \bullet)$ we obtain a powerful group $G$. This follows because the group obtained from the Lie ring $(A, +, [. , .])$,  where $[a,b]=a\bullet b-b\bullet a$, by using the Lazard correspondence, is powerful (it follows from lines 7-9 on page 797 in \cite{TW}). Observe that the group  
$G$ is isomorphic to the group obtained from the pre-Lie ring $(A, +, \bullet )$ by using the 
construction of the group of flows (for more information see for example \cite{AG}, or in the context of braces see the Remark at the end of section 4 in \cite{Lazard}). Therefore, the brace obtained from pre-Lie ring $(A, +, \bullet )$, by using the construction of the group of flows has the multiplicative which is powerful group.

Reasoning as in Theorem 14 \cite{passage} we obtain that we can pair powerful left nilpotent pre-Lie rings and braces whose adjoint groups are powerful with pairs and every brace and every pre-Lie ring are exactly in one pair. This concludes the proof. 
\end{proof}

{\em Remark.} Observe that the Lie ring $(A,+, [.,.])$ obtained in Proposition \ref{345} is isomorphic to the Lie ring that resulted from applying the Lazard correspondence to the multiplicative group of the brace.  Therefore, the correspondence obtained in Theorem \ref{123}  expands upon the Lazard correspondence in the case of powerful groups.
Nilpotency of braces was investigated for example in \cite{ccs, co, col, gateva, Nilpotency, J, Jespers, pilitowska, Dora, rump, Engel}. 

 Let $A$ be a brace of cardinality $p^{n}$ for some prime $p$ and some $n$. We say that $A$ is a powerfu brace if $a\circ b-b\circ a\in pA$, for all $a,b\in A$.
 We don't know if adjoint groups of powerful braces are always powerful.
\section{Conclusions and open questions}\label{13}

   It is known that graded Lie algebras are normed (we recall the definition of normed algebras later in this section). This brings 
  several open questions about left normed pre-Lie algebras.

 \begin{question} 
  If $P$ is a left nilpotent  pre-Lie algebra whose associated  
  Lie algebra is powerful, does it follow that  $P$ is  left normed?  
 \end{question}

%In the context of braces the following more restricted question is also relevant:
%
% \begin{question} 
%  If $P$ is a finite left nilpotent  pre-Lie algebra whose associated  
%  Lie algebra is powerful, does it follow that  $P$ is  left normed?  
% \end{question}
%
 %In this section we give some thoughts related to the following question:
%
%\begin{question}
% Give a bound of a  number of graded left normed  algebras  of given cardinality.
%\end{question}

  In this section we focus not only on graded Lie algebras $L$ associated with 
 nilpotent groups, but on a larger class of objects, namely on general
 algebras $A$, where the product need not be associative, and need not satisfy 
 the Jacobi identity or any other identity.

 By a graded algebra we mean an algebra graded over the positive integers. Clearly, graded     
 algebras don't contain a unit element $1$.

Consider a finite-dimensional graded algebra $(A,+, \cdot , 0)$ over a field $F$.
Then $A =  A_1 \oplus  \ldots \oplus A_c$ for some $c \ge 1$ and $A_i \cdot A_j \subseteq  A_{i+j}$ for all $i,j \ge 1$, where we define $A_k = 0$ for all $k > c$.

We shall study below the important case where the graded algebra $A$ is generated by $A_1$. Notice that in a strongly nilpotent brace  the multiplications of generators of the additive group  will contain all  of the information about the multiplicative group of the brace. This  gives an upper bound of the number of such braces. See \cite{leandronew} for interesting results on ennumerating braces and $L$-algebras.

Define $d_i = \dim A_i$ $(i = 1, \ldots , c)$, and  
for $1 \le i \le j \le c-1$ define the product functions
\[
f_{ij}:A_i \times A_j \to A_{i+j}, \; f(x,y) = x \cdot y.
\]
These functions are bi-linear over $F$ and determine the product in $A$.
Note also that the number of bi-linear functions $f_{ij}$ is 
bounded above by $|A_{i+j}|^{d_i \cdot  d_j}$ provided $i+j \le c$, and is $1$ otherwise (since
$f_{ij}$ is the zero function in this case).

For a positive integer $m$ denote by $A^{ \{ m \} }$ the set of all products of elements $a_1, \ldots , a_m \in A$ in all possible brackets arrangements. We say that $A$ is {\em strongly nilpotent} if $A^{ \{ m \} } = 0$ for some $m \ge 1$.

 Clearly, graded algebras $A$ as above (generated by $A_1$) are strongly nilpotent. Indeed, since products are bi-linear, to show
that $A^{ \{ m \} } = 0$ it suffices to show that products of homogeneous elements $a_1, \ldots , a_m$ in all brackets arrangements vanish.

%Suppose $a_j \in A_{i_j}$ ($i = 1, \ldots , c$). Then (in all
%possible brackets arrangements) $a_1 \cdots a_m$ belongs to $A_{i_1 + \ldots + i_m} = 0$
%provided $i_1 + \ldots + i_m > c$. We conclude that, for $A$ as above, $A^{ \{ c+1 \} }=0$.

For elements $a_1, \ldots , a_m \in A$ let $a_1 \cdot a_2 \cdots a_m$
denote the left-normed product defined inductively by 
\[
a_1 \cdots a_m := a_1 \cdot ( a_2 \cdots a_m ). 
\]
These are analogs of left-normed 
commutators in groups and of left-normed Lie products in Lie algebras.

  An algebra as above (i.e., graded and generated as an $\mathbb F$-algebra  by $A_{1}$) is {\em left normed} if $A_{i}$ is spanned by left normed products. 
 It is known that Lie algebras are left normed, see the proof of the next Proposition for details.
\begin{remark}\label{left-normed}
Let $A$ be an arbitrary not necessarily associative algebra of finite dimension
over a field $F$. If $A$ is left normed then each $A_i$ is spanned by the left-normed products 
$a_{j_1} \cdots a_{j_i}$ where $a_{j_k} \in A_1$ for all $k = 1, \ldots , i$.
\end{remark}
\begin{proof}
Let $L$ be a Lie algebra. It is well-known that, for all $k \ge 1$, $L^k$ is generated as a linear space by all left-normed Lie products of weight $k$ in $L$. 
 It  follows that  if, for a group $G$, $L = L(G) = \oplus_{i=1}^c L_i$, then $L^k$ is spanned by left-normed Lie products $[x_1, \ldots , x_k]$ where $x_1, \ldots , x_k \in L_1$. 

The proof of this result uses only the fact that $L^k$ is spanned by 
left-normed Lie products of elements of $L_1$. Since, by the assumptions of Remark \ref{left-normed}, the algebra $A$ is generated by $A_1$, the result follows.
\end{proof}

%{\bf Definition.} Let $m,n$ be natural numbers. Let $S_{n,m}$ be a set of some all possible left normed algebras 
%$A = \oplus_{i \ge 1} A_i$ of  dimension $n$ over a field $\mathbb F$, with $A_{1}$ of dimension $m$. For each such algebra %there is a way in which its not left normed products can be written as linear combinations of  left normed elements. For each %algebra $A\in S_{n,m}$ we choose  one such way, and put it (i.e., this way of presenting elements as linear combinations of left %normed elements) as an element of a set which we call  $T_{n,m}$. Let $D_{m,n}$ be the cardinality of  the set $T_{n,m}$.
% It is natural to ask:
%
%$ $
%
% 
%\begin{question}
% Consider the class of graded pre-Lie algebras which are left normed and whose Lie algebras are powerful.
% Is there a good bound for  $D_{m,n}$ for all (or some)  $m,n$?
%\end{question}
%
%$ $
%
 Recall that previously we defined $A = \oplus_{i \ge 1} A_i$,  $d_i = \dim A_i$ $(i = 1, \ldots , c)$, and  
for $1 \le i \le j \le c-1$ define the product functions
\[
f_{ij}:A_i \times A_j \to A_{i+j}, \; f(x,y) = x \cdot y.
\]
 For $i = 1, \ldots , c-1$ set $f_i:= f_{1i}$.
 Consider the functions the functions $f_i$ $(1 \le i \le c-1)$.  There are at most $|A_{i+1}|^{d_1\cdot d_i}$ such functions
for each $i$. 
% Removed the following sequence as it contained D_{n}. By Corollary $3.2.4$ in \cite{Khukhro} we obtain that, since $A$ is %generated by 
% $A_1$, it is d
%etermined as an algebra by the functions $f_i$ $(1 \le i \le c-1)$ and by an element of set $T_{m,n}$.  
% Hence the number of such algebras $A$ (not just up to isomorphism but also as labelled algebras) is bounded above by 
%%$D_{m,n}$ multiplied by:
%\[
%\prod_{i=1}^{c-1} |A_{i+1}|^{d_1 \cdot d_i}  =
%\prod_{i=1}^{c-1} (|A_{i+1}|^{d_i})^{d_{1}}\]
%Let $n = \dim_F A$. This in turn is bounded above by
%\[(\prod_{i=2}^c |A|^{d_i})^{d_{1}} =
%|A|^{\dim L\cdot d_{1}} = (q^n)^{nd_{1}} = q^{n^2d_{1}}.
%\]
 % Notice that $D_{m,n}$ will depend of the cardinalities of sets $A_{i}$. So algebras $A$ with different dimensions of sets %$A_{i}$ will be counted separately.

%I removed info about D_{n} and the following resultsWe have proved part  the following result.

%{\bf I have to remove part of Theorem $2$ as $D_{n,m}$ complicated matters for lower bounds}
%\begin{theorem}\label{Lie-count} 
%
%(i) The number of labelled graded finite not necessarily associative  left normed algebras 
%$A = \oplus_{i \ge 1} A_i$ of  dimension $n$ over a field $\mathbb F$,  
%which are generated by $A_1$ is bounded above by $|A|^{\dim_F A \cdot m} \cdot D_{n,m} = a^{nm} \cdot D_{n,m}$, %where $m$ is the dimension of $A_{1}$, and $a$ is the cardinality of $A$.
%
%(ii) There are at most $p^{n^2\cdot m}\cdot D_{n, m}$ labelled graded pre Lie-rings   
%$A$ of dimension $n$ over $\F_p$ which are generated by $A_1$, where $m$ is the dimension of $A_{1}$.
%
%\end{theorem}

\medskip

 Next we recall classical results enumerating finite $p$-groups. 
We would like to comparte it with the namber of graded left normed algebras of the same cardinality. 

For large $n$, the number of groups $G$ of order $p^n$ \cite{Sims} is much smaller than $p^{(2/27+O(n^{-1/3})) n^3}$, which is Sims' upper bound for the number of groups $G$ of order $p^n$ \cite{Sims}. In Higman's paper \cite{H2} it is shown that the number of groups of order $p^n$ is at least $p^{(2/27 - o_n(1))) n^3}$.

To make this more concrete, let us restrict to groups of order $p^n$, where $n$ is sufficiently large.
A finite non-abelian $p$-group $G$ is said to be of $\Phi$-{\em class two} if it has an elementary abelian central subgroup $N$ such that $G/N$ is also elementary abelian. In particular, such groups have nilpotency class two.
A result of Higman \cite[Thereom 2.3]{H2} shows that the number of isomorphism types of $\Phi$-class two groups of order $p^n$ where $n > 1$ is at least $p^{{2 \over 27}(n^3 - 6n^2)}$. 
%Removed the following proposition, as it just repeats the previous infoThis immediately implies the following.
%
%\begin{proposition}\label{H2}
%
%The number of isomorphism types of groups of $\Phi$-class two and order $p^n$ ($n>1$) is bounded below by $p^{(2/27 - %o_n(1))n^3}$.
%\end{proposition}

\begin{question}
 Is the number of   isomorphism types of groups of $\Phi$-class two and order $p^n$ ($n>1$) larger than the number of graded Lie rings of nilpotency class two?
\end{question}
\vspace{5mm}

{\bf Acknowledgements.} The first author acknowledges support by the ISF grant 700/21, the BSF grant 2020/037 and the Vinik Chair
of mathematics which he holds.
The second author acknowledges support from the EPSRC programme grant EP/R034826/1 and from the EPSRC research grant EP/V008129/1.
Both authors are grateful to Leandro Vendramin, Wolfgang Rump for  their useful comments. We are deeply saddened by the recent passing of Avinoam Mann. We would like to express deep admiration for his wonderful results. We are also very grateful for his helpful comments to the first version of this paper.
\vspace{15mm}

\end{document}